\numberwithin{equation}{section}
\newtheorem{em-deff}{Definition}[section]
\newtheorem{lem}[em-deff]{Lemma}
\newtheorem{theo}[em-deff]{Theorem}
\newtheorem{cor}[em-deff]{Corollary}
\newtheorem{prop}[em-deff]{Proposition}
\newtheorem{em-example}[em-deff]{Example}
\newtheorem{claim}[em-deff]{Claim}
\newtheorem{em-remark}[em-deff]{Remark}
\newtheorem{question}[em-deff]{Question}
\newenvironment{exmp}{\begin{em-example} \em }{ \end{em-example}}
\newenvironment{rem}{\begin{em-remark} \em }{ \end{em-remark}}
\newenvironment{defn}{\begin{em-deff} \em }{ \end{em-deff}}
\newcommand{\N}{\mathbb N}
\newcommand{\R}{\mathbb R}
\newcommand{\Z}{\mathbb Z}
\title{Arnautov's problems on semitopological isomorphisms}
\author{Dikran Dikranjan \and Anna Giordano Bruno}
\date{}
\begin{document}

\maketitle

\abstract{
Semitopological isomorphisms of topological groups were introduced by Arnautov \cite{Ar}, who posed several questions related to compositions of semitopological isomorphisms and the groups $G$ (we call them Arnautov groups) such that for every group topology $\tau$ on $G$ every semitopological isomorphism with domain $(G,\tau)$ is necessarily open (i.e., a topological isomorphism). We propose a different approach to these problems by introducing appropriate new notions, necessary for a deeper understanding of Arnautov groups. This allows us to find some partial answers and many examples. In particular, we discuss the relation with minimal groups and non-topologizable groups.
}

%

\section{Introduction}

It is easy to prove that for every continuous isomorphism $f:(G,\tau)\to(H,\sigma)$ of topological  groups, there exist a topological group $(\widetilde G,\widetilde\tau)$ containing $G$ as a topological subgroup and an open continuous homomorphism $\widetilde f:(\widetilde G,\widetilde\tau)\to (H,\sigma)$ extending $f$ \cite[Theorem 1]{Ar} (see also \cite[Theorem 1.1]{AGB} for continuous surjective homomorphisms).

The following notion is motivated by the fact that it is not always possible to prove the existence of such $\widetilde G$ and $\widetilde f$, asking $G$ to be also a {\em normal} subgroup of $\widetilde G$ (see also \cite{Ar0} for topological rings).

\begin{defn}{\cite[Definition 2]{Ar}}
A continuous isomorphism $f:(G,\tau)\to (H,\sigma)$ of topological groups is \emph{semitopological} if there exist a topological group $(\widetilde G,\widetilde\tau)$ containing $G$ as a topological normal subgroup and an open continuous homomorphism $\widetilde f:(\widetilde G,\widetilde\tau)\to (H,\sigma)$ extending $f$.
\end{defn}

In other words semitopological isomorphisms are restrictions of open continuous surjective homomorphisms to normal subgroups.
Obviously the class of semitopological isomorphisms contains the class of topological isomorphisms.

\medskip
Arnautov characterized semitopological isomorphisms \cite[Theorem 4]{Ar}. We give his characterization in terms of commutators and of thin subsets, as done in \cite{AGB}. 

For a neighborhood $U$ of the neutral element $e_G$ of a topological group $G$ call a subset $M$ of $G$ \emph{$U$-thin} if $\bigcap\{x^{-1}Ux:x\in M\}$ is still a neighborhood of $e_G$ (i.e., there exists a neighborhood $U_1$ of $e_G$ in $G$ such that $x U_1 x^{-1}\subseteq U$ for every $x\in M$). The subsets $M$ of $G$ that are $U$-thin for every $U$ are precisely the \emph{thin sets} in the sense of Tkachenko \cite{T,T1}. For example compact sets are thin.

\begin{theo}\label{semitop}\emph{\cite[Theorem 4]{Ar}}
Let $(G,\tau)$ and $(H,\sigma)$ be topological groups. Let $f:(G,\tau)\to(H,\sigma)$ be a continuous isomorphism. Then $f$ is semitopological if and only if for every $U\in\mathcal V _{(G,\tau)}(e_G)$:
\begin{itemize}
\item[(a)]there exists $V\in\mathcal V_{(H,\sigma)}(e_H)$ such that $f^{-1}(V)$ is $U$-thin;
\item[(b)]for every $g\in G$ there exists $V_g\in\mathcal V_{(H,\sigma)}(e_H)$ such that $[g,f^{-1}(V_g)]\subseteq U$.
\end{itemize}
\end{theo}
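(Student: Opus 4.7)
The plan is to prove the two implications separately, leveraging a single algebraic fact. If $f$ is semitopological via $(\widetilde G,\widetilde\tau,\widetilde f)$ and $K:=\ker\widetilde f$, then $K\cap G=\{e\}$ (because $\widetilde f|_G=f$ is injective), $\widetilde G=KG$ (because $\widetilde f(G)=H=\widetilde f(\widetilde G)$), and both subgroups being normal with trivial intersection forces $[K,G]\subseteq K\cap G=\{e\}$. Thus $K$ is central in $\widetilde G$; this is the lever for necessity.

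For necessity, fix $U\in\VG$ and $\widetilde U\in\mathcal V_{\widetilde G}(e)$ with $\widetilde U\cap G=U$. To verify (a), choose an open symmetric $\widetilde U_1\ni e$ with $\widetilde U_1^{3}\subseteq\widetilde U$, so $\tilde x\widetilde U_1\tilde x^{-1}\subseteq\widetilde U$ for every $\tilde x\in\widetilde U_1$; set $V:=\widetilde f(\widetilde U_1)\in\VH$ (open since $\widetilde f$ is open) and $U_1:=G\cap\widetilde U_1$. Any $g\in f^{-1}(V)$ factors as $g=uk$ with $u\in\widetilde U_1$ and $k\in K$, and centrality of $K$ gives $gU_1g^{-1}=uU_1u^{-1}\subseteq\widetilde U\cap G=U$. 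To verify (b), fix $g\in G$ and use continuity of $\tilde y\mapsto[g,\tilde y]$ at $e$ in $\widetilde G$ to pick an open $\widetilde W_g\ni e$ with $[g,\widetilde W_g]\subseteq\widetilde U$; set $V_g:=\widetilde f(\widetilde W_g)\in\VH$. For $y=wk\in f^{-1}(V_g)$ with $w\in\widetilde W_g$ and $k\in K$, centrality of $K$ (which commutes with both $g$ and with $[g,w]\in G$) collapses $[g,y]$ to $[g,w]\in\widetilde U$, and since $[g,y]\in G$ we conclude $[g,y]\in\widetilde U\cap G=U$.

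Sufficiency is the harder direction. The algebraic picture forced above compels us to build $\widetilde G\cong K\times G$ (central direct product) with $\widetilde f(k,g)=f(g)$; the remaining task is purely topological, namely to place on $K\times G$, for a suitable group $K$, a group topology such that (i) the subspace topology on $\{e\}\times G$ equals $\tau$, (ii) $G$ is normal, (iii) $\widetilde f$ is continuous and open. A neighborhood base at $e$ in $\widetilde G$ is to be parametrized by pairs $(U,V)\in\VG\times\VH$, with basic neighborhoods whose $k=e$ slice restricts to $U$ (securing (i)) and whose $G$-projection contains $f^{-1}(V)$ (securing openness in (iii), since then $\widetilde f$ maps the basic neighborhood onto a set containing $V$). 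Condition (a) enters at the product-absorption axiom (continuity of multiplication): $U$-thinness of $f^{-1}(V)$ is exactly the uniform-conjugation statement that lets $f^{-1}(V)$-factors be absorbed through $U$-factors when expanding products of basic neighborhoods. Condition (b) enters at the verification of (ii): the pointwise bound $[g,f^{-1}(V_g)]\subseteq U$ guarantees that each individual $g\in G$ normalizes the basic neighborhoods.

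The main obstacle will be this sufficiency construction: choosing the base and then verifying, in one coherent calculation, the group-topology axioms together with the subspace-compatibility, normality of $G$, and openness of $\widetilde f$, where each requirement pulls the base in a slightly different direction. The delicate point is to plug (a) and (b) in at exactly the right places so that all these constraints are simultaneously satisfied.
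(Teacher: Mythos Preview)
The paper does not prove this theorem; it is quoted from Arnautov's original paper, so there is no in-paper proof to compare against. I will assess your argument on its own.

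Your necessity argument is correct, with one terminological caveat: from $[K,G]=\{e\}$ you conclude that $K$ is central in $\widetilde G$, but this only shows that $K$ centralizes $G$; since $K$ need not be abelian it need not lie in $Z(\widetilde G)$. This does not affect the argument, as you only ever use that elements of $K$ commute with elements of $G$.

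Your sufficiency direction is a plan rather than a proof, and while the overall shape is right and is indeed forced by the algebra you extracted, two points need sharpening. First, you must commit to a concrete $K$ and a concrete base; a choice that works is $K=G$ (abstractly) with basic neighborhoods $N(U,V)=\{(a,g)\in G\times G:a\in f^{-1}(V),\ a^{-1}g\in U\}$, for which the trace on $\{e\}\times G$ is $\{e\}\times U$ and $\widetilde f(N(U,V))=V\cdot f(U)\supseteq V$. Second, your attribution of condition~(b) to ``normality of $G$'' is off: in a direct product $K\times G$ the factor $\{e\}\times G$ is normal for free, so item~(ii) costs nothing. Where~(b) is actually needed is the \emph{conjugation axiom} for the neighborhood base (continuity of inner automorphisms, part of verifying that the base generates a group topology): conjugating $(a,au)\in N(U',V')$ by an arbitrary $(b,c)\in\widetilde G$ produces a second-coordinate defect whose essential part, after splitting off $c^{-1}uc$, is the commutator $[b^{-1}c,\,c^{-1}a^{-1}c]$, and this is precisely what~(b) bounds. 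Condition~(a) handles the product axiom (and also the inverse axiom), as you indicate.
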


In \cite{AGB} we extended the notion of semitopological isomorphism introducing semitopological \emph{homomorphisms}. We defined new properties and considered particular cases in order to give internal conditions, similar to those of Theorem \ref{semitop}, which are sufficient or necessary for a continuous surjective homomorphism to be semitopological. Finally we established various stability properties of the class of all semitopological homomorphisms. Many particular cases are considered and they turn out to be useful also in this paper as well as other particular results; for those we will give references.

\bigskip
In Section \ref{properties} we give general properties of semitopological isomorphisms and see some stability properties of the class $\mathcal S_i$ of all semitopological isomorphisms. In fact it has been proved in \cite{Ar} that the class $\mathcal S_i$ is stable under taking subgroups, quotients and products, but not under taking compositions.

\bigskip
The aim of this paper is to discuss and answer the following problems raised by Arnautov \cite{Ar}:

\vspace{7pt}
\noindent\textbf{Problem A.} \cite[Problem 14]{Ar}
\emph{Find groups $G$ such that for every group topology $\tau$ on $G$ every semitopological isomorphism $f:(G,\tau)\to(H,\sigma)$, where $(H,\sigma)$ is a topological group, is open.}
\vspace{7pt}

\vspace{7pt}
\noindent\textbf{Problem B.} \cite[Problem 13]{Ar}
Let $\mathcal K$ be a class of topological groups. Find $(G,\tau)\in\mathcal K$ such that every semitopological isomorphism $f:(G,\tau)\to(H,\sigma)$ in $\mathcal K$ is open.
\vspace{7pt}

The third problem concerns compositions:

\vspace{7pt}
\noindent\textbf{Problem C.} \cite[Problem 15]{Ar}
\emph{\begin{itemize}
  \item[(a)] Which are the continuous isomorphisms of topological groups that are compositions of semitopological isomorphisms?
  \item[(b)] Is every continuous isomorphism of topological groups composition of semitopological isomorphisms?
\end{itemize}}
\vspace{7pt}

\subsection{The Open Mapping Theorem and its weaker versions}\label{OMT}

According to the Banach's open mapping theorem every surjective continuous
linear map between Banach spaces is  open
{\cite{Ba}. As a generalization, Pt\' ak \cite{Pt} introduced the notion
of $B$-{\it completeness}
for the class of linear topological spaces. It was based on the property  weaker than openness,
that can be
formulated also in the larger class of topological groups as follows: a
homomorphism $f:G\to H$ of topological groups is called {\it almost open},
if
for every neighborhood $U$ of $e_G$ in $G$ the image $f(U)$ is dense in some
neighborhood of $e_H$ in $H$.
A topological group $G$ is  {\it $B$-complete} (respectively, {\it
$B_r$-complete})  if every continuous almost open surjective homorphism
(respectively, isomorphism) from $G$ to any Hausdorff group is open.
These groups were intensively studied in the sixties and the seventies
(\cite{Br}, \cite{Gr}, \cite{Hu}, \cite{Su}).  It was shown by Husain
\cite{Hu} that  locally compact groups as well as complete metrizable
groups are $B$-complete. Brown \cite{Br} found a common generalization of
this fact by proving that \v Cech-complete groups are  $B$-complete.

The following notion introduced by Choquet (see Do\" \i tchinov \cite{Do})
and Ste\-phen\-son \cite{St} in 1970 takes us closer to the spirit of Banach
open mapping theorem:

\begin{defn}
A Hausdorff group topology $\tau$ on a group $G$ is \emph{minimal} if for
every continuous isomorphism $f:(G,\tau)\to H$, where $H$ is a
Hausdorff topological group, $f$ is a topological isomorphism. Call $G$ \emph{totally
minimal} if for every continuous homomorphism $f:(G,\tau)\to H$, where $H$
is
Hausdorff, $f$ is open.
\end{defn}

Clearly, the totally minimal groups are precisely the topological groups that satisfy the Banach's open mapping theorem. Since all surjective homomorphisms between precompact groups are almost open, a precompact group is $B_r$-complete (respectively, $B$-complete) if and only if it is minimal (respectively, totally minimal). In particular, the $B_r$-complete precompact abelian groups coincide with
the minimal abelian groups as every minimal abelian group is precompact according to the celebrated Prodanov-Stoyanov's theorem.
According to this theorem, an infinite minimal abelian group is never discrete. This radically changes in the non-abelian case. In the forties Markov asked whether every infinite group $G$ is topologizable (i.e.,
admits a non-discrete Hausdorff group topology).

\begin{defn}
A group $G$ is:
\begin{itemize}
       \item \emph{Markov} if the discrete topology $\delta_G$ is the unique Hausdorff group topology
on $G$ (i.e., $\delta_G$ is minimal);
       \item \emph{totally Markov} if $G/N$ is a Markov for every
$N\triangleleft G$.
\end{itemize}
\end{defn}

Obviously totally Markov implies Markov and finite groups are totally Mar\-kov, while every simple Markov group is totally Markov. Denote by ${\mathfrak M}$ and ${\mathfrak M}_t$ the classes of all Markov and totally Markov groups respectively. Markov's question (on whether ${\mathfrak M}$ contains infinite groups), was answered only thirty-five years later by Shelah \cite{Sh} (who needed CH for
his example, resolving simultaneously also Kurosh' problem) and Ol$'$shanskii (who made use  of the properties of remarkable Adian's groups) \cite{Ol}.

A smaller class of groups arose in the solution of a specific problem related to categorical compactness in \cite{DU}: namely the subclass of ${\mathfrak M}_t$ consisting of those groups $G\in {\mathfrak M}_t$ such that every subgroup of $G$ belongs to ${\mathfrak M}_t$ as well (these groups were named \emph{hereditarily non-topologizable} by Luk\'acs \cite{Lu}). It is still an open question whether an infinite hereditarily non-topologizable group exists (\cite{DS,DU,Lu}).

A possibility to relax the strong requirement in the open mapping theorem in the definition of minimal groups is to restrict the class of topological groups:

\begin{defn}\label{K-min}
Let $\mathcal K$ be a class of topological groups. A topological group $(G,\tau)\in\mathcal K$ is \emph{$\mathcal K$-minimal} if $(G,\sigma)\in\mathcal K$ and $\sigma\leq\tau$ imply $\tau=\sigma$.
\end{defn}

When $\mathcal K$ is the class of all metrizable abelian groups, $\mathcal K$-minimal groups are precisely the minimal abelian groups that are metrizable \cite{DPS}, but in general a $\mathcal K$-minimal group need not be minimal. Anyway, if $\mathcal H$ is the class of all Hausdorff topological groups, then $\mathcal H$-minimality is precisely the usual minimality.

Recently new generalizations of minimality for topological groups  were considered (relative minimality and co-minimality, cf. \cite{DM,Sl}).

\subsection{Main Results}

The next definition reminds the $B_r$-completeness (since we impose openness only on certain continuous isomorphisms, namely, the
semitopological ones):

\begin{defn}\label{Def_A-min}  
A group topology $\tau$ on $G$ is \emph{A-complete} if for every group topology $\sigma$ on $G$, $\sigma\leq\tau$ and $id_G:(G,\tau)\to(G,\sigma)$ semitopological imply $\tau=\sigma$.
\end{defn}

Finally, we can formulate the notion that captures the core of Problem A:

\begin{defn}\label{Def_A-group}
A group $G$ is an \emph{Arnautov group} if every group topology on $G$ is
A-complete (i.e., if for every pair of group topologies $\tau,\sigma$ on $G$ with
$\sigma<\tau$, $id_G:(G,\tau)\to(G,\sigma)$ is not semitopological).
\end{defn}

Hence Problem A can be formulate also as follows:
\emph{characterize the groups $G$ such that every group topology on $G$ is A-complete, that is, characterize the Arnautov groups.}

\smallskip
We denote by $\mathfrak A$ the class of all Arnautov groups.

\medskip
Ta\u\i manov \cite{Tai} introduced the group topology $T_G$ on a group $G$, which has the family of the centralizers of the elements of $G$ as a prebase of the filter of the neighborhood of $e_G$. This topology was introduced with the aim of the topologization of abstract groups with Hausdorff group topologies.

Since $id_G:(G,\delta_G)\to (G,\sigma)$ is semitopological if and only if $\sigma\geq T_G$ (see \cite[Corollary 5.3]{AGB} or Remark \ref{discrhom}) and we are studying Arnautov groups, we need to impose that $T_G$ is discrete and we introduce the following notion.

\begin{defn}\label{tai-def}
A group $G$ is:
\begin{itemize}
	\item \emph{Ta\u\i manov} if $T_G=\delta_G$;
	\item \emph{totally Ta\u\i manov} if $G/N$ is Ta\u\i manov for every $N\triangleleft G$.
\end{itemize}
\end{defn}

Obviously every simple Ta\u\i manov group is totally Ta\u\i manov.

We denote by $\mathfrak T$ and $\mathfrak T_t$ the classes of Ta\u\i manov and totally Ta\u\i manov groups respectively.

\medskip
Since Problem A in its full generality seems to be hard to handle (because of two universal quantifiers), we start considering a particular case, that is when the discrete topology on a group $G$ is A-complete and we prove that 
for a group $G$ the discrete topology is A-complete if and only if $G\in\mathfrak T$
(see Theorem \ref{discrAminz1}). Moreover we extend this result for almost trivial topologies (which are obtained from the trivial ones by extension, as their name suggests --- see Section \ref{at-sec}), characterizing in Theorem \ref{almdiscr-solution} when an almost trivial topology is A-complete in terms of $\mathfrak T$.

Moreover $\mathfrak T_t$ contains $\mathfrak A$, but we don't know if they coincide (see Theorem \ref{perfect-taimanov} and Question \ref{perfect-taimanov_question}).

Example \ref{S(Z)} considers properties of $S(\Z)$ related to Problem A. First of all it shows that A-completeness has a behavior different from that of the usual minimality. Indeed we see that $S(\Z)$ admits at least two different but comparable A-complete group topologies. Moreover $S(\Z)$ is not Ta\u\i manov and consequently not Arnautov. Nevertheless $S(\Z)/S_\omega(\Z)$ is totally Ta\u\i manov but we do not know if it is also Arnautov (see Question \ref{S(Z)/S}).

\medskip
This question can be seen as a first step in answering the following one, which could give an infinite example of a simple infinite Markov group without assuming CH (see Question \ref{S(Z)/S-markov}): 
\begin{center}
does $S(\Z)/S_\omega(\Z)\in \mathfrak M$?
\end{center}

But the situation can be reversed: if $S(\Z)/S_\omega(\Z)\in \mathfrak M$ then $S(\Z)/S_\omega(\Z)\in \mathfrak A$, in view of Corollary \ref{finite-arnautov}(b), which says that every simple Markov group is necessarily Arnautov.
Thanks to this property we have the unique infinite Arnautov group that we know at the moment, that is Shelah group, which is an infinite simple Markov group constructed under CH \cite{Sh} (see Example \ref{infinite-arnautov}).

\bigskip
The next definition, combining Definition \ref{Def_A-min} (A-completeness) and Definition \ref{K-min} ($\mathcal K$-minimality) will allow us to handle easier Problem B.

\begin{defn}
For a class $\mathcal K$ of topological groups, a topological group $(G,\tau)$ from $\mathcal K$ is \emph{A$_\mathcal K$-complete} if $(G,\sigma)\in\mathcal K$, $\sigma\leq\tau$ and $id_G:(G,\tau)\to(G,\sigma)$ semitopological imply $\tau=\sigma$.
\end{defn}

Let $\mathcal G$ be the class of all topological groups.

\begin{rem}\label{A_K-compl-rem}
\begin{itemize}
	\item[(a)]Obviously $\mathcal K$-minimality implies A$_\mathcal K$-completeness and $\mathcal K$-minimality coincides with A$_\mathcal K$-completeness whenever all groups in $\mathcal K\subseteq \mathcal G$ are abelian.
	\item[(b)]Moreover A-completeness coincides with A$_\mathcal G$-completeness. So Problem A can be seen as a particular case of Problem B, namely with $\mathcal K=\mathcal G$.
	\item[(c)]If $\mathcal K\subseteq\mathcal K'$ are classes of topological groups, then for every $G\in\mathcal K$ A$_{\mathcal K'}$-complete implies A$_\mathcal K$-complete. In particular, if $\mathcal K\subseteq\mathcal G$ and $G\in\mathcal K$, then $G$ A-complete implies $G$ A$_\mathcal K$-complete.
\end{itemize}
\end{rem}

Clearly A$_\mathcal H$-completeness is a generalization of minimality, since $\mathcal H$-mi\-ni\-ma\-li\-ty is precisely the usual minimality, which is intensively studied, as noted in Section \ref{OMT}. This is a strict generalization as shown by Example \ref{S(Z)-min}.

A topological group $G$ \emph{has small invariant neighborhoods} (i.e., \emph{$G$ is SIN}) if $G$ is thin (i.e., it has a local base at $e_G$ of neighborhoods invariant under conjugation). We prove that a topological group, which is SIN and A$_\mathcal H$-complete, is A-complete if and only it has trivial center (see Remark \ref{rem}). In particular, if $G$ is a group with trivial center, its discrete topology is A$_\mathcal H$-complete if and only if $G\in \mathfrak T$ (see Corollary \ref{A_H-min}). So also in this case Ta\u\i manov groups play a central role.

Moreover we give an example of a small class $\mathcal K$ in which each element is A$_\mathcal K$-complete (see Example \ref{K_R^0}). This class is built on the Heisenberg group $$\mathbb H_\mathbb R:=\begin{pmatrix}
    1 & \mathbb R & \mathbb R \\
    0 & 1 & \mathbb R\\
    0 & 0 & 1
\end{pmatrix},$$ that is the group of upper unitriangular $3\times 3$ matrices over $\mathbb R$, endowed with different group topologies.  The group $\mathbb H_\mathbb R$ is nilpotent of class $2$.

In a forthcoming paper \cite{DGB_x} we extend this example for generalized Heiseberg groups, that is, the group of upper unitriangular $3\times 3$ matrices over a unitary ring $A$.

\bigskip
In Example \ref{C(b)} we resolve negatively item (b) of Problem C.
Moreover Theorem \ref{n-step_semitop} answers partially (a), in the case when the topologies on the domain and on the codomain are the discrete and the indiscrete one respectively. Since we consider the trivial topologies, the condition that we find is exclusively algebraic. Indeed we prove that $id_G:(G,\delta_G)\to (G,\iota_G)$ is composition of $n$ semitopological isomorphisms if and only if $G$ is nilpotent of class $\leq n$, where $n\in\N_+$.

\subsection*{Notation and terminology}

We denote by $\mathbb R$, $\mathbb Q$, $\mathbb Z$, $\mathbb P$, $\mathbb N$ and $\mathbb N_+$ respectively the field of real numbers, the field of rational numbers, the ring of integers, the set of primes, the set of natural numbers and the set of positive integers.

Let $G$ be a group and $x,y\in G$. We denote by $[x,y]$ the commutator of $x$ and $y$ in $G$, that is $[x,y]=x y x^{-1}y^{-1}$ and for $x\in G$ and a subset $Y$ of $G$ let $[x,Y]=\{[x,y]:y\in Y\}$.
More in general, if $H$ and $K$ are subgroups of $G$, let $$[H,K]=\langle [h,k]:h\in H, k\in K\rangle,$$ and in particular the derived $G'$ of $G$ is $G'=[G,G]$, that is, the subgroup of $G$ generated by all commutators of elements of $G$. The center of $G$ is $Z(G)=\{x\in G:x g=g x, \forall g\in G\}$ and for $g\in G$ the centralizer of $g$ in $G$ is $c_G(g)=\{x\in G: x g=g x\}$.

The diagonal map $\Delta:G\to G\times G$ is defined by $\Delta(g)=(g,g)$ for every $g\in G$. If $H$ is another group, we denote by $p_1:G\times H\to G$ and $p_2:G\times H\to H$ the canonical projections on the first and the second component respectively. If $f:G\to H$ is a homomorphism, denote by $\Gamma_f$ the graph of $f$, that is the subgroup $\Gamma_f=\{(g,f(g)):g\in G\}$ of $G\times H$.

If $\tau$ is a group topology on $G$ then denote by $\mathcal V_{(G,\tau)}(e_G)$ the filter of all neighborhoods of $e_G$ in $(G,\tau)$ and by $\mathcal B_\tau$ a base of $\mathcal V_{(G,\tau)}(e_G)$. If $X$ is a subset of $G$, $\overline X ^\tau$ stands for the closure of $X$ in $(G,\tau)$.

If $N$ is a normal subgroup of $G$ and $\pi:G\to G/N$ is the canonical projection, then $\tau_q$ is the quotient topology of $\tau$ in $G/N$. Moreover $N_\tau$ denotes the subgroup $\overline{\{e_G\}}^\tau$. The discrete topology on $G$ is $\delta_G$ and the indiscrete topology on $G$ is $\iota_G$.

For undefined terms see \cite{E,Fuchs}.

\section{Properties of semitopological isomorphisms}\label{properties}

In the next remark we discuss the possibility to consider only the case of one group $G$ endowed with two different topologies $\tau\geq\sigma$ taking $id_G:(G,\tau)\to(G,\sigma)$ as the continuous isomorphism:

\begin{rem}
Let $(G,\tau)$, $(H,\sigma)$ be topological groups and $f:(G,\tau)\to (H,\eta)$ a continuous isomorphism. Consider the topology $\sigma=f^{-1}(\eta)$ on $G$. This topology $\sigma$ is coarser than $\tau$ and so $id_G:(G,\tau)\to (G,\sigma)$ is a continuous isomorphism and $(G,\sigma)$ is topologically isomorphic to $(H,\eta)$. In particular 
\begin{quote}
$id_G:(G,\tau)\to(G,\sigma)$ is semitopological if and only if\\
$f:(G,\tau)\to(H,\eta)$ is semitopological.
\end{quote}
\end{rem}

Moreover the next proposition shows that semitopological is a ``local'' property, like the stronger property open. The proof is a simple application of Theorem \ref{semitop}.

\begin{prop}
Let $G$ be a group and $\tau,\sigma$ group topologies on $G$ such that $\sigma\leq\tau$. Then $id_G:(G,\tau)\to(G,\sigma)$ is semitopological if there exists a $\tau$-open subgroup $N$ of $G$ such that $id_G \restriction_N:(N,\tau \restriction_N)\to(N,\sigma \restriction_N)$ is semitopological.
\end{prop}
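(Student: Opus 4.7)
My plan is to apply Theorem \ref{semitop}, which characterizes semitopological isomorphisms via two local conditions (a) and (b), and verify each of these for $id_G:(G,\tau)\to(G,\sigma)$ using the corresponding conditions on the restriction $id_G\restriction_N$. Both (a) and (b) are quantified over $\tau$-neighborhoods $U$ of $e_G$, so the $\tau$-openness of $N$ is exactly the right hypothesis: for any $U\in\mathcal V_{(G,\tau)}(e_G)$ the intersection $U\cap N$ is still in $\mathcal V_{(G,\tau)}(e_G)$, which lets me reduce immediately to $U\subseteq N$ and hence $U\in\mathcal V_{(N,\tau\restriction_N)}(e_G)$.

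With this reduction, Theorem \ref{semitop} applied to $id_G\restriction_N$ provides a $V_0\in\mathcal V_{(N,\sigma\restriction_N)}(e_G)$ and a $U_1\in\mathcal V_{(N,\tau\restriction_N)}(e_G)$ with $xU_1x^{-1}\subseteq U$ for every $x\in V_0$ (condition (a) in $N$), together with a $W_g\in\mathcal V_{(N,\sigma\restriction_N)}(e_G)$ for each $g\in N$ satisfying $[g,W_g]\subseteq U$ (condition (b) in $N$). These lift to $G$ essentially automatically: $U_1$ is $\tau\restriction_N$-open, and because $N$ is itself $\tau$-open in $G$ this forces $U_1$ to be $\tau$-open in $G$, so $U_1\in\mathcal V_{(G,\tau)}(e_G)$; while each $V_0$ and $W_g$ is, by the definition of the subspace topology, the trace on $N$ of a $\sigma$-neighborhood of $e_G$ in $G$, which I take as the required $\sigma$-witness. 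Condition (a) of Theorem \ref{semitop} for $id_G$ is then immediate, since the conjugation inclusion $xU_1x^{-1}\subseteq U$ is a purely algebraic statement that survives the passage $N\hookrightarrow G$.

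The main obstacle I anticipate is verifying condition (b) of Theorem \ref{semitop} for elements $g\in G\setminus N$, where the restriction hypothesis says nothing directly. I would handle such a $g$ by combining the $\tau$-openness of $N$ with continuity of conjugation by $g$ on $(G,\tau)$: starting from $U\subseteq N$ in $\mathcal V_{(G,\tau)}(e_G)$, I pick a symmetric $U'\in\mathcal V_{(G,\tau)}(e_G)$ with $U'\cdot U'\subseteq U$ and $gU'g^{-1}\subseteq U'$, then apply the version of (b) already established in $N$ to the $\tau\restriction_N$-neighborhood $U'\cap N$ to extract a $\sigma\restriction_N$-neighborhood $W\subseteq U'$, and conclude $[g,W]\subseteq (gWg^{-1})\cdot W^{-1}\subseteq U'\cdot U'\subseteq U$. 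After this last case is dispatched, the remaining work is just bookkeeping between subspace and ambient neighborhoods, matching the authors' description of the proof as a ``simple application of Theorem \ref{semitop}''.
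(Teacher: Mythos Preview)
Your argument contains genuine gaps at two places, and in fact the statement as written is false, so no repair is possible.

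First, when you lift a $\sigma\restriction_N$-neighborhood $V_0$ (or $W_g$) to a $\sigma$-neighborhood $V$ in $G$ with $V\cap N=V_0$, the conjugation or commutator condition is only known for elements of $V_0$, not for elements of $V\setminus N$. Since $N$ is assumed $\tau$-open but not $\sigma$-open, there is no reason for $V_0$ itself to be a $\sigma$-neighborhood in $G$, and the enlargement to $V$ loses the needed inclusion. Second, in your treatment of condition~(b) for $g\notin N$, two steps fail: you ask for $U'\in\mathcal V_{(G,\tau)}(e_G)$ with $gU'g^{-1}\subseteq U'$, but continuity of conjugation only yields $gU'g^{-1}\subseteq U$ for a suitably smaller $U'$---invariance of $U'$ itself is a SIN-type condition that is not available in general; and you claim to ``extract a $\sigma\restriction_N$-neighborhood $W\subseteq U'$'' from condition~(b) on $N$, but condition~(b) produces a $\sigma$-neighborhood whose \emph{commutator with a fixed element of $N$} lies in the given $\tau$-neighborhood, not a $\sigma$-neighborhood that is itself contained in that $\tau$-neighborhood---and in any case it cannot be invoked for $g\notin N$.

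These are not technicalities: take $G=S_3$, $\tau=\delta_G$, $\sigma=\iota_G$, and $N=A_3$. Then $N$ is $\tau$-open and $id_N:(N,\delta_N)\to(N,\iota_N)$ is semitopological since $N$ is abelian, yet $id_G:(S_3,\delta_{S_3})\to(S_3,\iota_{S_3})$ is not semitopological by Corollary~\ref{hausdorff}. (Replacing ``$\tau$-open'' by ``$\sigma$-open'' does not save the statement either: with $G=D_\infty=\langle r,s\mid s^2=e,\ srs^{-1}=r^{-1}\rangle$, $N=\langle r\rangle$, $\tau=\delta_G$, and $\sigma$ the group topology with local base $\{\langle r^{p^n}\rangle:n\in\mathbb N\}$, the subgroup $N$ is $\sigma$-open and $id_N$ is semitopological, but condition~(b) for $g=s$ and $U=\{e_G\}$ forces $V_s\subseteq c_G(s)\cap N=\{e_G\}$, which is impossible since $\sigma\restriction_N$ is not discrete.) The paper does not supply a proof beyond calling it a ``simple application of Theorem~\ref{semitop}'', so there is nothing to compare your approach against; the proposition itself appears to be misstated.
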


The following theorems show the stability for subgroups, quotients and products.

\begin{theo}\emph{\cite[Theorems 7 and 8]{Ar}}\label{subgroupst}\label{quotientst}
Let $G$ be a group, $\sigma\leq\tau$ group topologies on $G$ and suppose that $id_G:(G,\tau)\to(G,\sigma)$ is semitopological.
\begin{itemize}
	\item[(a)]If $A$ is a subgroup of $G$, then $id_A:(A,\tau \restriction_A)\to (A,\sigma \restriction_A)$ is semitopological.
	\item[(b)]If $A$ is a normal subgroup of $G$, then $id_{G/A}:(G/A,\tau_q)\to (G/A,\sigma_q)$ is semitopological.
\end{itemize}
\end{theo}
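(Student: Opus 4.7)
The plan is to reduce both parts of the theorem to the Arnautov characterization stated as Theorem \ref{semitop}, applied to the identity $id_G\colon(G,\tau)\to(G,\sigma)$. In this specialisation, the two conditions on a neighborhood $U\in\mathcal V_{(G,\tau)}(e_G)$ read: (i) there is $V\in\mathcal V_{(G,\sigma)}(e_G)$ that is $U$-thin in $(G,\tau)$, witnessed by some $W\in\mathcal V_{(G,\tau)}(e_G)$ with $xWx^{-1}\subseteq U$ for all $x\in V$; and (ii) for every $g\in G$ there is $V_g\in\mathcal V_{(G,\sigma)}(e_G)$ with $[g,V_g]\subseteq U$. I would then check that these survive the two natural operations.

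For item (a), I would fix a basic $\tau\restriction_A$-neighborhood $U\cap A$ of $e_A$ and simply intersect the data above with $A$. Since $V\cap A\subseteq A$, for $x\in V\cap A$ the identity $xAx^{-1}=A$ gives $x(W\cap A)x^{-1}\subseteq(xWx^{-1})\cap A\subseteq U\cap A$, so $V\cap A$ is $(U\cap A)$-thin in $(A,\tau\restriction_A)$ with witness $W\cap A$. For the commutator condition with $g\in A$, the inclusion $[g,V_g\cap A]\subseteq[g,V_g]\cap A\subseteq U\cap A$ holds because commutators of elements of $A$ stay inside $A$. An appeal to Theorem \ref{semitop} then yields that $id_A\colon(A,\tau\restriction_A)\to(A,\sigma\restriction_A)$ is semitopological.

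For item (b), the normality of $A$ makes the canonical projection $\pi\colon G\to G/A$ open, continuous and surjective for both $\tau_q$ and $\sigma_q$, so a basic $\tau_q$-neighborhood of $e_{G/A}$ has the form $\pi(U)$ with $U\in\mathcal V_{(G,\tau)}(e_G)$. I would push the same data forward through $\pi$: set $\overline V=\pi(V)$ and $\overline W=\pi(W)$. Since $\pi$ is a homomorphism, for each $\overline x=\pi(x)\in\overline V$ the expression $\overline x\,\overline W\,\overline x^{-1}$ depends only on $\overline x$, and $\overline x\,\overline W\,\overline x^{-1}=\pi(xWx^{-1})\subseteq\pi(U)$, so $\overline V$ is $\pi(U)$-thin. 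For the commutator condition, given $\overline g\in G/A$ lift it to some $g\in G$ and use $V_g$ to get $[\overline g,\pi(V_g)]=\pi([g,V_g])\subseteq\pi(U)$. A further application of Theorem \ref{semitop} proves that $id_{G/A}$ is semitopological.

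No serious obstacle is anticipated; the argument is a routine verification. The only mild care is that the thinness witness $W\cap A$ in (a) must itself be a $\tau\restriction_A$-neighborhood of $e_A$, which is automatic, and that in (b) one invokes openness of $\pi$ to represent a generic $\tau_q$-neighborhood as $\pi(U)$. Both proofs use only the intrinsic characterization of Theorem \ref{semitop} together with the elementary algebraic fact that intersection with $A$ and the projection $\pi$ commute with conjugation and with the commutator bracket.
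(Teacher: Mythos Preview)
The paper does not supply its own proof of this theorem; it merely quotes the result from Arnautov's original article \cite{Ar}, so there is no in-paper argument to compare against. Your verification via the intrinsic characterization of Theorem~\ref{semitop} is correct and is the natural route: intersecting with $A$ in (a) and pushing forward along the open surjection $\pi$ in (b) preserve both the thinness and commutator conditions exactly as you indicate, and no step is problematic.
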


\begin{theo}\emph{\cite[Theorem 9]{Ar}, \cite[Theorem 6.15]{AGB}}\label{productst}
Let $\{G_i:i\in I\}$ be a family of groups and $\{\tau_i:i\in I\}$, $\{\sigma_i:i\in I\}$ families of group topologies such that $\sigma_i\leq\tau_i$ are group topologies on $G_i$ for every $i\in I$. Then $id_{G_i}:(G_i,\tau_i)\to(G_i,\sigma_i)$ is semitopological for every $i\in I$ if and only if $\prod_{i\in I}id_{G_i}:\prod_{i\in I}(G_i,\tau_i)\to\prod_{i\in I}(G_i,\sigma_i)$ is semitopological.
\end{theo}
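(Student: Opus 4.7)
The plan is to reduce everything to Arnautov's internal characterization (Theorem \ref{semitop}), exploiting the fact that a basic neighborhood of the identity in a product topology has only finitely many non-trivial coordinates.

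For the easier direction ($\Leftarrow$), I would argue that each factor $(G_j,\tau_j)$ embeds into the product as a topological subgroup (for instance via the section that sends $x \mapsto (x, (e_{G_i})_{i\neq j})$), and the restriction of $\prod id_{G_i}$ to this subgroup is topologically conjugate to $id_{G_j}:(G_j,\tau_j)\to(G_j,\sigma_j)$. So the claim follows immediately from Theorem \ref{subgroupst}(a). Alternatively, one can view $(G_j,\tau_j)$ as the quotient of the product by the normal subgroup $\prod_{i\neq j}G_i$ and invoke Theorem \ref{subgroupst}(b).

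For the non-trivial direction ($\Rightarrow$), assume each $id_{G_i}$ is semitopological and let $G=\prod_i G_i$, $\tau=\prod_i\tau_i$, $\sigma=\prod_i\sigma_i$. I would verify conditions (a) and (b) of Theorem \ref{semitop} for $id_G:(G,\tau)\to(G,\sigma)$. Take a basic $\tau$-neighborhood $U=\prod_i U_i$ with $U_i\in \mathcal V_{(G_i,\tau_i)}(e_{G_i})$ and $U_i=G_i$ for all $i$ outside a finite set $F\subseteq I$. For (a), for each $i\in F$ apply Theorem \ref{semitop}(a) to $id_{G_i}$ to obtain $V_i\in\mathcal V_{(G_i,\sigma_i)}(e_{G_i})$ such that $V_i$ is $U_i$-thin in $(G_i,\tau_i)$, witnessed by some $W_i\in \mathcal V_{(G_i,\tau_i)}(e_{G_i})$ with $x W_i x^{-1}\subseteq U_i$ for all $x\in V_i$; set $V_i=G_i$ and $W_i=G_i$ for $i\notin F$. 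Then $V=\prod_i V_i$ is a $\sigma$-neighborhood of $e_G$, $W=\prod_i W_i$ is a $\tau$-neighborhood of $e_G$, and coordinate-wise $x W x^{-1}\subseteq U$ for every $x\in V$, so $V$ is $U$-thin in $(G,\tau)$. For (b), given $g=(g_i)\in G$, use Theorem \ref{semitop}(b) applied to each $id_{G_i}$ with $i\in F$ to find $V_{g_i}\in\mathcal V_{(G_i,\sigma_i)}(e_{G_i})$ with $[g_i,V_{g_i}]\subseteq U_i$, and take $V_{g_i}=G_i$ otherwise; then $V_g=\prod_i V_{g_i}\in\mathcal V_{(G,\sigma)}(e_G)$ satisfies $[g,V_g]\subseteq U$ coordinate-wise.

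The only point requiring care is that condition (a) asks $V$ to be $U$-thin, meaning the intersection $\bigcap_{x\in V}x^{-1}Ux$ is a neighborhood of $e_G$; this reduces to the coordinate-wise statement precisely because $W_i$ can be chosen equal to $G_i$ outside the finite set $F$, so $W=\prod_i W_i$ remains open in $\tau$. This finiteness of $F$ is the sole reason the product statement works; otherwise the required uniform thinness witness $W$ would fail to be a $\tau$-neighborhood. No step is really a serious obstacle, but the bookkeeping above is the one place where it matters that we are dealing with the product topology (and not, say, the box topology).
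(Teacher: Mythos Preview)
Your argument is correct. The reduction to Theorem~\ref{semitop} is clean: for the backward direction the subgroup stability (Theorem~\ref{subgroupst}(a)) does the job, and for the forward direction the coordinate-wise verification of conditions (a) and (b) goes through exactly because basic neighborhoods in the product topology have finite support, so the witnesses $W$ and $V_g$ you build are genuine $\tau$- and $\sigma$-neighborhoods.

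Note, however, that the present paper does not actually supply a proof of this theorem: it is quoted from \cite[Theorem~9]{Ar} and \cite[Theorem~6.15]{AGB} and stated without argument. Your proof is precisely the expected one---Arnautov's original characterization (Theorem~\ref{semitop}) is the only tool in sight, and the product structure forces the coordinate-wise argument---so it is safe to assume that the proofs in the cited references proceed along the same lines. There is nothing to compare beyond that.
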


The next lemma shows a cancellability property of compositions of semitopological isomorphisms.

\begin{lem}\label{primo_ok}\emph{\cite[Theorem 6.11]{AGB}}
Let $\sigma\leq\tau$ be group topologies on a group $G$. If $id_G:(G,\tau)\to(G,\sigma)$ is semitopological, then for a group topology $\rho$ on $G$ such that $\sigma\leq\rho\leq\tau$, $id_G:(G,\tau)\to(G,\rho)$ is semitopological.
\end{lem}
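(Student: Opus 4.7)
The plan is to apply Arnautov's characterization of semitopological isomorphisms (Theorem \ref{semitop}) and observe that the two witnessing conditions both refer to neighborhoods of $e_G$ in the target topology; enlarging the target topology only enlarges the supply of available witnesses, so the conditions automatically transfer.

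More concretely, I would first unwind the hypothesis via Theorem \ref{semitop} applied to $f=id_G\colon(G,\tau)\to(G,\sigma)$. Since $f^{-1}=id_G$, this says: for every $U\in\mathcal V_{(G,\tau)}(e_G)$
\begin{itemize}
\item[(a)] there exists $V\in\mathcal V_{(G,\sigma)}(e_G)$ such that $V$ is $U$-thin in $(G,\tau)$;
\item[(b)] for every $g\in G$ there exists $V_g\in\mathcal V_{(G,\sigma)}(e_G)$ such that $[g,V_g]\subseteq U$.
\end{itemize}
Both statements are purely about $U$ (a fixed $\tau$-neighborhood) and about sets $V, V_g$ that happen to lie in $\mathcal V_{(G,\sigma)}(e_G)$; the notion of $U$-thinness refers only to $\tau$, not to $\sigma$.

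Now I would use the assumption $\sigma\leq\rho$, which gives the inclusion $\mathcal V_{(G,\sigma)}(e_G)\subseteq\mathcal V_{(G,\rho)}(e_G)$. Hence the very same $V$ and $V_g$ produced by (a) and (b) above also lie in $\mathcal V_{(G,\rho)}(e_G)$, so they witness the corresponding conditions (a) and (b) of Theorem \ref{semitop} for the map $id_G\colon(G,\tau)\to(G,\rho)$. Applying the sufficiency direction of Theorem \ref{semitop} then yields that $id_G\colon(G,\tau)\to(G,\rho)$ is semitopological, which is the desired conclusion.

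There is essentially no obstacle here: the only thing one has to check is that neither of Arnautov's two conditions (the thinness condition or the commutator condition) interacts with the target topology except through the requirement that the chosen neighborhoods be taken from the target's neighborhood filter at $e_G$, so a coarsening-to-intermediate refinement of the target is harmless. The hypothesis $\rho\leq\tau$, needed to make $id_G\colon(G,\tau)\to(G,\rho)$ continuous in the first place, is used only to guarantee that the statement even makes sense.
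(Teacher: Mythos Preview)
Your argument is correct: applying Theorem~\ref{semitop} to $id_G:(G,\tau)\to(G,\sigma)$ and using the inclusion $\mathcal V_{(G,\sigma)}(e_G)\subseteq\mathcal V_{(G,\rho)}(e_G)$ (from $\sigma\leq\rho$) immediately transfers both witnessing conditions to the map $id_G:(G,\tau)\to(G,\rho)$, and $\rho\leq\tau$ ensures continuity. The paper does not supply its own proof of this lemma---it is quoted from \cite[Theorem~6.11]{AGB}---so there is no in-paper argument to compare against; your route via the internal characterization is the natural one and is exactly what one would expect.
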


In a particular case, that is for initial topologies, the converse implication of Theorem \ref{quotientst}(b) holds true:

\begin{lem}\label{initial}
Let $G$ be a group and $N$ a normal subgroup of $G$. Let $\sigma\leq\tau$ be group topologies on $G/N$ and $\sigma_i\leq\tau_i$  the respective initial topologies on $G$. Then $id_{G}:(G,\tau_i)\to (G,\sigma_i)$ is semitopological if and only if $id_{G/N}:(G/N,\tau)\to (G/N,\sigma)$ is semitopological.
\end{lem}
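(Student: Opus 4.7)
The plan is to exploit the two characterizations of semitopological isomorphisms available: for the ``only if'' direction, Theorem~\ref{quotientst}(b) about stability under quotients; and for the ``if'' direction, the internal commutator/thinness criterion of Theorem~\ref{semitop}.

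For the forward implication, I would first observe that the quotient topology of the initial topology coincides with the original topology on $G/N$. More precisely, writing $\pi:G\to G/N$ for the canonical projection, the topology $\tau_i$ has $\{\pi^{-1}(U'):U'\in\mathcal V_{(G/N,\tau)}(e_{G/N})\}$ as a base of neighborhoods at $e_G$. If $W\subseteq G/N$ is a $(\tau_i)_q$-neighborhood of $e_{G/N}$, then $\pi^{-1}(W)$ contains some $\pi^{-1}(U')$, and by surjectivity of $\pi$ we get $U'=\pi(\pi^{-1}(U'))\subseteq\pi(\pi^{-1}(W))=W$, so $W\in\mathcal V_{(G/N,\tau)}(e_{G/N})$. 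The reverse containment is immediate from the continuity of $\pi:(G,\tau_i)\to(G/N,\tau)$. Hence $(\tau_i)_q=\tau$, and similarly $(\sigma_i)_q=\sigma$. Applying Theorem~\ref{quotientst}(b) to $id_G:(G,\tau_i)\to(G,\sigma_i)$ then yields that $id_{G/N}:(G/N,\tau)\to(G/N,\sigma)$ is semitopological.

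For the backward implication, assume $id_{G/N}$ is semitopological and verify conditions (a) and (b) of Theorem~\ref{semitop} for $id_G:(G,\tau_i)\to(G,\sigma_i)$. Given $U\in\mathcal V_{(G,\tau_i)}(e_G)$, it suffices to treat $U=\pi^{-1}(U')$ for some $U'\in\mathcal V_{(G/N,\tau)}(e_{G/N})$. Apply Theorem~\ref{semitop} to $id_{G/N}$ at $U'$ to obtain $V'\in\mathcal V_{(G/N,\sigma)}(e_{G/N})$ witnessing (a) and, for each $g\in G$, a $V'_{\pi(g)}\in\mathcal V_{(G/N,\sigma)}(e_{G/N})$ witnessing (b). Set $V:=\pi^{-1}(V')\in\mathcal V_{(G,\sigma_i)}(e_G)$ and $V_g:=\pi^{-1}(V'_{\pi(g)})\in\mathcal V_{(G,\sigma_i)}(e_G)$. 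For (a), use the key identity $x^{-1}\pi^{-1}(U')x=\pi^{-1}(\pi(x)^{-1}U'\pi(x))$ together with the surjectivity $\pi(V)=V'$ to conclude
\[
\bigcap_{x\in V}x^{-1}Ux\;=\;\pi^{-1}\Bigl(\bigcap_{\bar x\in V'}\bar x^{-1}U'\bar x\Bigr),
\]
which is a $\tau_i$-neighborhood of $e_G$ because its argument is a $\tau$-neighborhood of $e_{G/N}$. For (b), the compatibility $\pi([g,y])=[\pi(g),\pi(y)]$ gives $[g,V_g]\subseteq\pi^{-1}([\pi(g),V'_{\pi(g)}])\subseteq\pi^{-1}(U')=U$.

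No step seems genuinely hard; the only subtlety to get right is the identification $(\tau_i)_q=\tau$, which is what makes Theorem~\ref{quotientst}(b) applicable, and the set-theoretic care needed when pulling back the thinness condition through $\pi$ (using $\pi(V)=V'$ so that no elements of $V'$ are missed when intersecting conjugates). Everything else is a routine lift along the projection.
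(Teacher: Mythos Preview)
Your proof is correct. The paper actually states this lemma without proof; the sentence preceding it (``In a particular case, that is for initial topologies, the converse implication of Theorem~\ref{quotientst}(b) holds true'') makes clear that the ``only if'' direction is meant to be read off directly from Theorem~\ref{quotientst}(b), which is exactly what you do after noting $(\tau_i)_q=\tau$. For the ``if'' direction the paper gives no argument at all, and your verification via the criterion of Theorem~\ref{semitop}---pulling back the thinness witness and the commutator witness along $\pi$---is the natural one and is carried out correctly, including the point that $\pi(V)=V'$ so the intersection over $x\in V$ matches the intersection over $\bar x\in V'$.
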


In the next theorem we consider the particular cases when one of the two topologies on $G$ is trivial:

\begin{theo}\emph{\cite[Corollary 5]{Ar}, \cite[Corollary 5.11]{AGB}}\label{discrete}\label{ind}
Let $G$ be a group and $\tau$ a group topology on $G$. Then:
\begin{itemize}
	\item[(a)] $id_G:(G,\delta_G)\to(G,\tau)$ is semitopological if and only if $c_G(g)$ is $\tau$-open for every $g\in G$;
	\item[(b)] $id_G:(G,\tau)\to(G,\iota_G)$ is semitopological if and only if $G'\leq N_\tau$.
\end{itemize}
\end{theo}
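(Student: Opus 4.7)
The plan is to read off both parts directly from Arnautov's criterion (Theorem \ref{semitop}) applied to $f = id_G$, exploiting that in each case one of the two topologies degenerates and collapses one half of the characterization.

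For part (a), with $f = id_G:(G,\delta_G)\to(G,\tau)$, the first observation is that condition (a) of Theorem \ref{semitop} is automatic: since the source is discrete, for any $U\in\mathcal V_{(G,\delta_G)}(e_G)$ and any $V\in\mathcal V_{(G,\tau)}(e_G)$ the intersection $\bigcap_{x\in V}x^{-1}Ux$ contains $e_G$ and is therefore a $\delta_G$-neighborhood of $e_G$. All the content sits in condition (b), which I would specialize to $U=\{e_G\}$: this forces, for every $g\in G$, a $V_g\in\mathcal V_{(G,\tau)}(e_G)$ with $[g,V_g]=\{e_G\}$, equivalent to $V_g\subseteq c_G(g)$. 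Hence $c_G(g)$ contains a $\tau$-neighborhood of $e_G$, and any subgroup with this property is $\tau$-open. For the converse, taking $V_g=c_G(g)$ gives $[g,V_g]=\{e_G\}\subseteq U$ for every $U$.

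For part (b), with $f=id_G:(G,\tau)\to(G,\iota_G)$, the only $\iota_G$-neighborhood of $e_G$ is $G$, so one must take $V=V_g=G$ in both conditions of Theorem \ref{semitop}. Condition (b) then reduces to $[g,G]\subseteq U$ for every $g\in G$ and every $U\in\mathcal V_{(G,\tau)}(e_G)$. Intersecting over $U$ and invoking the standard identity $N_\tau=\overline{\{e_G\}}^\tau=\bigcap_{U\in\mathcal V_{(G,\tau)}(e_G)}U$ converts this into $[g,G]\subseteq N_\tau$ for every $g$, i.e., $G'\leq N_\tau$. Conversely, once $G'\leq N_\tau$, condition (b) is immediate.

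The only non-automatic point is condition (a) in the converse direction of (b), which demands that $G$ itself be $U$-thin for every $U\in\mathcal V_{(G,\tau)}(e_G)$; this is what I expect to be the main obstacle. I would handle it with a short continuity argument: for $U\in\mathcal V_{(G,\tau)}(e_G)$, continuity of multiplication at $(e_G,e_G)$ yields $V_1,V_2\in\mathcal V_{(G,\tau)}(e_G)$ with $V_1V_2\subseteq U$, and since $N_\tau\subseteq V_1$ we get $N_\tau V_2\subseteq U$. Then for every $x\in G$ and $u\in V_2$ the identity $xux^{-1}=[x,u]u$ places $xux^{-1}$ in $G'\cdot u\subseteq N_\tau V_2\subseteq U$, showing $V_2\subseteq\bigcap_{x\in G}x^{-1}Ux$ and hence that $G$ is $U$-thin. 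This is the only computation beyond a straight translation of Theorem \ref{semitop}, and it presents no serious difficulty.
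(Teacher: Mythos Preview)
Your proof is correct; both parts are a clean unwinding of Theorem~\ref{semitop}, and the only nontrivial step---showing in (b) that $G'\leq N_\tau$ forces $G$ to be $U$-thin for every $U$---is handled correctly via $xux^{-1}=[x,u]u\in N_\tau V_2\subseteq U$. The paper itself does not supply a proof (the result is quoted from \cite{Ar} and \cite{AGB}), but your argument is exactly the intended one, and your thinness computation is precisely the observation the paper alludes to after the statement, namely that $G'\leq N_\tau$ makes $(G,\tau)$ SIN.
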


Since $Z(G)\subseteq c_G(g)$ for every $g\in G$, by (a) $id_G:(G,\delta_G)\to(G,\tau)$ semitopological implies $Z(G)$ $\tau$-open.

The condition $G'\leq N_\tau$ in (b) is equivalent to say that $G'$ is indiscrete endowed with the topology inherited from $(G,\tau)$. Moreover, as noted in \cite{AGB}, it implies that $(G,\tau)$ is SIN.

\smallskip
For SIN groups condition (a) of Theorem \ref{semitop} is always verified, since SIN groups are thin, so only condition (b) remains:

\begin{prop}\label{SINsemitop}
Let $G$ be a group and $\sigma\leq \tau$ group topologies on $G$. Suppose that $(G,\tau)$ is SIN. Then $id_G:(G,\tau)\to(G,\sigma)$ is semitopological if and only if for every $U\in\mathcal V_{(G,\tau)}(e_G)$ and for every $g\in G$ there exists $V_g\in\mathcal V_{(G,\sigma)}(e_G)$ such that $[g,V_g]\subseteq U$.
\end{prop}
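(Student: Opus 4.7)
The plan is to apply Arnautov's characterization (Theorem~\ref{semitop}) to the continuous isomorphism $id_G:(G,\tau)\to (G,\sigma)$. Since $f=id_G$, we have $f^{-1}(V)=V$ for every $V\in \mathcal V_{(G,\sigma)}(e_G)$, so condition (b) of Theorem~\ref{semitop} is verbatim the condition appearing in our statement. Hence the forward implication is immediate, and the only content of the proposition is that, under the SIN hypothesis, condition (a) of Theorem~\ref{semitop} is automatic.

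To dispense with (a), I would fix an arbitrary $U\in \mathcal V_{(G,\tau)}(e_G)$ and, using that $(G,\tau)$ is SIN, pick a $\tau$-neighborhood $U_1\subseteq U$ of $e_G$ which is invariant under conjugation, i.e.\ $xU_1x^{-1}=U_1$ for every $x\in G$. Then for \emph{any} subset $M\subseteq G$ one has
\[
\bigcap_{x\in M}x^{-1}Ux\;\supseteq\;\bigcap_{x\in M}x^{-1}U_1x\;=\;U_1,
\]
so $M$ is $U$-thin. In particular $f^{-1}(V)=V$ is $U$-thin for every choice of $V\in \mathcal V_{(G,\sigma)}(e_G)$, which means condition (a) of Theorem~\ref{semitop} is trivially satisfied. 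Combined with the hypothesis (b) of the proposition (equivalently, (b) of Theorem~\ref{semitop}), this gives that $id_G$ is semitopological.

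There is no real obstacle: the statement is essentially a direct specialization of Theorem~\ref{semitop}, the only observation being that a SIN topology renders the ``$U$-thin'' requirement in (a) vacuous, because the invariant neighborhoods provide a single $\tau$-neighborhood $U_1$ of $e_G$ that sits inside $x^{-1}Ux$ for every $x\in G$ at once.
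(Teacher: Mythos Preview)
Your proof is correct and follows exactly the approach the paper indicates: the paper does not give a separate proof, but notes in the sentence preceding the proposition that ``for SIN groups condition (a) of Theorem~\ref{semitop} is always verified, since SIN groups are thin, so only condition (b) remains.'' Your argument simply spells out this observation in detail.
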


The next lemma gives a simple necessary condition of algebraic nature for a continuous isomorphism to be semitopological.

\begin{lem}\label{G,N<N}
Let $G$ be a group and $\sigma\leq \tau$ group topologies on $G$, such that $id_G:(G,\tau)\to(G,\sigma)$ is semitopological. Then $[G,N_\sigma]\leq N_\tau$.
\end{lem}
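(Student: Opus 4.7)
The plan is to unfold the definition of $N_\sigma$ and $N_\tau$ as intersections of neighborhood filters and then apply condition (b) of Arnautov's characterization (Theorem~\ref{semitop}) to the identity $id_G:(G,\tau)\to(G,\sigma)$.

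First I would record the standard fact that in any topological group the closure of the neutral element coincides with the intersection of the neighborhood filter of $e_G$: explicitly, $N_\rho=\overline{\{e_G\}}^{\rho}=\bigcap\{U:U\in\mathcal V_{(G,\rho)}(e_G)\}$, which is a (closed normal) subgroup. I would apply this to both $\rho=\sigma$ and $\rho=\tau$.

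Next comes the core step. Fix $g\in G$ and $h\in N_\sigma$; I want to show $[g,h]\in N_\tau$, which by the above reduces to showing $[g,h]\in U$ for every $U\in\mathcal V_{(G,\tau)}(e_G)$. Given such $U$, since $id_G:(G,\tau)\to(G,\sigma)$ is semitopological, condition (b) of Theorem~\ref{semitop} (with $f=id_G$, so that $f^{-1}(V_g)=V_g$) yields a $V_g\in\mathcal V_{(G,\sigma)}(e_G)$ with $[g,V_g]\subseteq U$. But $h\in N_\sigma\subseteq V_g$, hence $[g,h]\in[g,V_g]\subseteq U$, as required.

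Finally I would conclude by invoking that $N_\tau$ is a subgroup of $G$: since every generator $[g,h]$ (with $g\in G$, $h\in N_\sigma$) of $[G,N_\sigma]=\langle[g,h]:g\in G,\,h\in N_\sigma\rangle$ lies in $N_\tau$, the whole subgroup they generate is contained in $N_\tau$, giving $[G,N_\sigma]\leq N_\tau$. There is no real obstacle here; the only thing to be careful about is the direction of the inclusions in the two filters (one has $\sigma\leq\tau$, so $\mathcal V_{(G,\sigma)}(e_G)\subseteq\mathcal V_{(G,\tau)}(e_G)$ and accordingly $N_\sigma\supseteq N_\tau$), and the trivial but crucial observation that $f^{-1}$ disappears because $f=id_G$.
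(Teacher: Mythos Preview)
Your proof is correct and follows essentially the same route as the paper: apply condition (b) of Theorem~\ref{semitop} to $id_G$ to get $[g,V_g]\subseteq U$ with $V_g\in\mathcal V_{(G,\sigma)}(e_G)$, use $N_\sigma\subseteq V_g$ to deduce $[g,N_\sigma]\subseteq U$ for every $U\in\mathcal V_{(G,\tau)}(e_G)$, and intersect over $U$ to land in $N_\tau$. You simply spell out a bit more detail (the description of $N_\rho$ as the intersection of the neighborhood filter and the passage from generators to the generated subgroup).
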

\begin{proof}
By Theorem \ref{semitop}, for every $U\in\mathcal V_{(G,\tau)}(e_G)$ and every $g\in G$, there exists $V_g\in\mathcal V_{(G,\tau)}(e_G)$ such that $[g,V_g]\subseteq U$. Consequently $[g,N_\sigma]\subseteq U$ for every $g\in G$, so $[g,N_\sigma]\subseteq N_\tau$ for every $g\in G$ and hence $[G,N_\sigma]\leq N_\tau$.
\end{proof}

\begin{cor}\label{hausdorff}
Let $G$ be a group and $\tau$ a group topology on $G$. If $\tau$ is Hausdorff, then $id_G:(G,\tau) \to (G,\iota_G)$ is semitopological if and only if $G$ is abelian.
\end{cor}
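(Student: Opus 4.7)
The plan is to apply Theorem \ref{ind}(b) directly. That theorem tells us that $id_G:(G,\tau)\to(G,\iota_G)$ is semitopological if and only if $G'\leq N_\tau$, where $N_\tau = \overline{\{e_G\}}^\tau$.

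The Hausdorff hypothesis does exactly one thing for us: it forces $N_\tau = \{e_G\}$, since in a Hausdorff group singletons are closed. Substituting this into the criterion from Theorem \ref{ind}(b), the semitopological condition becomes $G' \leq \{e_G\}$, i.e., $G' = \{e_G\}$, which is precisely the statement that $G$ is abelian.

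Conversely, if $G$ is abelian, then $G' = \{e_G\} \leq N_\tau$ trivially (regardless of Hausdorffness), and Theorem \ref{ind}(b) yields that $id_G:(G,\tau)\to(G,\iota_G)$ is semitopological. Both directions follow at once, and no real obstacle arises — the corollary is essentially a one-line consequence of Theorem \ref{ind}(b) combined with the Hausdorff $T_1$ fact $\overline{\{e_G\}}^\tau = \{e_G\}$.
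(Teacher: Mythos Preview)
Your proof is correct and essentially the same as the paper's: both reduce the forward direction to $G'\leq N_\tau=\{e_G\}$ (you via Theorem~\ref{ind}(b), the paper via Lemma~\ref{G,N<N} with $N_{\iota_G}=G$), and the converse is immediate either way.
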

\begin{proof}
If $id_G:(G,\tau) \to (G,\iota_G)$ is semitopological, by Lemma \ref{G,N<N} $G'\leq N_\tau=\{e_G\}$ and hence $G$ is abelian. If $G$ is abelian every continuous isomorphism is semitopological.
\end{proof}

In particular $id_G:(G,\delta_G) \to (G,\iota_G)$ is semitopological if and only if the group $G$ is abelian.

\begin{prop}\label{t2,z=1,t2}
Let $G$ be a group and $\sigma\leq \tau$ group topologies on $G$, such that $id_G:(G,\tau)\to(H,\sigma)$ is semitopological. If $Z(G)=\{e_G\}$ and $\tau$ is Hausdorff, then $\sigma$ is Hausdorff as well.
\end{prop}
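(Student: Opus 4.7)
The plan is to invoke Lemma \ref{G,N<N} and use the triviality of the center to force $N_\sigma$ to be trivial.

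First I would observe that for a topological group, being Hausdorff is equivalent to $N_\tau = \overline{\{e_G\}}^\tau = \{e_G\}$ (the standard $T_0 \Leftrightarrow T_2$ equivalence for topological groups). So the hypothesis ``$\tau$ is Hausdorff'' translates to $N_\tau = \{e_G\}$, and the goal ``$\sigma$ is Hausdorff'' translates to $N_\sigma = \{e_G\}$.

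Next I would apply Lemma \ref{G,N<N} to the semitopological isomorphism $id_G:(G,\tau)\to(G,\sigma)$, which yields $[G,N_\sigma]\leq N_\tau$. Combining with $N_\tau=\{e_G\}$ gives $[G,N_\sigma]=\{e_G\}$, i.e., every element of $N_\sigma$ commutes with every element of $G$. Hence $N_\sigma\leq Z(G)$.

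Finally, since $Z(G)=\{e_G\}$ by hypothesis, we conclude $N_\sigma=\{e_G\}$, which is exactly the Hausdorffness of $\sigma$. There is no genuine obstacle here: once Lemma \ref{G,N<N} is in place, the argument is a one-line combination of $N_\tau=\{e_G\}$ with $Z(G)=\{e_G\}$.
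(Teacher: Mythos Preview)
Your proof is correct and follows essentially the same approach as the paper: apply Lemma \ref{G,N<N} to get $[G,N_\sigma]\leq N_\tau=\{e_G\}$, deduce $N_\sigma\leq Z(G)=\{e_G\}$, and conclude that $\sigma$ is Hausdorff. You simply spell out the intermediate step $N_\sigma\leq Z(G)$ and the Hausdorff $\Leftrightarrow N_\tau=\{e_G\}$ equivalence more explicitly than the paper does.
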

\begin{proof}
Since $N_\tau=\{e_G\}$ and $[G,N_\sigma]\leq N_\tau$ by Lemma \ref{G,N<N}, using the hypothesis $Z(G)=\{e_G\}$ we conclude that $N_\sigma=\{e_G\}$.
\end{proof}

\section{Almost trivial topologies}\label{at-sec}

In this section we introduce a class of group topologies containing the trivial ones and with nice stability properties; moreover we extend Theorem \ref{discrete} to this class.

\begin{defn}\cite[Definition 5.13]{AGB}\label{at-def}
A topological group $(G,\tau)$ is \emph{almost trivial} if $N_\tau$ is open in $(G,\tau)$.
\end{defn}

Since in this case $\tau$ is completely determined by the normal subgroup $N:=N_\tau$ of $G$, we denote an almost trivial topology on $G$ by $\zeta_N$, underling the role of the normal subgroup.

\smallskip
Every group topology on a finite group is almost trivial and every almost trivial group is SIN.

For example, for a group $G$, the discrete and the indiscrete topologies (i.e., the so-called trivial topologies) are almost trivial, with $\delta_G=\zeta_{\{e_G\}}$ and $\iota_G=\zeta_G$. This justifies the term used in Definition \ref{at-def}.

\begin{lem}\label{simplenonabrem}
Let $G$ be a simple non-abelian group and let $\tau$ be a group topology on $G$. Then either $N_\tau=G$ or $N_\tau=\{e_G\}$, that is, either $\tau=\iota_G$ or $\tau$ is Hausdorff, respectively. If $\tau$ is almost trivial, then $\tau$ is either discrete or indiscrete.
\end{lem}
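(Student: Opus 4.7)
The plan is to exploit the fact that $N_\tau = \overline{\{e_G\}}^\tau$ is always a closed normal subgroup of the topological group $(G,\tau)$. Indeed, in any topological group $\overline{\{e_G\}}^\tau = \bigcap\{U:U\in\mathcal V_{(G,\tau)}(e_G)\}$, and this set is a subgroup by continuity of multiplication and inversion, and normal by continuity of conjugation. Since $G$ is simple, the only normal subgroups are $\{e_G\}$ and $G$, so $N_\tau\in\{\{e_G\},G\}$.

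The first dichotomy follows directly. If $N_\tau=G$, then every $\tau$-neighborhood of $e_G$ contains all of $G$, whence the only $\tau$-open set containing $e_G$ is $G$ itself, so $\tau=\iota_G$. If instead $N_\tau=\{e_G\}$, then $\{e_G\}$ is $\tau$-closed, so $\tau$ is $T_1$; since a $T_0$ (equivalently $T_1$) topological group is automatically Hausdorff, $\tau$ is Hausdorff. This proves the first claim of the lemma.

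For the second claim, assume moreover that $\tau$ is almost trivial, i.e., $N_\tau$ is $\tau$-open. Applying the same dichotomy: either $N_\tau=G$, in which case $\tau=\iota_G$ is indiscrete, or $N_\tau=\{e_G\}$ is itself $\tau$-open. In the latter case, left translation by any $g\in G$ is a $\tau$-homeomorphism carrying $\{e_G\}$ to $\{g\}$, so every singleton is $\tau$-open and hence $\tau=\delta_G$ is discrete.

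No significant obstacle is expected, as every step rests on standard and elementary features of topological groups combined with the simplicity of $G$. I note that the non-abelian hypothesis plays no role in the argument itself; it merely places the lemma in the context of the paper, since the only abelian simple groups are the finite $\mathbb Z/p\mathbb Z$, for which the conclusion is immediate anyway.
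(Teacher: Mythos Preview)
Your proof is correct. The paper actually states this lemma without proof, treating it as an immediate consequence of the fact that $N_\tau$ is a normal subgroup of $G$; your argument spells out exactly this reasoning and is the natural one.
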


The almost trivial topologies help also to express in simple terms topological properties:

\begin{rem}
Given a topological group $(G,\tau)$ and a normal subgroup $N$ of $G$, it is possible to consider the group topology obtained ``adding'' to the open neighborhoods also $N$ (since it is normal, it suffices to add $N$ to the prebase of the neighborhoods and all the intersections $U \cap N$, with $U \in \mathcal V_{(G,\tau)}(e_G)$, give the neighborhoods of $e_G$ in the new topology). This new topology is $\sup\{\tau,\zeta_N\}$.

\smallskip
For example, if $G$ is a group and $\tau$ its profinite topology, with $\mathcal B_\tau=\{N_\alpha\}_\alpha$, where the $N_\alpha$ are all the normal subgroups of $G$ of finite index, then $\tau=\sup_\alpha\zeta_{N_\alpha}$.
More in general, if $\tau$ is a linear topology on $G$, that is $\mathcal B_\tau=\{N_\alpha\}_\alpha$, where $N_\alpha$ are normal subgroups of $G$, then $\tau=\sup_\alpha\zeta_{N_\alpha}$.
\end{rem}

If $(G,\tau)$ is a topological group, let $\bar{\tau}$ denote the quotient topology of $(G,\tau)$ with respect to the normal subgroup $N_\tau$, which is indiscrete. Then $\bar{\tau}$ is Hausdorff.
Moreover $(G,\tau)$ is almost trivial if and only if $(G/N_\tau,\bar\tau)$ is discrete.

\medskip
Analogously it is possible to consider the case when a topological group $(G,\tau)$ has a discrete normal subgroup $D$ such that $(G/D,\tau_q)$ is indiscrete. For groups with this property we have a strong consequence:

\begin{lem}\label{case1}
Let $(G,\tau)$ be a topological group such that $D$ is a discrete normal subgroup of $(G,\tau)$ and $(G/D,\tau_q)$ is indiscrete. Then $(G,\tau)\cong D\times N_\tau$, where $D$ is discrete and $N_\tau$ is indiscrete. In particular $\tau$ is almost trivial.
\end{lem}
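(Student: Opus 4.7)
The plan is to extract from the hypotheses a canonical local cross-section through $e_G$ and then upgrade it to a global direct-product decomposition. Since $D$ is discrete, I would first pick an open neighborhood $U$ of $e_G$ with $U\cap D=\{e_G\}$ and then a symmetric open neighborhood $V$ of $e_G$ with $V^2\subseteq U$, so that $V^2\cap D=\{e_G\}$ and in particular $V\cap D=\{e_G\}$. Because $(G/D,\tau_q)$ is indiscrete and the canonical projection $\pi\colon G\to G/D$ is open, the non-empty open set $\pi(V)$ must equal $G/D$; equivalently $VD=G$. Combined with $V^2\cap D=\{e_G\}$, this gives for every $g\in G$ a unique factorization $g=v(g)\,d(g)$ with $v(g)\in V$ and $d(g)\in D$.

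The key step is to show that $v(g)\in N_\tau$ for every $g\in G$. For an arbitrary $\tau$-open neighborhood $W$ of $e_G$, the same argument applied to $W\cap V$ (which is again symmetric open, contained in $V$, and satisfies $(W\cap V)^2\cap D=\{e_G\}$) yields $(W\cap V)\cdot D=G$, and uniqueness of the factorization forces $v(g)\in W\cap V\subseteq W$. Letting $W$ run through all $\tau$-open neighborhoods of $e_G$ gives $v(g)\in N_\tau$, so in particular $G=N_\tau\cdot D$. Moreover $v$ is the identity on $V$ (since for $w\in V$ the factorization $w=w\cdot e_G$ is the unique one), so $V=v(V)\subseteq N_\tau$; the reverse inclusion $N_\tau\subseteq V$ is automatic because $V$ is an open neighborhood of $e_G$. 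Hence $V=N_\tau$, which in particular shows that $N_\tau$ is $\tau$-open and therefore $\tau$ is almost trivial.

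It remains to check that the multiplication map $m\colon N_\tau\times D\to(G,\tau)$, $(n,d)\mapsto nd$, is an isomorphism of topological groups. For each $d\in D$, the continuous conjugation map $G\to D$, $g\mapsto gdg^{-1}$, has open fiber $c_G(d)$ over the isolated point $d\in D$, and this forces $N_\tau\subseteq c_G(d)$. Hence $N_\tau$ centralizes $D$, so $m$ is a group homomorphism; it is bijective because $N_\tau\cdot D=G$ and $N_\tau\cap D\subseteq V\cap D=\{e_G\}$, and it is continuous as a restriction of the multiplication of $G$. Its inverse $g\mapsto(v(g),d(g))$ is continuous because the preimage of a basic open set $A\times\{d_0\}$ (with $A$ open in the now-open subgroup $N_\tau$, hence open in $G$) is the open set $Ad_0$. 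Finally, $N_\tau$ carries the indiscrete topology as a subspace of $(G,\tau)$: every $\tau$-open neighborhood of $e_G$ contains $N_\tau$ by definition, and by the homogeneity of topological groups the only relatively open subset of $N_\tau$ containing any given point is $N_\tau$ itself. The main obstacle is the pinning step of the second paragraph, where one must simultaneously exploit uniqueness of the factorization and the freedom, afforded by the indiscreteness of $G/D$, to shrink $V$ inside any prescribed neighborhood $W$ of $e_G$ while retaining full coverage of $G$ by $D$-translates.
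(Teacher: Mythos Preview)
Your argument is correct. The only cosmetic slip is the parenthetical claim that $W\cap V$ is ``again symmetric'' for an arbitrary open neighborhood $W$ of $e_G$; this need not hold, but symmetry is not actually used in that paragraph: you only need that $W\cap V$ is an open neighborhood of $e_G$ (so that $(W\cap V)D=G$ by indiscreteness of $G/D$) and that $W\cap V\subseteq V$ (so that any factorization through $W\cap V$ agrees with the one through $V$ by the uniqueness already established for $V$, which did use that $V$ is symmetric).

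Compared with the paper's proof, you reach the identification $V=N_\tau$ by a slightly different route. The paper chooses a symmetric neighborhood $W$ with $W^3\cap D=\{e_G\}$ and shows in one line that $W$ is closed under multiplication (if $w_1w_2=dw$ with $d\in D$ and $w\in W$, then $d=w_1w_2w^{-1}\in W^3\cap D=\{e_G\}$); once $W$ is an open subgroup meeting $D$ trivially, the restricted projection gives a topological isomorphism $W\cong G/D$, so $W$ is indiscrete and hence $W=N_\tau$. Your shrinking argument instead pins each $v(g)$ inside every neighborhood of $e_G$ by re-running the factorization inside $W\cap V$; this is a touch more indirect but uses only $V^2\cap D=\{e_G\}$ rather than a cube. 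The paper's computation is slightly slicker in getting the subgroup property, while your write-up has the merit of spelling out why $N_\tau$ centralizes $D$ (via openness of $c_G(d)$) and why the inverse of $m$ is continuous, both of which the paper leaves implicit.
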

\begin{proof}
Pick a symmetric neighborhood $W$ of $e_G$ in $G$ such that $W^3\cap D = \{e_G\}$. 
Since $(G/D,\tau_q)$ is indiscrete, $D$ is dense in $G$, so $G=D W$. Let $w_1,w_2 \in W$. Then there exists $d\in D$ such that $w_1 w_2 \in d W$. 
Let $w_1 w_2=d w$ for some $w\in W$. Then $d=w_1 w_2 w^{-1}\in W^3\cap D = \{e_G\}$. So $w_1 w_2=w\in W$. Since $W$ is symmetric, this proves that $W$ is an open subgroup of $M$ with $W\cap D=\{e_G\}$. Hence the restriction of the canonical projection $G \to G/D$ to $W$ gives a topological isomorphism $W \cong (G/D,\tau_q)$. This shows that $W$ is an indiscrete group. Since $N_\tau\leq W$ is closed, we deduce that $W=N_\tau$. This proves that $N_\tau$ is open in $\tau$ and that $(G,\tau)\cong D\times N_\tau$.
\end{proof}

\subsection{Permanence properties of the almost trivial topologies}

The assignment $N\mapsto \zeta_N$ defines an order reversing bijection between the complete lattice $\mathcal N(G)$ of all normal subgroups of a group $G$ and the complete lattice $\mathcal{AT}(G)$ of all almost trivial group topologies on $G$. Let us note that the complete lattice $\mathcal{AT}(G)$ is not a sublattice of the complete lattice ${\mathcal T}(G)$ of {\em all} group topologies on $G$. Indeed, the meet of a family $\{\zeta_{N_i}:i\in I\}$ in $\mathcal{AT}(G)$ is simply $\zeta_{\bigcap_{i\in I} N_i}$, whereas the meet of a family
$\{\zeta_{N_i}:i\in I\}$ in ${\mathcal T}(G)$ is the group topology having as prebase of the neighborhoods at $e_G$
the family $\{\zeta_{N_i}:i\in I\}$ (in other words, the latter topology may be strictly weaker than the former one in case $I$ is infinite). 

The next lemma shows, among others, that the class of almost trivial groups is closed under taking subgroups and quotients.

\begin{lem}\label{almdiscrPP}
Let $(G,\zeta_N)$ be an almost trivial group, where $N$ is a normal subgroup of $G$. 
\begin{itemize}
  \item[(a)] For every subgroup $H$ of $G$:
	\begin{itemize}
  	\item[(a$_1$)] the topology induced on $H$ by $\zeta_N$ is almost trivial and coincides with $\zeta_{H\cap N}$; 
  	\item[(a$_2$)] the following conditions are equivalent: (i) $H$ is $\zeta_N$-open; 
   (ii) $H$ is $ \zeta_N$-closed; (iii) $H \geq N$.
	\end{itemize}
  \item[(b)] For every normal subgroup $N_0$ of $G$ the quotient topology of $\zeta_N$ on $G/N_0$ is almost trivial and coincides with $\zeta_{N_0N/N_0}$. 
\end{itemize}
\end{lem}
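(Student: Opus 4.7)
The plan is to exploit the explicit unpacking of an almost trivial topology $\zeta_N$: since $N$ is both open and equal to $\overline{\{e_G\}}^{\zeta_N}$, the $\zeta_N$-open sets are exactly the unions of cosets of $N$, equivalently $\mathcal V_{(G,\zeta_N)}(e_G)=\{U\subseteq G : U\supseteq N\}$. Dually, the $\zeta_N$-closed sets are again unions of cosets of $N$. All three assertions of the lemma then follow as direct translations of this combinatorial description.

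For (a$_1$), I would observe that the neighborhoods of $e_G$ in the induced topology on $H$ are exactly $\{H\cap U : U\supseteq N\}=\{W\subseteq H : W\supseteq H\cap N\}$, which is precisely the neighborhood filter of $e_G$ in the almost trivial topology $\zeta_{H\cap N}$ on $H$. For (a$_2$), the implications (iii) $\Rightarrow$ (i) and (iii) $\Rightarrow$ (ii) hold because $H\supseteq N$ makes both $H$ and its complement a union of $N$-cosets. Conversely, if $H$ is $\zeta_N$-open, then $H$ is a neighborhood of $e_G$, hence $H\supseteq N$; and if $H$ is $\zeta_N$-closed, then from $e_G\in H$ one obtains $H\supseteq \overline{\{e_G\}}^{\zeta_N}=N$.

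For (b), let $\pi\colon G\to G/N_0$ denote the canonical projection. Since $\pi$ is continuous and open, a basic neighborhood of $e_{G/N_0}$ in the quotient topology is $\pi(U)$ with $U\supseteq N$, so every such neighborhood contains $\pi(N)=NN_0/N_0$. Conversely $\pi(N)=NN_0/N_0$ is itself a neighborhood of $e_{G/N_0}$, so the neighborhood filter of the quotient coincides with that of $\zeta_{NN_0/N_0}$. To close the argument, one identifies $\overline{\{e_{G/N_0}\}}$ in the quotient with $\pi(\overline{N_0}^{\zeta_N})$ and computes $\overline{N_0}^{\zeta_N}=N_0 N$ using that $\zeta_N$-closed sets are unions of $N$-cosets; this confirms that $N_{\tau_q}=NN_0/N_0$ is open in the quotient, so the quotient is almost trivial and equals $\zeta_{NN_0/N_0}$.

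The whole argument is bookkeeping with the coset description of $\zeta_N$, with no real obstacle. The only place requiring a small computation is the closure step in (b), where one verifies $\overline{N_0}^{\zeta_N}=N_0 N$ from the explicit form of closed sets and checks that its image under $\pi$ is simultaneously the open neighborhood $NN_0/N_0$ and the kernel subgroup of the induced almost trivial topology.
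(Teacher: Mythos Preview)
Your argument is correct. The paper states this lemma without proof, treating it as an immediate consequence of the definition of $\zeta_N$; your write-up supplies exactly the routine verification the paper omits, via the coset description of the open (and closed) sets of $\zeta_N$. One minor remark on part (b): once you have established that the neighborhood filter of $e_{G/N_0}$ in the quotient is $\{V\subseteq G/N_0 : V\supseteq NN_0/N_0\}$, it already follows that $NN_0/N_0$ is the smallest neighborhood of the identity, hence simultaneously open and equal to $\bigcap\mathcal V_{(G/N_0,\tau_q)}(e_{G/N_0})=\overline{\{e_{G/N_0}\}}$; the separate closure computation via $\overline{N_0}^{\zeta_N}=N_0N$ is a valid confirmation but not strictly needed.
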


\begin{rem}\label{almdiscrSG}
In connection to item (a$_1$) of the previous lemma notice that if $H$ is an open subgroup of a topological group $G$ and $H$ is almost trivial, then also $G$ is almost trivial.
\end{rem}

Now we show that the class of almost trivial groups is stable also with respect to taking finite products.

\begin{lem}
Let $G_1$, $G_2$ be groups and $N_1$, $N_2$ normal subgroups of $G_1$, $G_2$ respectively. Then $\zeta_{N_1}\times \zeta_{N_2}=\zeta_{N_1\times N_2}$ on $G_1\times G_2$. 
\end{lem}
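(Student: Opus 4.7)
The plan is to show both topologies have the same filter of neighborhoods at the identity of $G_1 \times G_2$, by exhibiting a minimum element in each.

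First I would recall the structure of an almost trivial topology. If $\zeta_N$ is almost trivial on a group $G$, then $N = N_{\zeta_N} = \overline{\{e_G\}}^{\zeta_N}$ is simultaneously open (by Definition \ref{at-def}) and contained in every neighborhood of $e_G$ (since $N$ is the intersection of all such neighborhoods, being the closure of $\{e_G\}$). Hence $N$ is itself a neighborhood of $e_G$ which is contained in every element of $\mathcal V_{(G,\zeta_N)}(e_G)$, so the neighborhood filter of $e_G$ is precisely the family of supersets of $N$ in $G$, i.e.\ $\{N\}$ is a base for $\mathcal V_{(G,\zeta_N)}(e_G)$.

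Next I would apply this to both sides of the claimed equality. The subgroup $N_1 \times N_2$ is normal in $G_1 \times G_2$, so $\zeta_{N_1 \times N_2}$ is a well-defined almost trivial topology and, by the remark above, $\{N_1 \times N_2\}$ is a base of neighborhoods of $(e_{G_1},e_{G_2})$ for $\zeta_{N_1\times N_2}$. On the other hand, a base of neighborhoods of $(e_{G_1},e_{G_2})$ for the product topology $\zeta_{N_1}\times \zeta_{N_2}$ is given by products $U_1 \times U_2$ with $U_i \in \mathcal V_{(G_i,\zeta_{N_i})}(e_{G_i})$; taking $U_i = N_i$ gives the neighborhood $N_1 \times N_2$, and every such $U_1 \times U_2$ contains $N_1 \times N_2$ by the first step. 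Hence $\{N_1\times N_2\}$ is a base of neighborhoods of $(e_{G_1}, e_{G_2})$ for $\zeta_{N_1}\times \zeta_{N_2}$ as well.

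Since the two group topologies share a common neighborhood base at the identity, they coincide. There is no real obstacle here; the only point to be careful about is the double role of $N$ in the definition of $\zeta_N$ (open and equal to the closure of $\{e_G\}$), which forces $N$ to be a minimum neighborhood of $e_G$ and thereby collapses both topologies to the filter of supersets of $N_1 \times N_2$.
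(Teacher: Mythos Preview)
Your proof is correct. The paper states this lemma without proof, presumably regarding it as an immediate consequence of the definitions; your argument makes explicit precisely the routine verification the authors omit, namely that $\{N\}$ is a local base at $e_G$ for $\zeta_N$ and hence $\{N_1\times N_2\}$ serves as a common local base for both topologies.
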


The next lemma follows directly from the definitions. 

\begin{lem}\label{almdiscrIndQ}
Let $G$ be a topological group and $N$ an indiscrete normal subgroup of $G$ such that $G/N$ is almost trivial. Then $G$ is almost trivial. 
\end{lem}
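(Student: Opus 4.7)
The plan is to let $\pi\colon G \to G/N$ denote the canonical projection with its quotient topology $\tau_q$, and reduce everything to the identity $\pi^{-1}(N_{\tau_q}) = N_\tau$. Once this is established, almost triviality of $G/N$ says that $N_{\tau_q}$ is $\tau_q$-open, so by continuity of $\pi$ the set $N_\tau = \pi^{-1}(N_{\tau_q})$ is $\tau$-open, which is precisely the almost triviality of $G$.

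The first preparatory observation is that $N \leq N_\tau$. Indeed, $N$ being indiscrete as a subspace of $G$ means that for every $U \in \mathcal V_{(G,\tau)}(e_G)$, the open set $U \cap N$ of $N$ is nonempty (it contains $e_G$), hence coincides with $N$; so $N \subseteq U$, and therefore $N \subseteq \bigcap \mathcal V_{(G,\tau)}(e_G) = N_\tau$. In particular, $\pi(N_\tau) = N_\tau/N$ and $\pi^{-1}(\pi(N_\tau)) = N_\tau$.

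Next I would check the equality $\pi^{-1}(N_{\tau_q}) = N_\tau$. The inclusion $N_\tau \subseteq \pi^{-1}(N_{\tau_q})$ is the standard fact that continuous maps send closures into closures: $\pi(N_\tau) = \pi(\overline{\{e_G\}}^\tau) \subseteq \overline{\{e_{G/N}\}}^{\tau_q} = N_{\tau_q}$. For the reverse inclusion, take $x \in G$ with $xN \in N_{\tau_q}$ and fix an arbitrary symmetric $V \in \mathcal V_{(G,\tau)}(e_G)$; choose a symmetric $W \in \mathcal V_{(G,\tau)}(e_G)$ with $W^2 \subseteq V$. Since $\pi$ is open, $\pi(W)$ is a neighborhood of $e_{G/N}$, and since $xN \in \overline{\{e_{G/N}\}}^{\tau_q}$ we deduce $xN \in \pi(W)$, i.e., $x \in WN$. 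Using $N \leq N_\tau \subseteq W$ from the previous paragraph yields $x \in W \cdot W \subseteq V$. As $V$ was arbitrary, $x \in \bigcap \mathcal V_{(G,\tau)}(e_G) = N_\tau$, completing the equality.

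I do not expect a serious obstacle: the only mildly subtle point is to arrange the inclusion $N \subseteq W$ inside the argument (handled by passing to a square-small $W$), which is what bridges the asymmetry between $WN$ and $W$. Everything else is a routine consequence of continuity and openness of $\pi$.
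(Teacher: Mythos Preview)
Your proof is correct. The paper does not give a proof of this lemma, merely stating that it ``follows directly from the definitions''; your argument spells out precisely those details, and the key identity $\pi^{-1}(N_{\tau_q})=N_\tau$ (valid because $N$ indiscrete forces $N\leq N_\tau$) is exactly the point that makes the claim immediate.
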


We want to generalize this lemma and we need the following concept.

\begin{defn}
For a class of topological groups ${\mathcal P}$ one says that ${\mathcal P}$ has {\em the three space property}, if a topological group $G$ belongs to ${\mathcal P}$ whenever $N\in {\mathcal P}$ and $G/N\in {\mathcal P}$ for some normal subgroup $N$ of $G$.
\end{defn}

For example the class of all discrete groups and the class of all indiscrete groups have the three space property. So the next result shows that the class of all almost trivial groups is the smaller class with the three space property containing all discrete and all indiscrete groups.

\begin{prop}\label{3space-at}
The class of almost trivial groups has the three space property.
\end{prop}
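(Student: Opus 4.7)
The plan is to show that $N_\tau$ is open in $(G,\tau)$ under the assumptions that the normal subgroup $N$ is almost trivial in the subspace topology and $G/N$ is almost trivial in the quotient topology. Write $\pi\colon G\to G/N_\tau$ for the Hausdorff reflection. I rely on two standard facts: every open neighborhood $V$ of $e_G$ is $N_\tau$-saturated, i.e.\ $V\cdot N_\tau=V$ (because $N_\tau$ equals the intersection of all open neighborhoods of $e_G$); and every discrete subgroup of a Hausdorff topological group is closed. The hypothesis that $N$ is almost trivial gives an open neighborhood $V$ of $e_G$ with $V\cap N\subseteq N_\tau$. The hypothesis that $G/N$ is almost trivial says that the closure of $\{eN\}$ in $G/N$ is open; since the preimage of this set under $G\to G/N$ coincides with $\overline{N}^\tau$, this is equivalent to $\overline{N}^\tau$ being open in $G$, yielding an open neighborhood $U$ of $e_G$ with $U\subseteq\overline{N}^\tau$.

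The crucial intermediate step is to prove that $\pi(N)=NN_\tau/N_\tau$ is a discrete subgroup of the Hausdorff group $G/N_\tau$. Indeed, if $y\in\pi(V)\cap\pi(N)$, write $y=\pi(v)=\pi(n)$ for some $v\in V$ and $n\in N$; then $n=vm$ for some $m\in N_\tau$, and by saturation $n\in V\cdot N_\tau=V$, hence $n\in V\cap N\subseteq N_\tau$, so $y=\pi(e_G)$. Thus $\pi(N)$ has $\{\pi(e_G)\}$ as an open point, making it discrete, and by the second standing fact it is closed in $G/N_\tau$. Continuity of $\pi$ then yields $\pi(\overline{N}^\tau)\subseteq\overline{\pi(N)}=\pi(N)$, so $\overline{N}^\tau\subseteq \pi^{-1}(\pi(N))=N\cdot N_\tau$.

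Finally set $W=U\cap V$. For $w\in W\subseteq NN_\tau$, write $w=nm$ with $n\in N$ and $m\in N_\tau$; then $n=wm^{-1}\in V\cdot N_\tau=V$, so $n\in V\cap N\subseteq N_\tau$ and therefore $w=nm\in N_\tau$. Thus $W\subseteq N_\tau$, which makes $N_\tau$ a neighborhood of $e_G$, hence open, proving $G$ almost trivial. The conceptual difficulty to overcome is that in general $\overline{N}^\tau$ can be strictly larger than $N\cdot N_\tau$ (for example, $N=\mathbb Q\subset G=\mathbb R$); the almost triviality of $N$ is precisely what forces $\pi(N)$ to be discrete, hence closed, thereby restoring the equality $\overline{N}^\tau=N\cdot N_\tau$ that makes the factorization $w=nm$ available for the closing calculation.
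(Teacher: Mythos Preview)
Your proof is correct and takes a genuinely different route from the paper's. The paper first passes to the open subgroup $M=\overline{N}^\tau$ (so that $M/N$ is indiscrete), then reduces to the special case where $N$ is discrete by quotienting out $N_1=N_\tau\cap N$; in that special case it invokes the structural Lemma~\ref{case1}, which shows that a group with a discrete normal subgroup and indiscrete quotient actually \emph{splits} as a direct product $D\times N_\tau$, and finally it reassembles the general case via Lemma~\ref{almdiscrIndQ}. Your argument is more direct and more elementary: you bypass the splitting lemma entirely by observing that the image $\pi(N)$ in the Hausdorff quotient $G/N_\tau$ is discrete (from $V\cap N\subseteq N_\tau$) and hence closed, which immediately yields $\overline{N}^\tau\subseteq NN_\tau$; the final neighborhood $W=U\cap V$ then lands inside $N_\tau$ by a short saturation computation. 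What the paper's approach buys is the extra structural information encoded in Lemma~\ref{case1}; what your approach buys is a self-contained argument that needs only the standard fact that discrete subgroups of Hausdorff groups are closed, and no case distinction.
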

\begin{proof}
We have to prove that, in case $G$ is a group and $N$ a normal subgroup of $G$, if $\tau$ is a group topology on $G$ such that $(N,\tau\restriction_N)$ and $(G/N,\tau_q)$ are almost trivial, then $(G,\tau)$ is almost trivial.

Let $M$ be the normal subgroup of $G$ containing $N$ such that $M/N=N_{{\tau}_q}$. Then $M/N$ is indiscrete
and open in $G/N$. Consequently, $M$ is open in $(G,\tau)$. To end the proof we need to verify that $M$ is almost trivial (see Remark \ref{almdiscrSG}).

If $\tau\restriction_N$ is Hausdorff, equivalently it is discrete, since it is almost trivial, and by Lemma \ref{case1} $M$ is almost trivial.
So we consider now the general case. The subgroup $N_1:=N_\tau\cap N$ is the closure of $\{e_G\}$ in $M$. Then $N_1$ is a normal subgroup of $N$. Now the normal subgroup $N/N_1$ of the Hausdorff quotient group $M/N_1$ is almost trivial and consequently discrete. Moreover, the quotient $(M/N_1)/(N/N_1)\cong M/N$ is indiscrete. So by the previous case the group $M/N_1$ is almost trivial. Since the group $N_1$ is indiscrete, we can conclude with Lemma \ref{almdiscrIndQ}. 
\end{proof}

\subsection{Semitopological isomorphisms between almost trivial topologies}

Since every almost trivial group is SIN, it is possible to apply Proposition \ref{SINsemitop} instead of Theorem \ref{semitop} to verify if a continuous isomorphism is semitopological. In case the topology on the domain or that on the codomain is almost trivial, the conditions of Theorem \ref{semitop} become simpler:

\begin{prop}\label{sigma-at}
Let $(G,\sigma)$ be a topological group and let $\sigma\leq\tau$ be group topologies on $G$.
\begin{itemize}
	\item[(a)]If $\tau$ is almost trivial, then $id_G:(G,\tau)\to(G,\sigma)$ is semitopological if and only if for every $g\in G$ there exists $V_g\in\mathcal V_{(G,\sigma)}(e_G)$ such that $[g,V_g]\subseteq N_\tau$.
	\item[(b)]If $\sigma$ is almost trivial, then $id_G:(G,\tau) \to (G,\sigma)$ is semitopological if and only if $N_\sigma$ is $U$-thin for every $U\in\mathcal V_{(G,\tau)}(e_G)$ and $[G,N_\sigma]\leq N_\tau$.
\end{itemize}
\end{prop}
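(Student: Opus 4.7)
The plan is to derive both items directly from Theorem \ref{semitop} via the following structural observation: if $\rho=\zeta_M$ is an almost trivial topology on $G$, then $\{M\}$ is a neighborhood base at $e_G$, since $N_\rho=M$ is both open (by the definition of almost trivial) and contained in every neighborhood of $e_G$ (being the $\rho$-closure of $\{e_G\}$). So every universal quantifier over $\mathcal V_{(G,\rho)}(e_G)$ collapses to the single instance $M$, and every existential quantifier is realized by $M$ itself.

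For item (a), I would first invoke the fact (noted in the text) that every almost trivial topology is SIN, so Proposition \ref{SINsemitop} applies to $\tau$. This eliminates the thinness clause of Theorem \ref{semitop} and reduces semitopologicality to the commutator condition $[g,V_g]\subseteq U$ for every $U\in\mathcal V_{(G,\tau)}(e_G)$ and $g\in G$. Specializing to the single base element $U=N_\tau$ gives exactly the stated characterization; the converse is immediate by quantifying over $U\in\mathcal B_\tau=\{N_\tau\}$.

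For item (b), I would apply Theorem \ref{semitop} directly, exploiting that $\mathcal B_\sigma=\{N_\sigma\}$. Clause (a) of Theorem \ref{semitop} asks for some $V\in\mathcal V_{(G,\sigma)}(e_G)$ with $V$ being $U$-thin; since $U$-thinness is monotone under subsets ($M\subseteq M'$ implies $\bigcap_{x\in M}x^{-1}Ux\supseteq\bigcap_{x\in M'}x^{-1}Ux$) and $N_\sigma$ is the smallest open neighborhood, this clause is equivalent to $N_\sigma$ being $U$-thin. Clause (b) asks for $[g,V_g]\subseteq U$ with $V_g\supseteq N_\sigma$, which forces $[g,N_\sigma]\subseteq U$, while sufficiency follows by choosing $V_g=N_\sigma$. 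Quantifying over all $U\in\mathcal V_{(G,\tau)}(e_G)$ and using $\bigcap\mathcal V_{(G,\tau)}(e_G)=N_\tau$ turns this into $[g,N_\sigma]\subseteq N_\tau$ for every $g\in G$, i.e.\ $[G,N_\sigma]\leq N_\tau$ since $N_\tau$ is a subgroup of $G$.

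No step poses a real obstacle; the proposition is essentially a bookkeeping consequence of Theorem \ref{semitop} once the base-reduction principle above is in hand. The only micro-step worth explicit verification is the monotonicity of $U$-thinness under subsets used in (b), which is immediate from the defining inclusion.
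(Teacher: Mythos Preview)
Your proposal is correct and follows essentially the same approach as the paper: part (a) via Proposition~\ref{SINsemitop} (using that almost trivial implies SIN), and part (b) via Theorem~\ref{semitop} with the base $\{N_\sigma\}$; the paper additionally cites Lemma~\ref{G,N<N} for the necessity of $[G,N_\sigma]\leq N_\tau$, but that lemma is itself proved exactly as you do it. Your write-up is in fact more explicit than the paper's, which dispatches each part in a single sentence.
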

\begin{proof}
(a) follows from Proposition \ref{SINsemitop}.

(b) The necessity of the condition that $N_\sigma$ is $U$-thin for every $U\in\mathcal V_{(G,\tau)}(e_G)$ follows from Theorem \ref{semitop}, while the necessity of $[G,N_\sigma]\leq N_\tau$ follows from Lemma \ref{G,N<N}. The sufficiency of the two conditions is a consequence of Theorem \ref{semitop}.
\end{proof}

If $\tau$ is Hausdorff in this proposition, then (b) becomes $N \leq Z(G)$. So we have the following corollary, which can be also seen as a consequence of Proposition \ref{t2,z=1,t2}.

\begin{cor}
Let $G$ be a group. If $\tau$ is a Hausdorff group topology on $G$, then for every non-central $\tau$-open subgroup $N$ of $G$ $id_G: (G,\tau) \to (G,\zeta_N)$ is not semitopological.
\end{cor}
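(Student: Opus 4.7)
The plan is to read the conclusion straight off Lemma~\ref{G,N<N}, which already packages the key commutator obstruction. Since $N$ is a $\tau$-open subgroup, the almost trivial topology $\zeta_N$ on $G$ satisfies $\zeta_N\leq \tau$ (a neighborhood base of $e_G$ for $\zeta_N$ is $\{N\}$, and $N\in\mathcal V_{(G,\tau)}(e_G)$ by hypothesis). So the hypothesis of Lemma~\ref{G,N<N} is meaningful for the pair $\sigma=\zeta_N\leq\tau$.

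Now I would argue by contraposition. Suppose $id_G:(G,\tau)\to(G,\zeta_N)$ is semitopological. By Lemma~\ref{G,N<N} we obtain
\[
[G,N_{\zeta_N}]\leq N_\tau.
\]
By the very definition of an almost trivial topology, $N_{\zeta_N}=N$. Since $\tau$ is Hausdorff, $N_\tau=\{e_G\}$. Combining these gives $[G,N]=\{e_G\}$, which is exactly the statement $N\leq Z(G)$. This contradicts the assumption that $N$ is non-central, and proves the corollary.

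A fully equivalent route, matching the remark just before the statement, is to apply Proposition~\ref{sigma-at}(b) directly: it requires both that $N=N_\sigma$ be $U$-thin for every $U\in\mathcal V_{(G,\tau)}(e_G)$ and that $[G,N]\leq N_\tau$. Under the Hausdorffness of $\tau$ the second condition collapses to $N\leq Z(G)$, which fails by hypothesis, so the characterization in Proposition~\ref{sigma-at}(b) is violated. There is no real obstacle here: once one recognizes that both Lemma~\ref{G,N<N} and Proposition~\ref{sigma-at}(b) have already isolated the commutator condition $[G,N_\sigma]\leq N_\tau$ as a necessary feature of semitopological isomorphisms, the corollary is a one-line specialization (the only mild subtlety being to notice $\zeta_N\leq\tau$, which uses the openness of $N$).
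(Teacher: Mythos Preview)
Your proof is correct and follows essentially the same route as the paper: the paper derives the corollary from Proposition~\ref{sigma-at}(b) by observing that under Hausdorffness of $\tau$ the condition $[G,N_\sigma]\leq N_\tau$ collapses to $N\leq Z(G)$, and you obtain the identical obstruction via Lemma~\ref{G,N<N} (and then note the Proposition~\ref{sigma-at}(b) variant yourself). The only extra detail you supply beyond the paper's one-line derivation is the verification that $\zeta_N\leq\tau$, which is indeed needed and correct.
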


Combining together the two items of Proposition \ref{sigma-at} we have precisely the following corollary, which is the ``almost trivial version'' of Theorem \ref{semitop}. Furthermore it shows that the necessary condition of Lemma \ref{G,N<N} becomes also sufficient in the case of almost trivial topologies.

\begin{cor}\label{almdiscrsemitop}\emph{\cite[Lemma 5.15]{AGB}}
Let $G$ be a group and $\zeta_N\geq\zeta_L$ almost trivial group topologies on $G$. Then $id_G:(G,\zeta_N)\to(G,\zeta_L)$ is semitopological if and only if $[G,L]\leq N$.
\end{cor}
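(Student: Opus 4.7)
The plan is to apply Proposition~\ref{sigma-at} directly, exploiting that both $\zeta_N$ and $\zeta_L$ are almost trivial. First I would record the trivial but crucial fact that for any normal subgroup $M$ of $G$, the filter $\mathcal V_{(G,\zeta_M)}(e_G)$ consists of exactly the supersets of $M$. Consequently the hypothesis $\zeta_N \geq \zeta_L$ says simply $N \leq L$, and $N_{\zeta_N}=N$, $N_{\zeta_L}=L$.

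Next I would apply Proposition~\ref{sigma-at}(a) with $\tau=\zeta_N$ and $\sigma=\zeta_L$: the map $id_G:(G,\zeta_N)\to(G,\zeta_L)$ is semitopological if and only if for every $g\in G$ there exists $V_g\in\mathcal V_{(G,\zeta_L)}(e_G)$ with $[g,V_g]\subseteq N_{\zeta_N}=N$. Since $L$ itself is the smallest such neighborhood and any other $V_g$ contains $L$, this collapses to the requirement $[g,L]\subseteq N$ for every $g\in G$, which is exactly $[G,L]\leq N$.

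As a cross-check (and an alternative route), Proposition~\ref{sigma-at}(b) applied in the same situation gives two conditions: $N_{\zeta_L}=L$ is $U$-thin for every $U\in\mathcal V_{(G,\zeta_N)}(e_G)$, and $[G,L]\leq N$. The first of these is automatic, because any such $U$ contains $N$, and taking the witnessing neighborhood $U_1=N$ we have $xNx^{-1}=N\subseteq U$ for every $x\in L$ (in fact for every $x\in G$) by normality of $N$. So the two routes yield the same answer, namely $[G,L]\leq N$.

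I don't anticipate any real obstacle: the whole argument hinges on the observation that in an almost trivial topology the neighborhood filter of $e_G$ is generated by a single normal subgroup, so the existential quantifier in Proposition~\ref{sigma-at}(a) is realized by $L$ itself, and the thinness clause in (b) is trivially satisfied by the same normality. The only step requiring a moment's thought is the translation between $\zeta_N\geq\zeta_L$ and the inclusion $N\leq L$, which is opposite to what one might first guess.
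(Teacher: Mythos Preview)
Your proof is correct and follows essentially the same approach as the paper, which presents this corollary as an immediate consequence of combining the two items of Proposition~\ref{sigma-at}. Your explicit unpacking of why the thinness condition is automatic and why the existential over $V_g$ collapses to $L$ is a faithful expansion of that one-line reduction.
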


The next example is a consequence of this corollary.

\begin{exmp}\label{almdiscrex}
Let $G$ be a group and $\zeta_N$ an almost trivial group topology on $G$. Consider $$(G,\delta_G)\xrightarrow{id_G}(G,\zeta_N)\xrightarrow{id_G}(G,\iota_G).$$
Then:
\begin{itemize}
\item[(a)]$id_G:(G,\delta_G)\to(G,\zeta_N)$ is semitopological if and only if $N\leq Z(G)$;
\item[(b)]$id_G:(G,\zeta_N)\to(G,\iota_G)$ is semitopological if and only if $G'\leq N$.
\end{itemize}
\end{exmp}

On a group $G$ it is possible to consider the almost trivial topology generated by $G'$, that is $\zeta_{G'}$. A group $G$ is \emph{perfect} if $G=G'$, and $G$ is perfect if and only if $\zeta_{G'}=\iota_G$.

\begin{rem}\label{ind-remark}
With this topology generated by the derived group, we can write again Theorem \ref{ind}(b) as:
\begin{quote}
Let $G$ be a group and $\tau$ a group topology on $G$. Then $id_G:(G,\tau)\to(G,\iota_G)$ is semitopological if and only if $\tau\leq\zeta_{G'}$.
\end{quote}
\end{rem}

\begin{rem}
Let $N$ be a normal subgroup of a group $G$ and let $\zeta_N$ be the respective almost trivial topology on $G$.

Let $\tau$ be a group topology on $G$. Then $id_G:(G,\zeta_{N_\tau})\to (G,\tau)$ is continuous. Moreover, if $\zeta_L$ is another almost trivial topology on $G$ such that $id_G:(G,\zeta_{L})\to (G,\tau)$ is continuous, then $id_G:(G,\zeta_{L})\to (G,\zeta_{N_\tau})$ is continuous.

\begin{equation*}
\xymatrix{
(G,\zeta_{N_\tau}) \ar[rr] & & (G,\tau) \\
& (G,\zeta_L) \ar[ur] \ar@{-->}[ul] &
}
\end{equation*}

Consequently $id_G:(G,\zeta_{L})\to (G,\tau)$ semitopological implies $id_G:(G,\zeta_{L})\to (G,\zeta_{N_\tau})$ semitopological by Lemma \ref{primo_ok}, that is, $[G,N_\tau]\leq L$ by Corollary \ref{almdiscrsemitop}.
\end{rem}

\section{Ta\u\i manov groups}

Let $F\in[G]^{<\omega}$ be a finite subset of $G$ and $$c_G(F)=\bigcap_{x\in F}c_G(x)$$ the centralizer of $F$ in $G$. Then $\mathcal C=\{c_G(F):F\in[G]^{<\omega}\}$ is a family of subgroups of $G$ closed under finite intersections.
Then the Ta\u\i manov topology $T_G$ has $\mathcal C$ as local base at $e_G$, that is $\mathcal B_{T_G}=\mathcal C$.

\smallskip
We collect in the next lemma the first properties of this topology.

\begin{lem}\label{taimanov}
Let $G$ be a group. Then:
\begin{itemize}
	\item[(a)]$N_{T_G}=Z(G)$;
	\item[(b)]$T_G$ is Hausdorff if and only if $Z(G)=\{e_G\}$;
	\item[(c)]$G$ is abelian if and only if $T_G=\iota_G$;
	\item[(d)]in case $G$ is finitely generated, $T_G$ is almost trivial; in particular $T_G=\delta_G$ if and only if $Z(G)$ is trivial.
\end{itemize}
\end{lem}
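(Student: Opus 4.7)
The whole proof will be a direct unfolding of the definition of the prebase $\mathcal{C}=\{c_G(F):F\in[G]^{<\omega}\}$ of $T_G$ at $e_G$, together with the standard identity $N_\tau=\bigcap\{U:U\in\mathcal V_{(G,\tau)}(e_G)\}$, which holds in any topological group (since for symmetric $U\in\mathcal V_{(G,\tau)}(e_G)$ one has $x\in \overline{\{e_G\}}^\tau$ iff $x\in U$).

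For (a), I apply that identity to $T_G$. Because the family $\mathcal{C}$ is a base at $e_G$, we get
\[
N_{T_G}=\bigcap_{F\in[G]^{<\omega}}c_G(F)=\bigcap_{g\in G}c_G(g)=Z(G),
\]
where the second equality uses that a finite intersection $c_G(F)=\bigcap_{g\in F}c_G(g)$ is already accounted for by the singleton intersections. Item (b) follows at once: $T_G$ is Hausdorff iff $\overline{\{e_G\}}^{T_G}=\{e_G\}$ iff $N_{T_G}=\{e_G\}$, which by (a) means $Z(G)=\{e_G\}$.

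For (c), note that $T_G=\iota_G$ means the only $T_G$-open neighborhood of $e_G$ is $G$ itself. Since each basic neighborhood $c_G(g)$ is $T_G$-open and contains $e_G$, this forces $c_G(g)=G$ for every $g\in G$, i.e.\ every element of $G$ commutes with every other element, so $G$ is abelian. Conversely, if $G$ is abelian then $c_G(F)=G$ for every finite $F\subseteq G$, so $\mathcal C=\{G\}$ and $T_G=\iota_G$.

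For (d), suppose $G=\langle g_1,\dots,g_n\rangle$ and let $F=\{g_1,\dots,g_n\}$. Any element commuting with each $g_i$ commutes with every word in the $g_i^{\pm 1}$, hence with all of $G$, so
\[
c_G(F)=\bigcap_{i=1}^{n}c_G(g_i)=Z(G).
\]
Thus $Z(G)$ is itself a basic $T_G$-neighborhood of $e_G$; combined with (a), this says that $N_{T_G}=Z(G)$ is $T_G$-open, so $T_G$ is almost trivial by Definition \ref{at-def}. Under the order-reversing bijection $N\mapsto \zeta_N$ between $\mathcal N(G)$ and $\mathcal{AT}(G)$, this forces $T_G=\zeta_{Z(G)}$, which equals $\delta_G=\zeta_{\{e_G\}}$ precisely when $Z(G)=\{e_G\}$. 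There is no real obstacle: every item is a one-line consequence of the definition of $\mathcal C$, and the only mildly subtle point is observing in (d) that finite generation collapses $\bigcap_{g\in G}c_G(g)$ to a single finite intersection and thus promotes $Z(G)$ from a mere intersection of basic open sets to a basic open set itself.
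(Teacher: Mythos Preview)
Your proof is correct; each item is handled by the natural unfolding of the base $\mathcal C$ at $e_G$, and the argument in (d) identifying $Z(G)$ with a single basic neighbourhood via a finite generating set is exactly the right observation. The paper itself states this lemma without proof, treating all four items as immediate from the definitions, so your write-up simply supplies the routine details the authors chose to omit.
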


\subsection{Permanence properties of the class $\mathfrak T$}

The following results show that the Ta\u\i manov topology has nice properties. The next proposition proves that it is a functorial topology with respect to continuous surjective homomorphisms.

\begin{prop}\label{functorial}
Let $G$ be a group. Then every surjective homomorphism $f:(G,T_G)\to (H,T_H)$ is continuous. 
\end{prop}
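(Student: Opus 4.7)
The plan is to verify continuity of $f$ at $e_G$, since a homomorphism between topological groups is continuous as soon as it is continuous at the identity. By definition of $T_H$, a base of neighborhoods of $e_H$ is given by the sets $c_H(F')$ as $F'$ ranges over finite subsets of $H$, so it suffices to show that for every such $F'$ the preimage $f^{-1}(c_H(F'))$ is a $T_G$-neighborhood of $e_G$.

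The key step uses surjectivity: for each $y\in F'$ I choose some $x_y\in G$ with $f(x_y)=y$, and set $F=\{x_y:y\in F'\}$, a finite subset of $G$. I claim
\begin{equation*}
c_G(F)\subseteq f^{-1}(c_H(F')).
\end{equation*}
Indeed, if $g\in c_G(F)$, then $gx_y=x_yg$ for every $y\in F'$; applying the homomorphism $f$ yields $f(g)\,y = y\,f(g)$ for every $y\in F'$, hence $f(g)\in c_H(F')$, i.e., $g\in f^{-1}(c_H(F'))$. Since $c_G(F)$ is a basic $T_G$-neighborhood of $e_G$, this proves the required continuity at $e_G$.

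There is no real obstacle here; the only place surjectivity enters is in the choice of the preimages $x_y$, without which one cannot guarantee that the finite set $F'$ can be ``lifted'' to a finite $F\subseteq G$ whose $G$-centralizer is sent into $c_H(F')$. One could remark in passing that if $f$ is not surjective the argument still gives continuity onto $f(G)$ endowed with the topology induced from $T_H$, but the stated version with $T_H$ on the codomain genuinely needs the surjectivity hypothesis.
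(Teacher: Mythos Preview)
Your proof is correct and follows essentially the same approach as the paper: lift elements of $H$ to $G$ via surjectivity and observe that $f(c_G(g))\subseteq c_H(f(g))$. The only cosmetic difference is that the paper checks this on the prebase elements $c_H(h)$ for single $h\in H$, whereas you work directly with the finite intersections $c_H(F')$; both are entirely adequate.
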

\begin{proof}
Let $h\in H$ and consider $g\in G$ such that $f(g)=h$. Then $f(c_G(g))\subseteq c_H(h)$. This proves the continuity of $f:(G,T_G)\to (H,T_H)$.
\end{proof}

On the other hand, the next example shows that the Ta\u\i manov topology is not functorial with respect to open surjective homomorphisms.

\begin{exmp}\label{H_Z}
For $$\mathbb H_\Z:=\begin{pmatrix}
1 & \Z & \Z \\
0 & 1 & \Z \\
0 & 0 & 1
\end{pmatrix}$$ the group of upper unitriangular $3\times 3$ matrices over $\Z$,
the canonical projection $\pi:(\mathbb H_\Z,T_{\mathbb H_\Z})\to (\mathbb H_\Z/Z(\mathbb H_\Z),T_{\mathbb H_\Z/Z(\mathbb H_\Z)})$ is not open.

Indeed, since $\mathbb H_\Z/Z(\mathbb H_\Z)=:G$ is abelian, $T_G=\iota_G$ by Lemma \ref{taimanov}(c). Moreover note that $G\cong \Z\times \Z$. Let $h=\begin{pmatrix}
1 & 0 & 0 \\
0 & 1 & 1 \\
0 & 0 & 1
\end{pmatrix}\in \mathbb H_\Z$. Then $c_{\mathbb H_\Z}(h)=\begin{pmatrix}
1 & 0 & \Z \\
0 & 1 & \Z \\
0 & 0 & 1
\end{pmatrix}$ and $\pi(c_{\mathbb H_\Z}(h))\cong \{0\}\times\Z$, which is not open in $(G,\iota_G)$.
\end{exmp}

\begin{lem}\label{tai-products}
Let $G=\prod_{i\in I}G_i$. Then $\prod_{i\in I}T_{G_i}\leq T_G$. If $I=\{1,\ldots,n\}$ is finite, then $T_{G_1}\times\ldots\times T_{G_n}= T_G$.
\end{lem}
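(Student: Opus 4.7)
The plan is to prove both containments directly by unpacking the definitions of the product topology and the base $\mathcal{C}=\{c_G(F):F\in[G]^{<\omega}\}$ of $T_G$. The first inclusion works at the level of the subbase of the product topology, and the second (which only works for finite $I$) works at the level of a generic basic neighborhood in $T_G$.

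For $\prod_{i\in I}T_{G_i}\leq T_G$, I would take a subbasic neighborhood of $e_G$ in the product topology, namely $W=p_i^{-1}(c_{G_i}(F_i))$ for some $i\in I$ and some $F_i\in[G_i]^{<\omega}$, where $p_i\colon G\to G_i$ denotes the canonical projection. For each $x\in F_i$ let $\widetilde{x}\in G$ be the element with $i$-th coordinate $x$ and trivial entries elsewhere, and put $\widetilde{F_i}=\{\widetilde{x}:x\in F_i\}\in[G]^{<\omega}$. A direct check shows that $g=(g_j)_{j\in I}\in G$ commutes with $\widetilde{x}$ if and only if $g_i$ commutes with $x$ in $G_i$, whence $c_G(\widetilde{F_i})=p_i^{-1}(c_{G_i}(F_i))=W$. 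Thus $W\in\mathcal{B}_{T_G}$, and since $T_G$ is finer than every subbasic neighborhood of $\prod_{i\in I}T_{G_i}$, it is finer than the whole product topology.

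For the equality in the finite case $I=\{1,\dots,n\}$, it remains to show $T_G\leq T_{G_1}\times\cdots\times T_{G_n}$. Pick a basic neighborhood $c_G(F)$ of $e_G$ in $T_G$ with $F=\{y_1,\dots,y_k\}\in[G]^{<\omega}$, and write $y_l=(y_l^1,\dots,y_l^n)$ with $y_l^i\in G_i$. Set $F_i:=\{y_1^i,\dots,y_k^i\}\in[G_i]^{<\omega}$. For any $g=(g_1,\dots,g_n)\in G$, the element $g$ commutes with $y_l$ if and only if $g_i$ commutes with $y_l^i$ for every $i$, so
$$c_G(F)=\prod_{i=1}^{n}c_{G_i}(F_i),$$
which is a basic neighborhood of $e_G$ in $T_{G_1}\times\cdots\times T_{G_n}$. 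Combined with the first inclusion this gives the desired equality.

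The argument is mostly definitional and presents no real obstacle; the only point to keep in mind is why the second inclusion cannot be extended to infinite $I$. For infinite $I$, an element $y\in F$ may have infinitely many non-trivial coordinates, in which case $\prod_{i\in I}c_{G_i}(F_i)$ imposes constraints on infinitely many components and therefore fails to be open in the product topology. This justifies restricting the equality statement to finite index sets.
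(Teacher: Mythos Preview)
Your proof is correct and follows essentially the same direct, definitional approach as the paper. The only cosmetic differences are: for the first inclusion the paper invokes the functoriality of $T_{(-)}$ under surjective homomorphisms (continuity of the projections $(G,T_G)\to(G_i,T_{G_i})$), whereas you compute the subbasic sets directly; and for the reverse inclusion the paper enlarges $F$ to a rectangle $F_1\times\cdots\times F_n$ and observes $c_G(F_1\times\cdots\times F_n)=\prod_i c_{G_i}(F_i)\subseteq c_G(F)$, while you compute $c_G(F)=\prod_i c_{G_i}(p_i(F))$ exactly---a slightly sharper but equivalent observation.
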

\begin{proof}
Since all the canonical projections $\pi_i:(G,T_G)\to(G_i,T_{G_i})$ are continuous by Proposition \ref{functorial}, $\prod_{i\in I}T_{G_i}\leq T_G$.

Suppose now that $I=\{1,\ldots,n\}$ is finite. If $F$ is a finite subset of $G_1\times\ldots \times G_n$, then it is contained in a finite subset of the form $F_1\times\ldots \times  F_n$, where each $F_i$ is a finite subset of $G_i$ for $i=1,\ldots, n$. Moreover $c_G(F_1\times\ldots \times F_n)=c_{G_1}(F_1)\times\ldots \times c_{G_n}(F_n)$. This proves that $T_G=T_{G_1}\times \ldots \times T_{G_n}$.
\end{proof}

\begin{prop}\label{zg=1}
\begin{itemize}
	\item[(a)]If $G\in\mathfrak T$, then $Z(G)=\{e_G\}$.
	\item[(b)]If $G\in\mathfrak T_t$, then $G$ is perfect.
\end{itemize}
\end{prop}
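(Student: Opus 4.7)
For part (a), the proof is essentially immediate from Lemma \ref{taimanov}(a). The hypothesis $G \in \mathfrak T$ means $T_G = \delta_G$, so the closure of $\{e_G\}$ in $T_G$ is trivial, i.e., $N_{T_G} = \{e_G\}$. But Lemma \ref{taimanov}(a) identifies $N_{T_G}$ with $Z(G)$, so $Z(G) = \{e_G\}$. (Equivalently one could cite Lemma \ref{taimanov}(b), since $\delta_G$ is obviously Hausdorff.)

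For part (b), the strategy is to apply (a) to the abelianization $G/G'$. Since $G \in \mathfrak T_t$, the definition of totally Ta\u\i manov groups gives $G/G' \in \mathfrak T$, because $G'$ is a normal subgroup of $G$. Then (a) forces $Z(G/G') = \{e_{G/G'}\}$. On the other hand $G/G'$ is abelian by construction, hence $Z(G/G') = G/G'$. Combining these two facts yields $G/G'$ trivial, i.e., $G = G'$, which is precisely the assertion that $G$ is perfect.

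The two arguments are short and direct, so there is no real obstacle; the only point that deserves mention is that the existence of the normal subgroup $G'$ of $G$ (needed to invoke the totally Ta\u\i manov hypothesis in part (b)) is automatic, so the reduction to part (a) via the quotient $G/G'$ goes through with no extra hypotheses on $G$.
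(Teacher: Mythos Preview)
Your proof is correct and essentially matches the paper's. The only cosmetic difference is in part (b): the paper invokes Lemma~\ref{taimanov}(c) directly (an abelian group in $\mathfrak T$ has $T_{G/G'}=\iota_{G/G'}=\delta_{G/G'}$, hence is trivial), whereas you route through part (a) to get $Z(G/G')=\{e\}$ and then use abelianity; these are the same argument phrased slightly differently.
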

\begin{proof}
(a) Follows from Lemma \ref{taimanov}(b).

(b) Since $G/G'$ is abelian and in $\mathfrak T$, $G/G'$ is trivial in view of Lemma \ref{taimanov}(c), that is $G=G'$.
\end{proof}

It follows from (a) that every non-trivial abelian group $G\not\in\mathfrak T$.

\medskip
The next result about products is a consequence of Lemma \ref{tai-products}.

\begin{prop}\label{t-products}
The class $\mathfrak T$ is closed under taking finite products.
\end{prop}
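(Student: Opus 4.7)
The plan is to deduce this immediately from Lemma \ref{tai-products}. That lemma already gives the key identity
$$T_{G_1 \times \ldots \times G_n} = T_{G_1} \times \ldots \times T_{G_n}$$
for any finite family of groups, so there is essentially no work left once the Ta\u\i manov hypothesis on the factors is invoked.

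Concretely, assume $G_1, \ldots, G_n \in \mathfrak T$, meaning $T_{G_i} = \delta_{G_i}$ for each $i = 1, \ldots, n$. Set $G := G_1 \times \ldots \times G_n$. Applying Lemma \ref{tai-products} one obtains
$$T_G = T_{G_1} \times \ldots \times T_{G_n} = \delta_{G_1} \times \ldots \times \delta_{G_n}.$$
The product of finitely many discrete topologies is discrete (a single point in each factor is open, and the box $\{e_{G_1}\} \times \ldots \times \{e_{G_n}\} = \{e_G\}$ is thus open in the product), so the right-hand side equals $\delta_G$. Hence $T_G = \delta_G$, i.e., $G \in \mathfrak T$.

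There is no real obstacle here: the content lives in Lemma \ref{tai-products}, whose proof in turn rested on the functoriality of $T_{(-)}$ with respect to surjective homomorphisms (Proposition \ref{functorial}) together with the observation that every finite subset of a finite product sits inside a product of finite subsets and that centralizers distribute over products. One should, however, note why the restriction to \emph{finite} products is genuinely needed: for infinite $I$, Lemma \ref{tai-products} only yields $\prod_i T_{G_i} \le T_G$, and even if each factor is discrete, the product topology $\prod_i \delta_{G_i}$ is not discrete, so one cannot conclude $T_G = \delta_G$ from this argument. This explains why the statement is restricted to finite products and also flags a natural follow-up question about infinite products that the present proof leaves untouched.
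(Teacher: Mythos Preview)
Your proof is correct and follows essentially the same route as the paper: both invoke Lemma~\ref{tai-products} to identify $T_G$ with the product of the $T_{G_i}$, then conclude from $T_{G_i}=\delta_{G_i}$ that $T_G=\delta_G$. The paper's version is terser (it treats the case $n=2$ and remarks that this extends to all finite products), while you spell out the finite-product case directly and add a useful comment on why the argument does not extend to infinite products.
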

\begin{proof}
Let $G_1,G_2\in\mathfrak T$ and $G:=G_1\times G_2$. By Lemma \ref{tai-products} $T_G=T_{G_1}\times T_{G_2}$ and so $T_G=\delta_G$, that is $G\in\mathfrak T$. This can be extended to all finite products.
\end{proof}

The next example in particular shows that $\mathfrak T$ is not closed under taking quotients and subgroups since the groups in (b) and (c) have abelian quotients (so they are not in $\mathfrak T_t$) and non-trivial abelian subgroups.

\begin{exmp}\label{finite-taimanov}
\begin{itemize}
	\item[(a)]A finite group $G\in\mathfrak T$ if and only if $Z(G)=\{e_G\}$. This follows from Lemma \ref{taimanov}, but can be also simply directly proved.
	\item[(b)] Let $G=\begin{pmatrix}
\mathbb R^* & \mathbb R \\
0 & 1\end{pmatrix}$. Then $G\in\mathfrak T$. Indeed, for $F=\left\{\begin{pmatrix}
2 & 0 \\
0 & 1\end{pmatrix},\begin{pmatrix}
1 & 1 \\
0 & 1\end{pmatrix}\right\}$ $c_G(F)=\{e_G\}$.
	\item[(c)] Every non-abelian free group $F(X)$ of rank $>1$ is in $\mathfrak T$. Indeed, for $F=\{a,b\}$, where $a,b\in X$ are generators of $F(X)$, $c_{F(X)}(F)=\{e_{F(X)}\}$.
\end{itemize}
\end{exmp}

This example shows also that the condition ``surjective'' in Proposition \ref{functorial} cannot be removed: if $G$ is one of the groups in (b) or (c), then $G$ has some non-trivial abelian subgroup $H$. Since $H$ is abelian, $T_H=\iota_H$, while $T_G=\delta_G$. Consequently the injective homomorphism $(H,T_H)\to (G,T_G)$ is far from being continuous.

\begin{rem}
Since non-abelian free groups of rank $>1$ are Ta\u\i manov (as shown in Example \ref{finite-taimanov}(c)),
\begin{itemize}
	\item there exist arbitrarily large Ta\u\i manov groups; moreover, every non-abelian subgroup of a non-abelian free group is Ta\u\i manov, being free \cite{Rob};
	\item every group is quotient of a Ta\u\i manov group, since every group is quotient of a non-abelian free group of rank $>1$ \cite{Rob}.
\end{itemize}
\end{rem}

It is not clear if this holds also for subgroups:

\begin{question}
Is every group subgroup of a Ta\u\i manov group?
\end{question}

Theorem \ref{abelian(->tai} answers positively the question in the abelian case.

For an abelian group $G$ and $p\in\mathbb P$ in what follows we denote by $r_p(G)$ the $p$-rank of $G$.

\begin{lem}\label{r2=0}
Let $G$ be an abelian group with $r_2(G)=0$. Then there exists $H\in\mathfrak T$ such that $G\leq H$ and $[H:G]=2$.
\end{lem}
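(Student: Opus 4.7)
The plan is to build $H$ as the semidirect product $H := G \rtimes \langle t\rangle$, where $\langle t\rangle$ has order $2$ and $t$ acts on $G$ by inversion, i.e., $tgt^{-1} = g^{-1}$ for every $g\in G$. Identifying $G$ with $G\times\{e\}$, we have $G\leq H$ and $[H:G]=2$ for free, so the real task is to show $H\in\mathfrak T$, i.e., that $\{e_H\}$ is $T_H$-open, equivalently that there is a finite $F\subseteq H$ with $c_H(F)=\{e_H\}$. (The statement is only meaningful for non-trivial $G$, since for $G=\{e_G\}$ the unique order-$2$ overgroup is abelian and thus not in $\mathfrak T$; so I assume $G\ne\{e_G\}$.)

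Fix any $g_0\in G\setminus\{e_G\}$ and take $F := \{t,g_0\}$ as candidate. The first step is to compute $c_H(t)$: a direct calculation using $t^2 = e_H$ and $tgt^{-1} = g^{-1}$ shows that, for $g\in G$, both $gt=tg$ and $(tg)t=t(tg)$ reduce to the condition $g=g^{-1}$. Since the hypothesis $r_2(G)=0$ means $\{g\in G:g=g^{-1}\}=\{e_G\}$, this forces $c_H(t)=\langle t\rangle=\{e_H,t\}$.

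Next I would compute $c_H(g_0)$. Because $G$ is abelian, $G\subseteq c_H(g_0)$ automatically. For an element $tg\in tG$, the equation $(tg)g_0 = g_0(tg)$ simplifies (via $tg_0 = g_0^{-1}t$ and cancellation in $G$) to $g_0=g_0^{-1}$; by the choice of $g_0$ and the hypothesis $r_2(G)=0$, this is impossible. Hence $c_H(g_0)=G$, and intersecting,
\[
c_H(F) = c_H(t)\cap c_H(g_0) = \{e_H,t\}\cap G = \{e_H\},
\]
so $T_H=\delta_H$ and $H\in\mathfrak T$ as desired.

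The only delicate choice in the argument is the finite test set $F$: taking just $\{t\}$ leaves $\langle t\rangle$ in the centralizer, so a single non-trivial $g_0\in G$ is exactly what is needed to cut $t$ out of the intersection. The hypothesis $r_2(G)=0$ enters only through the single observation that inversion on $G$ has trivial fixed points, and it is used both to trivialize $c_H(t)\cap G$ and to ensure that no element of $tG$ centralizes $g_0$; apart from that the proof is a routine semidirect product computation.
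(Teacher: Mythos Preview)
Your proof is correct and follows essentially the same approach as the paper: both construct $H=G\rtimes\langle t\rangle$ with $t$ acting by inversion and verify $H\in\mathfrak T$ via the finite set $F=\{t,g_0\}$ for some $g_0\in G\setminus\{e_G\}$. Your explicit observation that the statement tacitly requires $G\neq\{e_G\}$ (since any group of order $2$ is abelian and hence not in $\mathfrak T$) is a useful clarification that the paper leaves implicit.
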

\begin{proof}
Let $f:G\to G$ be defined by $f(x)=-x$ for every $x\in G$. Moreover let $$H:=G\rtimes \langle f\rangle$$ ($\rtimes$ denotes the semidirect product). Then $c_H(0,f)=\langle (0,f)\rangle$ and $(0,f)\not\in c_H(g,id_G)$ for every $g\in G\setminus\{0\}$. Consequently for $F=\{(g,id_G),(0,f)\}$, with $g\in G\setminus\{0\}$, $c_H(F)=\{e_H\}$, that is $H\in\mathfrak T$. Since $f$ has order $2$, $G$ has index $2$ in $G$.
\end{proof}

\begin{theo}\label{abelian(->tai}
For every abelian group $G$ there exists a group $H\in\mathfrak T$ containing $G$ as a subgroup and such that $|H|=\omega\cdot |G|$.
\end{theo}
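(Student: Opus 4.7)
The plan is to give a single unified construction that works for every abelian $G$; the trivial case $G=\{0\}$ is disposed of by taking any countable Tai\u\i manov group, e.g.\ a non-abelian free group of rank $2$ (cf.\ Example \ref{finite-taimanov}(c)), so I assume $G\neq\{0\}$ in what follows. The main obstacle is that the semidirect product construction of Lemma \ref{r2=0} is tailored to the case $r_2(G)=0$: in any abelian overgroup of $G$ the $2$-torsion of $G$ survives, and the automorphism $-\mathrm{id}$ ceases to be fixed-point free, so the lemma's argument cannot be transferred as it stands. I would resolve this by replacing the order-$2$ automorphism with an automorphism of infinite order acting on a suitable enlargement of $G$.

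Set $A:=\bigoplus_{n\in\Z}G$ and let $\alpha\in\Aut A$ be the shift $\alpha((a_n)_{n\in\Z})=(a_{n-1})_{n\in\Z}$. Form the restricted wreath product
\begin{equation*}
H:=A\rtimes_\alpha\Z.
\end{equation*}
Writing $a^{(g)}\in A$ for the sequence with $g$ at position $0$ and $0$ elsewhere, the map $g\mapsto(a^{(g)},0)$ gives the desired embedding $G\hookrightarrow H$, and a direct cardinality count yields $|H|=|A|\cdot\omega=\omega\cdot|G|$.

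The verification that $H\in\mathfrak T$ is a brief direct computation in the semidirect product. Fix $g_0\in G\setminus\{0\}$ and take $F=\{x,y\}$ with $x:=(a^{(g_0)},0)$ and $y:=(0,1)$. The $\alpha$-invariant elements of $A$ are precisely the constant sequences, hence, having finite support, only the zero sequence; consequently $c_H(y)=\{0\}\times\Z$. Similarly, $\alpha^n(a^{(g_0)})$ is the sequence with $g_0$ at position $n$ and $0$ elsewhere, which equals $a^{(g_0)}$ only when $n=0$ (using $g_0\neq 0$), so $c_H(x)=A\times\{0\}$. Intersecting the two gives $c_H(F)=\{e_H\}$, so $H\in\mathfrak T$, completing the proof.
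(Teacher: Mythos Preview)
Your proof is correct and takes a cleaner route than the paper's. The paper first embeds $G$ in its divisible hull $D(G)$, splits $D(G)=G_1\oplus G_2$ with $r_2(G_1)=0$ and $G_2$ a direct sum of copies of $\Z(2^\infty)$, and then treats the two summands separately: $G_1$ is handled by Lemma~\ref{r2=0} (semidirect product with the involution $x\mapsto -x$), while for $G_2$ one passes to $\bigoplus_\omega\Z(2^\infty)$ and uses the shift automorphism via a Claim that is essentially your centralizer computation. The two Ta\u\i manov pieces are then glued with Proposition~\ref{t-products}. Your single wreath-product construction $H=(\bigoplus_\Z G)\rtimes\Z$ bypasses all of this: by first inflating $G$ to $\bigoplus_\Z G$ you guarantee one automorphism (the shift) all of whose non-trivial powers are fixed-point free, irrespective of the $2$-torsion of $G$, so no divisible hull, no case split, and no product argument is needed. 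The paper's approach does yield a smaller overgroup in the $2$-torsion-free case (index $2$ rather than countably infinite), but both constructions meet the stated bound $|H|=\omega\cdot|G|$.
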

\begin{proof}
Let $G$ be an abelian group. Then $G\subseteq D(G)=G_1\oplus G_2$, where $r_2(G_1)=0$ and $r_2(G_2)=r_2(D(G))$. Then there exists $H_1\in\mathfrak T$ such that $G_1\leq H_1$ and $|H_1|=2\cdot |G_1|$ by Lemma \ref{r2=0}.

Now consider $G_2=\bigoplus_{r_2(G)}\Z(2^\infty)$. If $r_2(G)\leq\omega$, then $G_2$ is contained in $\bigoplus_\omega\Z(2^\infty)$. Let then $\sigma$ be the shift $\bigoplus_\omega\Z(2^\infty)\to \bigoplus_\omega\Z(2^\infty)$ defined by $(x_n)_n\mapsto(x_{n-1})_n$ for every $(x_n)_n\in \bigoplus_\omega\Z(2^\infty)$. Then $\sigma^n$ has no non-zero fixed point for every $n\in\Z$, $n\neq 0$.

\bigskip
\noindent\textbf{Claim.}
Let $G$ be an abelian group and let $f$ be an automorphism of $G$ such that $f^n$ has no non-zero fixed point for every $n\in\Z$, $n\neq 0$. Then there exists $H\in\mathfrak T$ such that $G\leq H$ and $|H|=\omega\cdot|G|$.
\begin{proof}[Proof of the claim.]
Let $$H:=G\rtimes \langle f\rangle.$$ Then $c_H(0,f)=\langle (0,f)\rangle$ and $(0,f)\not\in c_H(g,id_G)$ for every $g\in G\setminus\{0\}$. Consequently for $F=\{(g,id_G),(0,f)\}$, with $g\in G\setminus\{0\}$, $c_H(F)=\{e_H\}$, that is $H\in\mathfrak T$. Since $f$ has infinite order $|H|=\omega\cdot|G|$.
\end{proof}

By the claim there exists a group $H_2\in\mathfrak T$ such that $G_2\leq \bigoplus_\omega\Z(2^\infty)\leq H_2$ and $|H_2|=\omega$. Suppose that $r_2(G)\geq\omega$. Then $G_2\cong \bigoplus_{r_2(G)} (\bigoplus_\omega\Z(2^\infty))$. Let $\widetilde \sigma:\bigoplus_{r_2(G)} (\bigoplus_\omega\Z(2^\infty))\to \bigoplus_{r_2(G)} (\bigoplus_\omega\Z(2^\infty))$ be defined by $\widetilde \sigma\restriction_{\bigoplus_\omega\Z(2^\infty)}=\sigma$. Then $\widetilde\sigma^n$ has no non-zero fixed point for every $n\in\Z$, $n\neq 0$, and again the claim gives a group $H_2\in\mathfrak T$ that contains $G_2$ as a subgroup and such that $|H_2|=|G_2|$.

Let $H:=H_1\oplus H_2$. By Proposition \ref{t-products} $H\in\mathfrak T$. Moreover $|H|=\omega\cdot|H_1|\cdot |H_2|=\omega\cdot|G_1|\cdot |G_2|=\omega\cdot|G|$.
\end{proof}

Lemma \ref{l2} shows that to prove that a group is Ta\u\i manov it suffices to consider a convenient quotient with a finite normal subgroup and check whether it is Ta\u\i manov.

\begin{claim}\label{l1}
Let $G$ be a group with $Z(G)=\{e_G\}$. If there exists a finite subset $F$ of $G$ such that $c_G(F)$ is finite, then there exists another finite subset $F_1\supseteq F$ of $G$ such that $c_G(F_1)=\{e_G\}$. In particular $G\in\mathfrak T$.
\end{claim}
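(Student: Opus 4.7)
The plan is to exploit the trivial center hypothesis to shrink the finite set $c_G(F)$ down to $\{e_G\}$ by enlarging $F$ with one centralizer-killing element per non-identity point of $c_G(F)$.

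First I would enumerate the finitely many non-identity elements of $c_G(F)$, say $c_G(F) = \{e_G, x_1, \ldots, x_n\}$. Since $Z(G) = \{e_G\}$, each $x_i$ fails to commute with some element $g_i \in G$, equivalently $x_i \notin c_G(g_i)$. I would then set
\[
F_1 := F \cup \{g_1, \ldots, g_n\},
\]
which is a finite subset of $G$ containing $F$.

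Next I would verify $c_G(F_1) = \{e_G\}$. By construction,
\[
c_G(F_1) = c_G(F) \cap \bigcap_{i=1}^n c_G(g_i) \subseteq c_G(F) = \{e_G, x_1, \ldots, x_n\},
\]
and for each $i$ the element $x_i$ is excluded because $x_i \notin c_G(g_i)$. Hence only $e_G$ survives.

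Finally, for the second assertion I would invoke the definition of the Ta\u\i manov topology: the family $\mathcal C = \{c_G(F') : F' \in [G]^{<\omega}\}$ is a local base of $T_G$ at $e_G$. Since $c_G(F_1) = \{e_G\} \in \mathcal C$, the singleton $\{e_G\}$ is a neighborhood of $e_G$ in $T_G$, so $T_G = \delta_G$, i.e., $G \in \mathfrak T$. There is essentially no obstacle here; the argument is a direct unfolding of the hypotheses $Z(G) = \{e_G\}$ and $|c_G(F)| < \infty$, and the only thing worth being careful about is that finiteness of $c_G(F)$ is exactly what allows the choice of finitely many witnesses $g_1, \ldots, g_n$ to be assembled into a single finite set $F_1$.
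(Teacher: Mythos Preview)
Your proof is correct and follows essentially the same approach as the paper: enumerate the non-identity elements of $c_G(F)$, use $Z(G)=\{e_G\}$ to pick for each one a witness it fails to commute with, and adjoin those finitely many witnesses to $F$. The only difference is cosmetic (letter choices) and that you spell out explicitly why $c_G(F_1)=\{e_G\}$ forces $T_G=\delta_G$, which the paper leaves implicit.
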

\begin{proof}
Let $c_G(F)=\{e_G,g_1,\ldots,g_n\}$. Since $Z(G)=\{e_G\}$, for every $i\in\{1,\ldots,n\}$ there exists $h_i\in G$ such that $[g_i,h_i]\neq e_G$; in particular $g_i\not \in c_G(h_i)$. Let $F_1=F\cup\{h_1,\ldots,h_n\}$. Then $g_i\not\in c_G(F_1)$ for every $i\in\{1,\ldots,n\}$. Since $c_G(F_1)\subseteq c_G(F)=\{e_G,g_1,\ldots, g_n\}$, this proves that $c_{G}(F_1)=\{e_G\}$.
\end{proof}

\begin{lem}\label{l2}
Let $G$ be a group with $Z(G)=\{e_G\}$ and let $N$ be a normal finite subgroup of $G$ such that $G/N\in \mathfrak T$. Then $G\in\mathfrak T$.
\end{lem}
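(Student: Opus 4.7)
The plan is to reduce to Claim \ref{l1} by exhibiting a finite subset $F_0$ of $G$ whose centralizer $c_G(F_0)$ is finite. Once this is done, Claim \ref{l1} combined with the hypothesis $Z(G)=\{e_G\}$ produces an enlargement $F_1\supseteq F_0$ with $c_G(F_1)=\{e_G\}$, which means that $\{e_G\}\in \mathcal B_{T_G}$ and therefore $T_G=\delta_G$, i.e.\ $G\in\mathfrak T$.

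To build $F_0$, I would use the hypothesis $G/N\in\mathfrak T$. Pick a finite subset $\overline F=\{\bar x_1,\ldots,\bar x_k\}$ of $G/N$ with $c_{G/N}(\overline F)=\{e_{G/N}\}$, and choose lifts $x_1,\ldots,x_k\in G$ with $\pi(x_i)=\bar x_i$, where $\pi:G\to G/N$ is the canonical projection. Set $F_0=\{x_1,\ldots,x_k\}$. For any $g\in c_G(F_0)$, the element $\bar g=\pi(g)$ commutes with each $\bar x_i$, hence $\bar g\in c_{G/N}(\overline F)=\{e_{G/N}\}$, so $g\in N$. This gives the inclusion $c_G(F_0)\subseteq N$, and since $N$ is finite, $c_G(F_0)$ is finite.

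At this point I invoke Claim \ref{l1}: since $Z(G)=\{e_G\}$ and $c_G(F_0)$ is finite, there exists a finite $F_1\supseteq F_0$ with $c_G(F_1)=\{e_G\}$, proving $G\in\mathfrak T$.

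I do not anticipate a real obstacle: the argument is essentially a one-line lift-and-apply-Claim-\ref{l1} reduction. The only place where one must be a bit careful is in checking that the lifts $x_i$ actually work, but this is immediate because commuting in $G$ projects to commuting in $G/N$, and no converse is needed. The hypotheses $Z(G)=\{e_G\}$ and the finiteness of $N$ are both used exactly once — the latter to conclude finiteness of $c_G(F_0)$, and the former to feed Claim \ref{l1}.
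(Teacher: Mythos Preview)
Your proof is correct and follows essentially the same approach as the paper: lift a finite subset of $G/N$ with trivial centralizer to a finite subset of $G$, observe that its centralizer is contained in $N$ and hence finite, and then invoke Claim~\ref{l1}.
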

\begin{proof}
Let $F_1$ be a finite subset of $G/N$ such that $c_{G/N}(F_1)=\{e_{G/N}\}$. Let $\pi:G\to G/N$ be the canonical projection and let $F$ be a finite subset of $G$ such that $\pi(F)=F_1$. Since $\pi(c_G(F))\subseteq c_{G/N}(F_1)=\{e_{G/N}\}$, $c_G(F)\subseteq N$. Since $N$ is finite, Claim \ref{l1} applies to conclude that $G\in\mathfrak T$.
\end{proof}

The next is an example of a totally Ta\u\i manov group.

\begin{exmp}\label{SO_3(R)inT}
We denote by $G:=SO_3(\mathbb R)$ the group of all orthogonal matrices $3\times 3$ with determinant $1$ and coefficients in $\mathbb R$. Then $G\in\mathfrak T$. Since $G$ is simple, $G\in\mathfrak T_t$.

\smallskip
Indeed, $G=\bigcup_\alpha T_\alpha$, where $T_\alpha\cong\mathbb T$ and $T_\alpha$ is generated by an element $\alpha$ of $G$, that is, $\overline{\langle\alpha\rangle}=T_\alpha$. Moreover $c_{G}(\alpha)$ contains $T_\alpha$ as a finite index subgroup and for $\alpha,\beta\in G$ with $\alpha\neq\beta$ and $T_\alpha\neq T_\beta$, $T_\alpha\cap T_\beta$ is finite. Then $c_{G}(\alpha)\cap c_{G}(\beta)$ is finite. By Claim \ref{l1} $G\in\mathfrak T$.
\end{exmp}

\subsection{The permutations groups}

For a set $X$, $x\in X$ and a subgroup $H$ of $S(X)$ let 
\begin{align*}
&O_H(x):=\{h(x):h\in H\},\\ 
&\text{Stab}\, x:=\{\rho\in S(X): \rho(x)=x\},\ \text{and}\\ 
&S_x:=\text{Stab}\, x\cap H.
\end{align*}
Moreover $H$ induces a partition of $X$, that is $X=\bigcup_{x\in R_H}O_H(x)$, where $R_H\subseteq X$ is a set of representing elements.

If $\tau\in S(X)$, then $$\text{Stab}\,x=(\text{Stab}\,\tau(x))^\tau.$$

\begin{rem}\label{claim-sx}
Let $X$ be a set and $H$ a subgroup of $S(X)$. If $\tau\in N_{S(X)}(H)$, then:
\begin{itemize}
	\item[(a)] $\tau(O_H(x))=O_H(\tau(x))$;
	\item[(b)] $S_x=(S_{\tau(x)})^\tau$; indeed, $S_x=\text{Stab}\, x\cap H=(\text{Stab}\,\tau(x))^\tau\cap H^\tau=(\text{Stab}\,\tau(x)\cap H)^\tau=(S_{\tau(x)})^\tau$;
	\item[(c)] $\tau$ induces a permutation $\widetilde\tau$ of $R_H$. Indeed, $\tau(O_H(x))=O_H(\tau(x))$ by (a); so we can define $\widetilde\tau(x)=y$, where $y\in R_H$ is the representing element of $O_H(\tau(x))$. Then $$c_{S(X)}(H)=\left\{\tau\in\bigcap_{x\in R_H\setminus \text{supp}\, \widetilde\tau}N_H(S_x)\cdot \text{Stab}\, x: [H,\tau]\subseteq \bigcap_{x\in\text{supp}\, \widetilde\tau}S_x\right\}.$$
\end{itemize}
\end{rem}

We describe the subgroups of $S(X)$ with trivial centralizer:

\begin{lem}\label{c_S(H)}
Let $X$ be a set and $H$ a subgroup of $S(X)$. Then $c_{S(X)}(H)=\{id_X\}$ if and only if the following conditions hold:
\begin{itemize}
	\item[(a)]$S_x=N_H(S_x)$ for every $x\in R_H$, and
	\item[(b)]$S_x$ and $S_y$ are not conjugated in $H$ for every $x,y\in R_H$ with $x\neq y$. 
\end{itemize}
\end{lem}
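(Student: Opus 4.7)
The plan is to exploit the orbit decomposition $X=\bigsqcup_{x\in R_H} O_H(x)$ together with the fact that elements of $c_{S(X)}(H)$ are precisely the $H$-equivariant bijections $X\to X$ (thinking of $H$ as acting on $X$ by evaluation). First I would observe that any $\tau\in c_{S(X)}(H)$ permutes the $H$-orbits, producing the induced permutation $\widetilde\tau$ of $R_H$ from Remark \ref{claim-sx}(c); indeed $\tau\in N_{S(X)}(H)$ automatically because $\tau$ commutes with $H$, and $\tau(O_H(x))=O_H(\tau(x))$ by Remark \ref{claim-sx}(a).

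Next I would establish two structural facts, which together furnish both directions:

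\textbf{(i)} For $x,y\in R_H$, via the standard identification $O_H(x)\cong H/S_x$ (sending $gS_x\mapsto g(x)$), $H$-equivariant bijections $O_H(x)\to O_H(y)$ exist if and only if $S_x$ and $S_y$ are conjugated in $H$. In particular, for $x=y$, the group of $H$-equivariant permutations of $O_H(x)$ is isomorphic to $N_H(S_x)/S_x$, realized by right multiplication. This is the classical computation of the centralizer of the regular action of $H$ on the coset space $H/S_x$.

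\textbf{(ii)} Any $H$-equivariant bijection on a union of orbits, extended by the identity on all remaining orbits, yields an element of $c_{S(X)}(H)$.

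With these in hand, the necessity of (a) and (b) is immediate: if (a) fails for some $x$, (i) produces a non-identity $H$-equivariant permutation of $O_H(x)$, and extending by identity via (ii) gives a non-trivial element of $c_{S(X)}(H)$; similarly, if $S_x$ and $S_y$ are conjugated in $H$ for distinct $x,y\in R_H$, (i) gives an $H$-equivariant bijection $\varphi:O_H(x)\to O_H(y)$, and then the involution that acts as $\varphi$ on $O_H(x)$, as $\varphi^{-1}$ on $O_H(y)$, and as the identity elsewhere is a non-trivial element of $c_{S(X)}(H)$.

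For sufficiency, assume (a) and (b), and take any $\tau\in c_{S(X)}(H)$. For each $x\in R_H$, $\tau$ restricts to an $H$-equivariant bijection $O_H(x)\to O_H(\widetilde\tau(x))$ where $\widetilde\tau(x)\in R_H$; by (i) this forces $S_x$ and $S_{\widetilde\tau(x)}$ to be conjugated in $H$, which by (b) forces $\widetilde\tau(x)=x$. Hence $\tau$ preserves each orbit, and $\tau|_{O_H(x)}$ is an $H$-equivariant permutation of $O_H(x)$, corresponding by (i) to a coset in $N_H(S_x)/S_x$; condition (a) makes this group trivial, so $\tau|_{O_H(x)}=id_{O_H(x)}$ for every $x\in R_H$, and therefore $\tau=id_X$.

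The main obstacle is the bookkeeping in (i): one must verify both that an $H$-equivariant map $H/S_x\to H/S_y$ (sending $S_x$ to some $hS_y$) forces $h^{-1}S_x h\subseteq S_y$ and, when $x=y$, that the assignment $hS_x\mapsto gh S_x$ for any $g\in N_H(S_x)$ is well-defined and $H$-equivariant, with kernel exactly $S_x$. Once this correspondence is in place, everything else reduces to the unpacking carried out above.
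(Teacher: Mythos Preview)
Your approach is correct and mirrors the paper's: both identify $c_{S(X)}(H)$ with the $H$-equivariant bijections of $X$, reduce orbit-by-orbit via the coset model $O_H(x)\cong H/S_x$, and exhibit explicit non-trivial equivariant permutations when (a) or (b) fails. One slip to fix in your obstacle paragraph: the $H$-equivariant self-map of $H/S_x$ is $hS_x\mapsto hgS_x$ (right multiplication, as you correctly said in (i)), not $hS_x\mapsto ghS_x$, since left multiplication by $g$ is $H$-equivariant only for $g\in Z(H)$; incidentally, your involution for the failure of (b) is cleaner than the paper's construction, which as written sends $O_H(x)$ onto $O_H(y)$ while fixing $X\setminus O_H(x)$ pointwise and hence is not actually a bijection.
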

\begin{proof}
Let $\tau\in c_{S(X)}(H)\setminus\{id_X\}$. There exists $x\in R_H$ such that $y:=\tau(x)\neq x$. Indeed, if $\tau(x)=x$ for every $x\in R_H$, then for every $z\in X$, there exist $h\in H$ and $x\in R_H$ such that $z=h(x)$, and so $\tau(z)=\tau(h(x))=h(\tau(x))=h(x)=z$. By Remark \ref{claim-sx}(a,b) $$\tau(O_H(x))=O_H(\tau(x))\ \text{and}\ S_x=(S_y)^\tau=S_y.$$

If $y\in O_H(x)$, then $\tau\restriction_{O_H(x)}:O_H(x)\to O_H(x)$ is a bijection and $y=h_0(x)$ for some $h_0\in H$; then $\tau(h(x))=h(\tau(x))=h h_0(x)$ for every $h\in H$. Let $h\in S_x$. Since $\tau$ is well-defined, $h(x)=x$ implies $h h_0 (x)=h_0(x)$, that is $(h_0)^{-1} h h_0(x)=x$. This is equivalent to $h^{h_0}\in S_x$, that is $h_0\in N_H(S_x)$. But $h_0\not\in S_x$ and this contradicts (a).

Suppose now that $y\not\in O_H(x)$ and so $O_H(x)\cap O_H(y)=\emptyset$. Let $z\in R_H\cap O_H(y)$. Then $y=h_0(z)$ for some $h_0\in H$. By Remark \ref{claim-sx}(b) $S_z=(S_y)^{h_0}=(S_x)^{h_0}$ and this contradicts (b). 

\smallskip
Assume that there exists $h_0\in N_H(S_x)\setminus S_x$ for some $x\in R_H$. Let $\tau:X\to X$ be defined by $\tau(x)=h_0(x)$, $\tau(h(x))=h h_0(x)$ for every $h\in H$ and $\tau(y)=y$ for every $y\in X\setminus O_H(x)$. This $\tau$ is well-defined. Indeed, if $h_1(x)=h_2(x)$ for some $h_1,h_2\in H$, that is, $h_2^{-1}h_1\in S_x$; then $h_1 h_0(x)=h_2 h_0(x)$, equivalently $h_0^{-1} (h_2^{-1}h_1) h_0(x)=x$, that is, $h_0^{-1} (h_2^{-1}h_1) h_0\in S_x$, which holds true by the hypothesis that $h_0\in N_H(S_x)$. Moreover, it is possible to check that $\tau\in S(X)$. By the definition $\tau h= h \tau$ for every $h\in H$ and so $id_X\neq\tau \in c_{S(X)}(H)$.

Suppose that $S_x=(S_z)^{h_0}$ for some $x,z\in R_H$ and $h_0\in H$. Then for $y=h_0^{-1}(z)\in O_H(z)$ we have $S_y=(S_z)^{h_0}=S_x$ by Remark \ref{claim-sx}(b). Define $\tau:X\to X$ as $\tau(x)=y$, $\tau(h(x))=h(y)$ for every $h\in H$ and $\tau(w)=w$ for every $w\in X\setminus O_H(x)$. Then $\tau$ is well-defined; indeed, if $h_1(x)=h_2(x)$ for some $h_1,h_2\in H$, that is, $h_2^{-1} h_1\in S_x$, then $h_1(y)=h_2(y)$, equivalently, $h_2^{-1} h_1\in S_y$, which holds true since $S_x=S_y$. Moreover, it is possible to check that $\tau\in S(X)$. By the definition $\tau h= h \tau$ for every $h\in H$ and so $id_X\neq\tau \in c_{S(X)}(H)$.
\end{proof}

\begin{prop}\label{family}
For a cardinal $\kappa$ the following conditions are equivalent:
\begin{itemize}
	\item[(a)]there exists a set $X$ with $|X|=\kappa$ and $S(X)\in\mathfrak T$;
	\item[(b)]there exists a set $X$ with $|X|=\kappa$ such that there exists a finitely generated subgroup $H$ of $S(X)$ such that $S_x=N_H(S_x)$ for every $x\in R_H$ and $S_x$, $S_y$ are not conjugated for every $x,y\in R_H$ with $x\neq y$.
\end{itemize}
If $\kappa>\omega$, then the following condition is equivalent to the previous:
\begin{itemize}
	\item[(c)]there exists a finitely generated group $H$ admitting a family $\mathfrak S=\{S_\alpha:\alpha<\kappa\}$ of subgroups of $H$ such that $S_\alpha=N_H(S_\alpha)$ for every $\alpha<\kappa$.
\end{itemize}
\end{prop}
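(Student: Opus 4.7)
The plan is to derive $(a) \Leftrightarrow (b)$ directly from Lemma \ref{c_S(H)} together with the definition of the Ta\u\i manov topology, and then, for $\kappa > \omega$, to establish $(b) \Leftrightarrow (c)$ by means of a coset-space construction that realises the abstract family $\mathfrak S$ as the system of stabilizers of a permutation representation.

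For $(a) \Leftrightarrow (b)$: the condition $S(X) \in \mathfrak T$, i.e.\ $T_{S(X)} = \delta_{S(X)}$, means by the very definition of the Ta\u\i manov topology that there is a finite $F \subseteq S(X)$ with $c_{S(X)}(F) = \{id_X\}$. Setting $H := \langle F\rangle$, this is equivalent to the existence of a finitely generated $H \leq S(X)$ with $c_{S(X)}(H) = \{id_X\}$, since $c_{S(X)}(F) = c_{S(X)}(\langle F\rangle)$. Lemma \ref{c_S(H)} rewrites the condition $c_{S(X)}(H) = \{id_X\}$ as exactly the two conditions listed in (b).

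For $(b) \Rightarrow (c)$ when $\kappa > \omega$: a finitely generated $H$ is countable, so every orbit $O_H(x) \cong H/S_x$ is at most countable. Thus $\kappa = |X| = \sum_{x\in R_H}|O_H(x)| \leq |R_H|\cdot \omega$, which forces $|R_H| = \kappa$. By (b)(ii) the subgroups $\{S_x : x \in R_H\}$ are pairwise non-conjugate in $H$, in particular distinct, and by (b)(i) each is self-normalizing, yielding the family required by (c).

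For $(c) \Rightarrow (b)$ when $\kappa > \omega$: first, replace $\mathfrak S$ by a pairwise non-conjugate subfamily of size $\kappa$. This is possible because $H$ is countable, so each conjugacy class of self-normalizing subgroups has at most $[H : N_H(S_\alpha)] = [H : S_\alpha] \leq \omega$ members, and $\kappa > \omega$. Put $X := \bigsqcup_{\alpha<\kappa} H/S_\alpha$; since each coset space is at most countable, $|X| = \kappa\cdot \omega = \kappa$. Let $\phi : H \to S(X)$ be the homomorphism induced by left translation on the disjoint union, let $K := \ker\phi = \bigcap_\alpha \bigcap_{h\in H} h S_\alpha h^{-1}$, and set $\bar H := \phi(H) \cong H/K$, a finitely generated subgroup of $S(X)$. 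The $\bar H$-orbits on $X$ are the pieces $H/S_\alpha$, with stabilizer of $eS_\alpha$ equal to $\phi(S_\alpha) = S_\alpha/K$. Since $K$ is normal in $H$ and contained in every $S_\alpha$, the correspondence theorem gives $N_{\bar H}(S_\alpha/K) = N_H(S_\alpha)/K = S_\alpha/K$, so the stabilizers are self-normalizing; likewise an equality $S_\alpha/K = \bar h\,(S_\beta/K)\,\bar h^{-1}$ in $\bar H$ would lift to $S_\alpha = h S_\beta h^{-1}$ in $H$, contradicting the refinement. Hence $\bar H \leq S(X)$ witnesses (b).

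The main obstacle lies in the direction $(c) \Rightarrow (b)$: realising the abstract family $\mathfrak S$ inside some $S(X)$ forces us to pass to the image $\bar H = H/K$ of the natural permutation representation rather than to $H$ itself, and the non-routine step is checking that both the self-normalizing property and the pairwise non-conjugacy survive this quotient. The cardinal arithmetic and the preliminary thinning of $\mathfrak S$ are routine consequences of the countability of finitely generated groups.
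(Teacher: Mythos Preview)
Your proof is correct and follows essentially the same route as the paper: the equivalence $(a)\Leftrightarrow(b)$ via Lemma~\ref{c_S(H)} and $c_{S(X)}(F)=c_{S(X)}(\langle F\rangle)$, and for $\kappa>\omega$ the coset-space construction $X=\bigsqcup_\alpha H/S_\alpha$ with the image $\bar H\leq S(X)$ witnessing $(b)$. Your treatment of $(c)\Rightarrow(b)$ is in fact more complete than the paper's, which merely asserts that self-normalization and non-conjugacy pass to $\widetilde H=\varphi(H)$; you supply the verification through the correspondence theorem using $K\trianglelefteq H$ and $K\leq S_\alpha$.
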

\begin{proof}
(a)$\Leftrightarrow$(b) The condition $S(X)\in\mathfrak T$ is equivalent to the existence of a finite subset $F$ of $S(X)$ such that $c_{S(X)}(F)=\{id_X\}$. Let $H=\langle F\rangle$. Then $$c_{S(X)}(H)=c_{S(X)}(F)$$ and so equivalently $c_{S(X)}(H)=\{id_X\}$. By Lemma \ref{c_S(H)} we have the conclusion.

(b)$\Rightarrow$(c) Since $\kappa>\omega$, and each $O_H(x)$ is countable, $|R_H|=\kappa$. So $\{S_x:x\in R_H\}$ is the family requested in (c).

(c)$\Rightarrow$(b) Since $\kappa>\omega$ and $H$ is countable, we can suppose that $\mathfrak S$ has the property that $S_\alpha$ and $S_\beta$ are not conjugated in $H$ for every $\alpha,\beta<\kappa$ with $\alpha\neq\beta$. Indeed, every subgroup $S_\alpha$ of $H$ has at most countably many conjugated subgroups in $H$, so we can restrict the family $\mathfrak S$ taking only one element for every class of conjugation, finding a subfamily of the same cardinality $\kappa$ as $\mathfrak S$.

Define $X_\alpha:=\{h S_\alpha:h\in H\}$ for every $\alpha<\kappa$ and $X:=\bigcup_{\alpha<\kappa}X_\alpha$. Moreover let $x_\alpha:=id_H S_\alpha\in X_\alpha$ for every $\alpha>\kappa$. In particular $|X|=\kappa$. Moreover $H$ acts on $X$ by multiplication on the left and $O_H(x_\alpha)=X_\alpha$ for every $\alpha<\kappa$. There exists a group homomorphism $\varphi:H\to S(X)$; let $\widetilde H:=\varphi(H)\leq S(X)$.
Then $\widetilde H$ is finitely generated and the action of $\widetilde H$ on $X$ is the same as the action of $H$ on $X$. Then $O_{\widetilde H}(x_\alpha)=X_\alpha$ for every $\alpha<\kappa$ and $R_{\widetilde H}=\{x_\alpha:\alpha<\kappa\}$. Moreover $\varphi(S_\alpha)=\text{Stab}\, x_\alpha\cap \widetilde H=:S_{x_\alpha}$. Since $S_\alpha=N_H(S_\alpha)$ for every $\alpha<\kappa$ and $S_\alpha$ and $S_\beta$ are not conjugated for every $\alpha<\beta<\kappa$, it is possible to prove that $S_{x_\alpha}=N_H(S_{x_\alpha})$ for every $x_\alpha\in R_{\widetilde H}$ and $S_{x_\alpha}$ and $S_{x_\beta}$ are not conjugated for every $x_\alpha,x_\beta\in R_{\widetilde H}$ with $x_\alpha\neq x_\beta$. So the properties in (b) are satisfied.
\end{proof}

\begin{theo}\label{S(X)_Taimanov}
Let $X$ be a set with $|X|>2$.
\begin{itemize}
	\item[(a)]If $|X|\leq\omega$, then $S(X)\in\mathfrak T$.
	\item[(b)]If $|X|>\mathfrak c$, then $S(X)	\not\in\mathfrak T$.
\end{itemize}
\end{theo}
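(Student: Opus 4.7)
The plan is to reduce both parts to the existence, or failure, of a finite subset $F\subseteq S(X)$ with $c_{S(X)}(F)=\{id_X\}$, since $S(X)\in\mathfrak T$ holds exactly when such an $F$ exists; setting $H:=\langle F\rangle$ gives $c_{S(X)}(F)=c_{S(X)}(H)$, which can then be analysed via Lemma \ref{c_S(H)}.

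For part (a) with $|X|=n$ finite and $3\leq n<\omega$, take $F$ to be any two-element generating set of $S_n$ (for instance a transposition together with an $n$-cycle). Then $c_{S_n}(F)=c_{S_n}(S_n)=Z(S_n)=\{id_X\}$, so $S(X)\in\mathfrak T$. For $|X|=\omega$, identify $X$ with $\Z$ and take $F=\{\sigma,\tau\}$, where $\sigma(n)=n+1$ is the shift and $\tau=(0,1)$ is a transposition. A direct computation shows that $\rho\sigma=\sigma\rho$ forces $\rho(n)=n+k$ for $k:=\rho(0)$, while $\rho\tau=\tau\rho$ then forces $k=0$ via short case analysis (the subcases $k=0$, $k=1$, and $k\notin\{0,1\}$ are separated, the latter two leading to immediate contradictions). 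Hence $c_{S(\Z)}(F)=\{id_\Z\}$ and $S(\Z)\in\mathfrak T$.

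For part (b), fix any finite $F\subseteq S(X)$ and set $H:=\langle F\rangle$; then $H$ is countable. Since every $H$-orbit has cardinality at most $|H|\leq\omega$, we have $|R_H|\cdot\omega\geq|X|$, hence $|R_H|\geq|X|>\mathfrak c$. On the other hand, the countable group $H$ has at most $2^{\omega}=\mathfrak c$ subgroups, and therefore at most $\mathfrak c$ conjugacy classes of subgroups. By the pigeonhole principle there exist distinct $x,y\in R_H$ with $S_x$ conjugate to $S_y$ in $H$, so condition (b) of Lemma \ref{c_S(H)} fails and $c_{S(X)}(H)=c_{S(X)}(F)\neq\{id_X\}$. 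Since $F$ was arbitrary, no finite subset of $S(X)$ has trivial centralizer, i.e.\ $T_{S(X)}\neq\delta_{S(X)}$ and $S(X)\notin\mathfrak T$.

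The main obstacle is the countable case of (a): producing a concrete two-generated subgroup of $S(\Z)$ with trivial centralizer. The shift-plus-transposition choice is natural but hides the case analysis noted above; alternatively one could invoke Proposition \ref{family}(b) and exhibit an $H$ whose point stabilizers are self-normalizing and pairwise non-conjugate in $H$, but verifying these conditions for a concrete $H$ is essentially the same computation, so the direct centralizer argument is preferable.
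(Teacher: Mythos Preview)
Your proof is correct and follows essentially the same strategy as the paper: reduce to finding (or ruling out) a finite $F$ with trivial centralizer, and for (b) use the countability of $H=\langle F\rangle$ together with the pigeonhole on stabilizers to violate Lemma~\ref{c_S(H)}. The only notable difference is in the countable case of (a): the paper takes $\tau=(-1,1)$, observes that $H=\langle\sigma,\tau\rangle$ acts transitively on $\Z$ with $S_0=\langle\tau\rangle=N_H(S_0)$, and concludes via Proposition~\ref{family}; you instead take $\tau=(0,1)$ and compute the centralizer directly. Your route is slightly more elementary (it avoids checking self-normalization of the stabilizer), while the paper's route illustrates the structural criterion it has just developed; both are short and either is fine.
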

\begin{proof}
(a) Assume that $2<|X|<\omega$. Since $Z(S(X))$ is trivial, $S(X)\in\mathfrak T$ by Example \ref{finite-taimanov}(a).

Assume that $|X|=\omega$. We can suppose $X=\mathbb Z$.
Let $H=\langle \sigma,\tau\rangle$, where $\tau=(-1,1)$ and $\sigma$ is the shift, that is $\sigma(n)=n+1$ for every $n\in\mathbb Z$. Then $O_H(0)=\Z$ and so $R_H=\{0\}$. Moreover $S_0=\langle\tau\rangle$ and hence $N_H(S_0)=S_0$. By Proposition \ref{family} $S(X)\in\mathfrak T$.

(b) Let $H$ be a finitely generated subgroup of $S(X)$. Since $O_H(x)$ is countable for every $x\in R_H$, $|R_H|=|X|>\mathfrak c$.
Since $H$ is countable, it has at most $\mathfrak c$ subgroups and so there exists a subset $S$ of $R_H$ such that $|S|>\mathfrak c$ and $S_x=S_y$ for every $x,y\in S$. By Proposition \ref{family} $S(X)\not\in\mathfrak T$.
\end{proof}

\begin{question}\label{S(X)inT?}
Let $X$ be a set.
\begin{itemize}
	\item[(a)] Is $S(X)\in\mathfrak T$ if $|X|=\omega_1$?
	\item[(b)] Is $S(X)\in\mathfrak T$ if $|X|=\mathfrak c$?
	\item[(c)] Is $S(X)\in\mathfrak T$ if $\omega<|X|\leq\mathfrak c$?
\end{itemize}
\end{question}

\begin{rem}
Question \ref{S(X)inT?} can be formulated in equivalent terms thanks to Proposition \ref{family}. Indeed, if $X$ is a set of cardinality $\kappa$ with $\omega<\kappa\leq\mathfrak c$, then $S(X)\in\mathfrak T$ if and only if there exists a finitely generated group $H$ admitting a family $\mathfrak S=\{S_\alpha:\alpha<\kappa\}$ of subgroups of $H$ such that $S_\alpha=N_H(S_\alpha)$ for every $\alpha<\kappa$.

So Question \ref{S(X)inT?} becomes: does there exists a finitely generated group $H$ with a ``large'' (i.e., of cardinality $\kappa$ with $\omega<\kappa\leq \mathfrak c$) family of self-normalizing subgroups?
\end{rem}

\section{Problem A}\label{pbA}

We start considering a stability property of the the class $\mathfrak A$ of Arnautov groups.

\begin{theo}\label{A-quotients}
The class $\mathfrak A$ is closed under taking quotients.
\end{theo}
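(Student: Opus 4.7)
The plan is to reduce the statement about a quotient $G/N$ to the corresponding statement about $G$ via initial topologies, exploiting Lemma \ref{initial}. Let $G \in \mathfrak{A}$, let $N$ be a normal subgroup of $G$, and let $\pi: G \to G/N$ denote the canonical projection. To show $G/N \in \mathfrak{A}$, I pick a pair of group topologies $\bar\sigma \leq \bar\tau$ on $G/N$ such that $id_{G/N}: (G/N, \bar\tau) \to (G/N, \bar\sigma)$ is semitopological, and aim to conclude $\bar\tau = \bar\sigma$.

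First, lift the topologies to $G$: let $\tau_i$ and $\sigma_i$ be the initial topologies on $G$ induced by $\pi$ from $\bar\tau$ and $\bar\sigma$, respectively. These are group topologies on $G$ (since the initial topology of a group topology under a group homomorphism is a group topology), and clearly $\sigma_i \leq \tau_i$. By Lemma \ref{initial}, the semitopologicality of $id_{G/N}: (G/N, \bar\tau) \to (G/N, \bar\sigma)$ is equivalent to the semitopologicality of $id_G: (G, \tau_i) \to (G, \sigma_i)$. Since $G \in \mathfrak{A}$, A-completeness of $\tau_i$ forces $\tau_i = \sigma_i$.

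It remains to descend this equality back to $G/N$. For this I use the fact that the quotient topology on $G/N$ obtained from the initial topology $\tau_i$ coincides with $\bar\tau$: if $V \subseteq G/N$, then $V$ is open in the quotient topology of $\tau_i$ iff $\pi^{-1}(V)$ is $\tau_i$-open iff $\pi^{-1}(V) = \pi^{-1}(W)$ for some $\bar\tau$-open $W \subseteq G/N$; since $\pi$ is surjective, this forces $V = W$ and hence $V$ is $\bar\tau$-open. The same holds with $\bar\sigma$ in place of $\bar\tau$. Thus $\tau_i = \sigma_i$ implies $\bar\tau = \bar\sigma$, which gives the desired A-completeness of $\bar\tau$ on $G/N$, and since $\bar\tau$ was arbitrary, $G/N \in \mathfrak{A}$.

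The argument is essentially a formal bookkeeping of initial-versus-quotient topologies; the nontrivial ingredient, Lemma \ref{initial}, is already established, so there is no serious obstacle beyond verifying that the initial-then-quotient construction returns the original topology, which is immediate from the surjectivity of $\pi$.
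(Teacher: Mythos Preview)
Your proof is correct and follows essentially the same approach as the paper: lift the topologies on $G/N$ to their initial topologies on $G$, apply Lemma \ref{initial} to transfer semitopologicality, use $G\in\mathfrak A$ to conclude $\tau_i=\sigma_i$, and then recover $\bar\tau=\bar\sigma$. The only difference is that you spell out explicitly why $\tau_i=\sigma_i$ forces $\bar\tau=\bar\sigma$, whereas the paper simply asserts it.
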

\begin{proof}
Let $G\in\mathfrak A$ and let $N$ be a normal subgroup of $G$. Let $\sigma\leq\tau$ be group topologies on $G/N$ such that $id_{G/N}:(G/N,\tau)\to (G/N,\sigma)$ is semitopological. Then $id_G:(G,\tau_i)\to (G,\sigma_i)$ is semitopological by Lemma \ref{initial}. Since $G\in\mathfrak A$, $\tau_i=\sigma_i$ and hence $\tau=\sigma$.
\end{proof}

In Section \ref{totally_Markov} we will comment the stability of $\mathfrak A$ under taking subgroups and products.

\begin{exmp}\label{Ex_x}
\begin{itemize}
	\item[(a)]Obviously every indiscrete group $G$ is A-complete.
	\item[(b)]Let $G$ be a group. Let $G_{ab}=G/G'$ be the abelianization of $G$ and endow $G_{ab}$ with the discrete topology and with the indiscrete topology:
\begin{equation*}
\begin{CD}
(G,\zeta_{G'}) @>>> (G_{ab},\delta_{G_{ab}})\\ @V{id_G}VV
@VV{id_{G_{ab}}}V\\ (G,\iota_G) @>>> (G_{ab},\iota_{G_{ab}})
\end{CD}
\end{equation*}
If $G\neq G'$ then $id_{G_{ab}}$ is a semitopological non-open isomorphism, because $G_{ab}$ is abelian, and $id_G$ is a semitopological non-open isomorphism too, in view of Remark \ref{ind-remark}. So $(G,\zeta_{G'})$ is not A-complete.
	\item[(c)]An abelian topological group $G$ is A-complete if and only if $G$ is indiscrete. In particular the only abelian Arnautov group is $G=\{e_G\}$ (as $(G,\delta_G)$ must be indiscrete).
\end{itemize}
\end{exmp}

The next proposition generalizes the example in (b).

\begin{prop}
A topological group $G$ with indiscrete derived group $G'$ is A-complete precisely when $G$ is indiscrete. 
\end{prop}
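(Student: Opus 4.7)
The plan is to observe that the statement reduces directly to Theorem \ref{ind}(b) combined with the definition of A-completeness, so both directions follow in essentially one line. First, the ``if'' direction is immediate from Example \ref{Ex_x}(a): every indiscrete group is A-complete.

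For the ``only if'' direction, I would argue as follows. Suppose $(G,\tau)$ has $G'$ indiscrete. By definition of the induced topology, this means every $\tau$-neighborhood of $e_G$ swallows $G'$, equivalently $G' \leq N_\tau$. By Theorem \ref{ind}(b) (equivalently Remark \ref{ind-remark}), this is precisely the condition under which the continuous isomorphism $id_G\colon (G,\tau) \to (G,\iota_G)$ is semitopological. Since $\iota_G \leq \tau$ trivially, A-completeness of $\tau$ forces $\iota_G = \tau$, so $G$ is indiscrete.

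I do not expect any real obstacle: the whole content of the proposition is the availability of $\iota_G$ as the ``test topology'' in the definition of A-completeness, combined with the characterization of semitopological isomorphisms into the indiscrete topology. It is worth noting that the earlier Example \ref{Ex_x}(b) is literally the special case $\tau = \zeta_{G'}$, since $N_{\zeta_{G'}} = G'$; the present proposition merely points out that the same obstruction applies to \emph{any} group topology on $G$ in which $G'$ is indiscrete, and the proof mechanism is identical.
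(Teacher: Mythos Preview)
Your proof is correct and follows essentially the same approach as the paper: the paper's proof is the single line ``The conclusion follows from Remark \ref{ind-remark},'' and your argument unpacks exactly this, using the equivalent formulation via Theorem \ref{ind}(b) (that $G'$ indiscrete means $G'\leq N_\tau$, hence $id_G:(G,\tau)\to(G,\iota_G)$ is semitopological, forcing $\tau=\iota_G$ by A-completeness).
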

\begin{proof}
The conclusion follows from Remark \ref{ind-remark}.
\end{proof}

\begin{exmp}\label{sin-example}
Let $G$ be a group and $\tau$ a group topology on $G$.
\begin{itemize}
	\item[(a)]If $(G,\tau)$ is SIN, then it is A-complete if and only if for every group topology $\sigma<\tau$ on $G$ there exist $U\in\mathcal V_{(G,\tau)}(e_G)$ and $g\in G$ such that $[g,V_g]\not\subseteq U$ for every $V_g\in\mathcal V_{(G,\sigma)}(e_G)$ (this follows from Proposition \ref{SINsemitop}).
	\item[(b)]If $(G,\tau)$ is Hausdorff and $\tau\leq\zeta_{G'}$ (as already noted after Theorem \ref{ind}, this condition yields $\tau$ SIN), then $G$ is abelian and consequently $\tau>\iota_G$ implies that $(G,\tau)$ is not A-complete (supposing that $G$ is not a singleton).
\end{itemize}
\end{exmp}

\begin{prop}\label{initial-minimal}
Let $G$ be a group and $N$ a normal subgroup of $G$. Let $\tau$ be a group topology on $G/N$ and $\tau_i$ the initial topology of $\tau$ on $G$. Then $\tau$ is A-complete if and only if $\tau_i$ is A-complete.
\end{prop}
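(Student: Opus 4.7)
The plan is to exploit two tools already proved in the paper: the ``if and only if'' of Lemma \ref{initial}, which links semitopological isomorphisms on $G$ with initial topologies to semitopological isomorphisms on $G/N$, and the quotient-stability Theorem \ref{quotientst}(b), which goes in one direction only. Together with the standard fact that the quotient topology of $\tau_i$ on $G/N$ is $\tau$ (so $(\sigma_q)_i\leq \sigma$ and $(\sigma_q)_i=\tau_i$ whenever $\sigma_q=\tau$), these give both implications.

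\textbf{($\Leftarrow$):} Assume $\tau_i$ is A-complete on $G$, and let $\sigma\leq \tau$ be a group topology on $G/N$ such that $id_{G/N}:(G/N,\tau)\to (G/N,\sigma)$ is semitopological. By Lemma \ref{initial} the map $id_G:(G,\tau_i)\to (G,\sigma_i)$ is semitopological, and clearly $\sigma_i\leq \tau_i$. By A-completeness of $\tau_i$ we obtain $\sigma_i=\tau_i$, and taking quotients by $N$ yields $\sigma=\tau$.

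\textbf{($\Rightarrow$):} This direction is the main obstacle, since a generic group topology $\sigma$ on $G$ lying below $\tau_i$ is not a priori an initial topology. Assume $\tau$ is A-complete on $G/N$ and let $\sigma\leq \tau_i$ on $G$ be such that $id_G:(G,\tau_i)\to (G,\sigma)$ is semitopological. Let $\sigma_q$ be the quotient of $\sigma$ on $G/N$; then $\sigma_q\leq \tau$ (since the quotient of $\tau_i$ is $\tau$), and Theorem \ref{quotientst}(b) yields that $id_{G/N}:(G/N,\tau)\to (G/N,\sigma_q)$ is semitopological. By A-completeness of $\tau$ we conclude $\sigma_q=\tau$, hence $(\sigma_q)_i=\tau_i$. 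Now the general inequality $(\sigma_q)_i\leq \sigma$ (valid because $\pi:(G,\sigma)\to (G/N,\sigma_q)$ is continuous, so the $\sigma_q$-initial topology is contained in $\sigma$) combined with $\sigma\leq \tau_i=(\sigma_q)_i$ forces $\sigma=\tau_i$, as required.

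Thus both directions follow routinely once one observes that passing from $\sigma$ to its quotient $\sigma_q$ on $G/N$ and then back via the initial topology functor reproduces $\tau_i$ exactly when $\sigma_q=\tau$, which is precisely the conclusion provided by the A-completeness hypothesis on $G/N$.
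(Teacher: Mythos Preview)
Your proof is correct and follows essentially the same strategy as the paper: Lemma \ref{initial} for the ($\Leftarrow$) direction, and Theorem \ref{quotientst}(b) for the ($\Rightarrow$) direction. The only small difference is in how you finish ($\Rightarrow$): you use the sandwich $(\sigma_q)_i\leq\sigma\leq\tau_i=(\sigma_q)_i$, whereas the paper observes directly that any $\sigma\leq\tau_i$ \emph{is} an initial topology (since $N=N_{\tau_i}\subseteq N_\sigma$, so $N$ is $\sigma$-indiscrete and $\sigma=(\sigma_q)_i$), and then argues by contrapositive. Your sandwich argument bypasses this observation but reaches the same conclusion.
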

\begin{proof}
Let $id_{G/N}:(G/N,\tau)\to (G/N,\sigma)$ be semitopological, where $\sigma\leq\tau$ is another group topology on $G$. By Lemma \ref{initial} also $id_G:(G,\tau_i)\to (G,\sigma_i)$ is semitopological and the hypothesis implies that $\tau_i=\sigma_i$. Consequently $\tau=\sigma$.

Suppose that $\tau$ is A-complete.
Let $\sigma<\tau_i$ be another group topology on $G$ and consider the quotient topology $\sigma_q$ of $\sigma$ on $G/N$. So we have the following situation:
\begin{equation*}
\begin{CD}
(G,\tau_i) @>{id_G}>> (G,\sigma)\\
@V{\pi}VV @VV{\pi}V\\
(G/N,\tau) @>{id_{G/N}}>> (G/N,\sigma_q).
\end{CD}
\end{equation*}

Since $\sigma<\tau_i$, it follows that $N_\sigma\geq N_{\tau_i}=N$. Consequently $\sigma$ is the initial topology of $\sigma_q$ and so $\sigma_q<\tau$, otherwise $\sigma=\tau_i$.
By hypothesis $id_{G/N}:(G/N,\tau)\to(G/N,\sigma_q)$ is not semitopological. To conclude that also $id_G:(G,\tau_i)\to(G,\sigma)$ is not semitopological apply Theorem \ref{quotientst}.
\end{proof}

\begin{cor}\label{t_q->t_A-complete}
Let $G$ be a group and $\tau$ a group topology on $G$. Consider the quotient $G/N_\tau$ and the quotient topology $\tau_q$ of $\tau$ on $G/N_\tau$. Then $\tau$ is A-complete if and only if $\tau_q$ is A-complete.
\end{cor}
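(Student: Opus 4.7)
The plan is to deduce this corollary directly from Proposition \ref{initial-minimal} by taking $N = N_\tau$ and checking that the initial topology on $G$ associated with $\tau_q$ on $G/N_\tau$ coincides with the original topology $\tau$.

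First I would recall the setup of Proposition \ref{initial-minimal}: given a normal subgroup $N$ and a group topology $\rho$ on $G/N$, the proposition asserts that $\rho$ is A-complete if and only if its initial topology $\rho_i$ on $G$ (via the canonical projection $\pi : G \to G/N$) is A-complete. I apply this with $N := N_\tau$ (which is normal in $G$) and $\rho := \tau_q$, the quotient topology on $G/N_\tau$. To invoke the proposition it then suffices to verify the equality $(\tau_q)_i = \tau$.

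The inclusion $(\tau_q)_i \leq \tau$ is clear: $\pi : (G,\tau) \to (G/N_\tau, \tau_q)$ is continuous by definition of the quotient topology, and $(\tau_q)_i$ is by definition the coarsest group topology on $G$ making $\pi$ continuous. For the converse, observe that $N_\tau = \overline{\{e_G\}}^\tau$ is contained in every $\tau$-open neighborhood of $e_G$. Hence, for any $\tau$-open set $U$ and any $u \in U$, the translate $u^{-1}U$ is an open neighborhood of $e_G$, so $N_\tau \subseteq u^{-1}U$ and thus $u N_\tau \subseteq U$; this shows $U \cdot N_\tau = U$. Consequently $\pi^{-1}(\pi(U)) = U$, and since $\pi$ is an open map, $\pi(U)$ is $\tau_q$-open. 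Therefore $U = \pi^{-1}(\pi(U))$ belongs to the initial topology, giving $\tau \leq (\tau_q)_i$.

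Combining both inclusions yields $(\tau_q)_i = \tau$, and Proposition \ref{initial-minimal} then immediately provides the biconditional ``$\tau_q$ is A-complete $\Leftrightarrow$ $\tau$ is A-complete''. The only slightly delicate point is the saturation argument $U \cdot N_\tau = U$ for $\tau$-open $U$, but this is standard and relies only on the characterization of $N_\tau$ as the intersection of all $\tau$-open neighborhoods of $e_G$; no new commutator or thinness computation is needed since all the hard work has been done in Lemma \ref{initial} and Proposition \ref{initial-minimal}.
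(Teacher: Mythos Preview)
Your proposal is correct and follows exactly the paper's own approach: the paper's proof simply states that $\tau$ is the initial topology of $\tau_q$ and invokes Proposition~\ref{initial-minimal}. Your version additionally spells out the standard saturation argument $U\cdot N_\tau = U$ to justify $(\tau_q)_i=\tau$, which the paper leaves implicit.
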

\begin{proof}
Since $\tau$ is the initial topology of $\tau_q$, it suffices to apply Proposition \ref{initial-minimal}.
\end{proof}

Now we give a necessary condition for a group to be Arnautov.

\begin{prop}\label{perfect}
For a group $G$ the following conditions are equivalent:
\begin{itemize}
\item[(a)] $id_G:(G,\tau)\to(G,\iota_G)$ is semitopological for no group topology $\tau>\iota_G$ on $G$;
\item[(b)]$G$ is perfect.
\end{itemize}
\end{prop}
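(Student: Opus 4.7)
The plan is to invoke Remark \ref{ind-remark} (i.e., the reformulation of Theorem \ref{ind}(b)), which states that $id_G:(G,\tau)\to(G,\iota_G)$ is semitopological precisely when $\tau\leq \zeta_{G'}$. With this characterization in hand, condition (a) is equivalent to saying that there exists no group topology $\tau$ on $G$ satisfying $\iota_G<\tau\leq\zeta_{G'}$; in particular (a) is equivalent to $\zeta_{G'}=\iota_G$.

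For the implication (b)$\Rightarrow$(a): if $G$ is perfect then $G'=G$, so by definition $\zeta_{G'}=\zeta_G=\iota_G$. Hence any $\tau\leq \zeta_{G'}$ is forced to satisfy $\tau\leq \iota_G$, which rules out $\tau>\iota_G$. By Remark \ref{ind-remark}, no such $\tau>\iota_G$ can make $id_G:(G,\tau)\to(G,\iota_G)$ semitopological.

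For the contrapositive (non-(b))$\Rightarrow$(non-(a)): assume $G'\lneq G$. Then the almost trivial topology $\zeta_{G'}$ is strictly finer than $\iota_G=\zeta_G$ (this uses the order-reversing bijection $N\mapsto \zeta_N$ between $\mathcal N(G)$ and $\mathcal{AT}(G)$ recorded in Section \ref{at-sec}). Taking $\tau:=\zeta_{G'}>\iota_G$, Remark \ref{ind-remark} gives that $id_G:(G,\zeta_{G'})\to(G,\iota_G)$ is semitopological, contradicting (a).

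There is no real obstacle here; the entire argument is a direct application of the already-established reformulation of Theorem \ref{ind}(b) together with the elementary observation that $\zeta_{G'}=\iota_G$ if and only if $G'=G$. The only mild subtlety is to note that the witness topology in the converse direction, namely $\zeta_{G'}$, is automatically a legitimate group topology and that it is distinct from $\iota_G$ exactly when $G$ fails to be perfect.
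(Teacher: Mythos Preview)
Your proof is correct and follows essentially the same approach as the paper: both directions hinge on Theorem~\ref{ind}(b) (equivalently Remark~\ref{ind-remark}) together with the observation that $\zeta_{G'}=\iota_G$ precisely when $G$ is perfect, and the witness topology $\tau=\zeta_{G'}$ you use for the contrapositive of (a)$\Rightarrow$(b) is exactly the one the paper uses for its direct proof of that implication.
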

\begin{proof}
(a)$\Rightarrow$(b) Since $id_G:(G,\zeta_{G'})\to(G,\iota_G)$ is a semitopological isomorphism by Theorem \ref{ind}(b), our hypothesis (a) implies $\zeta_{G'}=\iota_G$ and hence $G=G'$. 

(b)$\Rightarrow$(a) Suppose $G=G'$; then $\zeta_{G'}=\iota_G$. If $id_G:(G,\tau)\to(G,\iota_G)$ is a semitopological isomorphism, then $\tau\leq\zeta_{G'}=\iota_G$ by Theorem \ref{ind}(b), so $\tau=\iota_G$. This means that $id_G$ is open.
\end{proof}

Therefore, if a group $G$ is Arnautov, then for every non-indiscrete group topology $\tau$ on $G$ $id_G:(G,\tau)\to(G,\iota_G)$ is not semitopological. In particular Proposition \ref{perfect} implies that every Arnautov group is perfect.

\begin{cor}\label{Coro}
Let $G$ be a simple non-abelian group and $\tau$ a group topology on $G$. If $\tau>\iota_G$, then $id_G:(G,\tau)\to (G,\iota_G)$ is not semitopological.
\end{cor}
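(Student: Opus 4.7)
The plan is to derive this corollary as an immediate consequence of Proposition \ref{perfect} by verifying that a simple non-abelian group is automatically perfect. First I would observe that $G'$ is always a normal subgroup of $G$ (in fact a characteristic one), so the simplicity of $G$ forces $G' \in \{\{e_G\}, G\}$. Since $G$ is non-abelian by hypothesis, $G' \neq \{e_G\}$, hence $G' = G$, i.e., $G$ is perfect.

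Once perfectness of $G$ is established, I would invoke the implication (b)$\Rightarrow$(a) of Proposition \ref{perfect}: for a perfect group, $id_G:(G,\tau)\to(G,\iota_G)$ fails to be semitopological whenever $\tau > \iota_G$. Applying this to our $\tau > \iota_G$ gives the desired conclusion.

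There is really no obstacle here; the whole content is the two-line observation that simple non-abelian $\Rightarrow$ perfect, after which Proposition \ref{perfect} does all the work. The only thing to watch is that the statement of Proposition \ref{perfect} uses the strict inequality $\tau>\iota_G$ in the same sense as the corollary, so the quantifiers line up with no further adjustment. Thus the proof can be written in essentially one sentence, noting also that this corollary explains why simple non-abelian Markov groups will turn out to be Arnautov (as previewed after Proposition \ref{perfect}).
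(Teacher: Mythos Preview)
Your proposal is correct and matches the paper's approach: the corollary is stated without proof precisely because it follows immediately from Proposition \ref{perfect} once one notes that a simple non-abelian group is perfect. Your two-line verification that simple non-abelian $\Rightarrow$ perfect is exactly the missing observation the reader is expected to supply.
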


A consequence of these results is that every minimal simple non-abelian group $(G,\tau)$ is A-complete. Indeed, if $\sigma\leq\tau$ is another group topology on $G$ and $id_G:(G,\tau)\to(G,\sigma)$ is semitopological, then by Lemma \ref{simplenonabrem} either $\sigma$ is Hausdorff or $\sigma=\iota_G$. Since $G$ is simple and non-abelian, $G$ is perfect. Then Proposition \ref{perfect} implies that $\sigma$ is not indiscrete and so $\sigma$ has to be Hausdorff. The minimality of $\tau$ yields that $\sigma=\tau$.

This consequence is improved by the next result.

\begin{prop}\label{min,z01,a-min}
If $(G,\tau)$ is a minimal group and $Z(G)=\{e_G\}$, then $(G,\tau)$ is A-complete.
\end{prop}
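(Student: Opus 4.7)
The plan is to combine the minimality of $\tau$ with Proposition \ref{t2,z=1,t2} essentially verbatim. Let $\sigma \leq \tau$ be a group topology on $G$ such that $id_G : (G,\tau) \to (G,\sigma)$ is semitopological; I must show $\tau = \sigma$.

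First, since $(G,\tau)$ is minimal, $\tau$ is in particular Hausdorff. Then the hypothesis $Z(G) = \{e_G\}$, together with Proposition \ref{t2,z=1,t2}, immediately yields that $\sigma$ is Hausdorff as well (this is the only place where the triviality of the center is used). At this point I have a continuous isomorphism $id_G : (G,\tau) \to (G,\sigma)$ onto a Hausdorff topological group, so the minimality of $\tau$ forces $id_G$ to be a topological isomorphism, i.e.\ $\tau = \sigma$, as required.

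There is no real obstacle: the argument is a direct chain of implications, and the key point is exactly the step already packaged in Proposition \ref{t2,z=1,t2}, which prevents $\sigma$ from collapsing to a non-Hausdorff topology (which would otherwise lie outside the scope of the minimality definition). Nothing else in the excerpt needs to be invoked.
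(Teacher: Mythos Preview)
Your proof is correct and follows essentially the same approach as the paper: assume $id_G:(G,\tau)\to(G,\sigma)$ is semitopological, apply Proposition~\ref{t2,z=1,t2} to conclude $\sigma$ is Hausdorff, and then invoke minimality of $\tau$ to get $\sigma=\tau$. Your version is slightly more explicit in noting that minimality entails $\tau$ Hausdorff (a hypothesis of Proposition~\ref{t2,z=1,t2}), but otherwise the arguments coincide.
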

\begin{proof}
Let $\sigma\leq\tau$ be a group topology on $G$ and suppose that $id_G:(G,\tau)\to(G,\sigma)$ is semitopological. By Proposition \ref{t2,z=1,t2} $\sigma$ is Hausdorff and so $\sigma=\tau$ by the minimality of $\tau$.
\end{proof}

\begin{exmp}\label{simple-finite}
	Every simple finite non-abelian group $G$ is an Arnautov group. Indeed, the only group topologies on $G$ are the trivial ones and $id_G:(G,\delta_G)\to(G,\iota_G)$ is not semitopological by Corollary \ref{Coro}.
\end{exmp}

The following remark could be used as a test to verify if a group is Arnautov.

\begin{rem}
If $G\in\mathfrak A$, then for every group topology $\tau$ on $G$ and for every normal subgroup $N$ of $G$,
\begin{itemize}
	\item $id_G: (G,\sup\{\tau,\zeta_N\})\to (G,\tau)$ is not semitopological if $\sup\{\tau,\zeta_N\}>\tau$;
	\item $id_G:(G,\sup\{\tau,\zeta_N\})\to(G,\zeta_N)$ is not semitopological if $\sup\{\tau,\zeta_N\}>\zeta_N$.
\end{itemize}
\end{rem}

\subsection{When the discrete topology is A-complete}

\begin{rem}\label{discrhom}{\cite[Corollary 5.3]{AGB}}
We can formulate Theorem \ref{discrete}(a) in terms of the Ta\u\i manov topology:
\begin{quote}
{Let $G$ be a group and $\sigma$ a group topology on $G$. Then $id_G:(G,\delta_G)\to (G,\sigma)$ is semitopological if and only if $\sigma\geq T_G$, that is, $N_\sigma\leq N_{T_G}=Z(G)$.}
\end{quote}
\end{rem}

Consequently the Ta\u\i manov topology is the coarsest topology $\sigma$ on a group $G$ such that $id_G: (G,\delta_G) \to (G,\sigma)$ is semitopological. So, since in this section we consider the case when the discrete topology is A-complete, we have to impose that the Ta\u\i manov topology is discrete, that is, the group is Ta\u\i manov. This also motivates Definition \ref{tai-def}.

\smallskip
The next theorem solves a particular case of Problem A, that is, it characterizes the groups for which the discrete topology is A-complete.

\begin{theo}\label{discrAminz1}
Let $G$ be a group. Then $\delta_G$ is A-complete if and only if $G\in\mathfrak T$.
\end{theo}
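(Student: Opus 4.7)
The plan is to reduce the theorem to Remark \ref{discrhom}, which is the key ingredient already provided in the excerpt. That remark states that a continuous isomorphism $id_G:(G,\delta_G)\to(G,\sigma)$ is semitopological precisely when $\sigma\geq T_G$. This characterization makes the Ta\u\i manov topology $T_G$ the coarsest witness that could obstruct A-completeness of the discrete topology, so both directions become immediate.

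For the implication ($\Leftarrow$), I would start with a group topology $\sigma\leq\delta_G$ such that $id_G:(G,\delta_G)\to(G,\sigma)$ is semitopological. By Remark \ref{discrhom} we then have $\sigma\geq T_G$. Under the hypothesis $G\in\mathfrak T$, i.e., $T_G=\delta_G$, this forces $\sigma\geq\delta_G$, and combined with $\sigma\leq\delta_G$ yields $\sigma=\delta_G$. Hence $\delta_G$ is A-complete by Definition \ref{Def_A-min}.

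For the converse ($\Rightarrow$), I would exhibit the Ta\u\i manov topology itself as the test topology. Clearly $T_G\leq\delta_G$, and by Remark \ref{discrhom} the map $id_G:(G,\delta_G)\to(G,T_G)$ is semitopological (since the condition $T_G\geq T_G$ holds trivially). If $\delta_G$ is A-complete, applying Definition \ref{Def_A-min} to $\sigma:=T_G$ yields $T_G=\delta_G$, so $G\in\mathfrak T$.

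There is essentially no obstacle once Remark \ref{discrhom} is in hand, since the theorem is really a direct translation between the two statements through that characterization; the subtlety is entirely absorbed in Theorem \ref{discrete}(a), which underlies the remark. The only care needed is to notice that the A-completeness of $\delta_G$ is an a priori statement about \emph{all} coarser topologies, while the converse argument merely needs one well-chosen witness, namely $T_G$ itself.
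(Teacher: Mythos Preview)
Your proof is correct and follows essentially the same approach as the paper's own proof: both directions are reduced to Remark~\ref{discrhom}, with $T_G$ serving as the witness topology for the ($\Rightarrow$) direction and the inequality $\sigma\geq T_G=\delta_G$ forcing openness for ($\Leftarrow$). The only cosmetic difference is that the paper phrases the ($\Rightarrow$) direction contrapositively (if $\delta_G>T_G$ then $\delta_G$ is not A-complete), while you argue it directly.
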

\begin{proof} 
Suppose that $\delta_G>T_G$. Then $id_G:(G,\delta_G)\to (G,T_G)$ is semitopological by Remark \ref{discrhom}. This proves that $\delta_G$ is not A-complete. Suppose that $\delta_G=T_G$. Let $\tau<\delta_G$ be a group topology on $G$. Then $id_G:(G,\delta_G)\to (G,\tau)$ is not semitopological by Remark \ref{discrhom}. This proves that $\delta_G$ is A-complete.
\end{proof}

By Proposition \ref{zg=1}(a) the equivalent conditions of this theorem imply that the group has trivial center. The next example shows that they can be strictly stronger than having trivial center. Moreover this is an example of a Ta\u\i manov group which has an infinite non-abelian subgroup that is not Ta\u\i manov.

\begin{exmp}
Consider $S(\mathbb N)$ and let $G:=S_\omega(\mathbb N)$ be the subgroup of $S(\mathbb N)$ of the permutations with finite support, that is $S_\omega=\bigcup_{n=1}^\infty S_n$. Then $Z(G)=\{e_G\}$. If $F$ is a finite subset of $G$, then there exists $n\in\mathbb N_+$ such that $F\subseteq S_n$ and $c(S_n)=S(\mathbb N\setminus\{1,\ldots,n\})$ is infinite. Therefore $T_G<\delta_G$ and so $G\not\in \mathfrak T$.
\end{exmp}

Anyway in the finite case the three conditions are equivalent, as stated by Example \ref{finite-taimanov}(a).

\medskip
The next theorem characterizes the almost trivial topologies that are A-complete. It covers Theorem \ref{discrAminz1}.

\begin{theo}\label{almdiscr-solution}
Let $G$ be a group and $N\triangleleft G$. Then $(G,\zeta_N)$ is A-complete if and only if $G/N\in\mathfrak T$.
\end{theo}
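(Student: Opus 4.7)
The plan is to reduce the statement to Theorem \ref{discrAminz1} (the case $N = \{e_G\}$) by passing to the quotient modulo $N_{\zeta_N}$, as permitted by Corollary \ref{t_q->t_A-complete}. The key observation is that the subgroup $N_{\zeta_N} = \overline{\{e_G\}}^{\zeta_N}$ equals $N$ itself, since $N$ is precisely the smallest $\zeta_N$-open neighborhood of $e_G$.

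With this identification, Corollary \ref{t_q->t_A-complete} asserts that $\zeta_N$ is A-complete on $G$ if and only if the quotient topology $(\zeta_N)_q$ on $G/N_{\zeta_N} = G/N$ is A-complete. The remaining routine step is to compute $(\zeta_N)_q$: by Lemma \ref{almdiscrPP}(b) applied with $N_0 = N$, the quotient topology of $\zeta_N$ on $G/N$ is almost trivial and equals $\zeta_{N\cdot N/N} = \zeta_{\{e_{G/N}\}} = \delta_{G/N}$.

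Combining these two reductions, $(G,\zeta_N)$ is A-complete if and only if $(G/N,\delta_{G/N})$ is A-complete. Applying Theorem \ref{discrAminz1} to the group $G/N$ now yields the equivalence with $G/N \in \mathfrak T$, which is the desired conclusion. Note that taking $N = \{e_G\}$ recovers Theorem \ref{discrAminz1}, as announced in the statement.

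No step here presents a real obstacle: the only nontrivial ingredient, Corollary \ref{t_q->t_A-complete}, has already been established, and the identification of the quotient $(\zeta_N)_q$ with $\delta_{G/N}$ is a direct application of the lattice description of almost trivial topologies from Lemma \ref{almdiscrPP}(b). The proof is therefore essentially a bookkeeping argument stringing these two results together.
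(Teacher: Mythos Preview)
Your proof is correct and follows essentially the same approach as the paper: both reduce to Theorem~\ref{discrAminz1} via the identification of $\zeta_N$ with the initial/quotient topology associated to $\delta_{G/N}$. The only cosmetic difference is that the paper invokes Proposition~\ref{initial-minimal} directly for one implication and its Corollary~\ref{t_q->t_A-complete} for the other, whereas you use the corollary (an equivalence) for both directions; the content is the same.
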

\begin{proof}
Suppose that $\zeta_N$ is A-complete. Since $\zeta_N$ is the initial topology of $\delta_{G/N}$, it follows that $\delta_{G/N}$ is A-complete by Proposition \ref{initial-minimal}. By Theorem \ref{discrAminz1} this is equivalent to $G/N\in\mathfrak T$.

Suppose now that $G/N\in\mathfrak T$. By Theorem \ref{discrAminz1} this is equivalent to say that $\delta_{G/N}$ is A-complete and so $\zeta_N$ is A-complete by Corollary \ref{t_q->t_A-complete}.
\end{proof}

The next theorem offers a relevant necessary condition for a group to be Arnautov:

\begin{theo}\label{perfect-taimanov}
If $G\in\mathfrak A$, then $G\in\mathfrak T_t$.
\end{theo}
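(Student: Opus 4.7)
The plan is to derive the statement as a short chain combining two earlier results: Theorem~\ref{A-quotients} (closure of $\mathfrak{A}$ under quotients) and Theorem~\ref{discrAminz1} (the characterization of A-completeness of the discrete topology).

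First I would observe that if $G\in\mathfrak{A}$, then, by the very definition of Arnautov group, \emph{every} group topology on $G$ is A-complete; in particular, the discrete topology $\delta_G$ is A-complete. By Theorem~\ref{discrAminz1}, A-completeness of $\delta_G$ is equivalent to $G\in\mathfrak{T}$. So $G\in\mathfrak{A}$ already gives the ``base'' conclusion $G\in\mathfrak{T}$.

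To upgrade this to $G\in\mathfrak{T}_t$, I need to verify $G/N\in\mathfrak{T}$ for an arbitrary normal subgroup $N$ of $G$. Here I invoke Theorem~\ref{A-quotients}: since $\mathfrak{A}$ is closed under taking quotients, $G/N\in\mathfrak{A}$. Applying the previous paragraph with $G/N$ in place of $G$ yields $G/N\in\mathfrak{T}$, which by Definition~\ref{tai-def} is exactly the statement that $G\in\mathfrak{T}_t$.

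There is no real obstacle: the argument is a direct concatenation of Theorems~\ref{A-quotients} and~\ref{discrAminz1}, once one notices that Arnautov-ness of $G$ is strictly stronger than A-completeness of the single topology $\delta_G$, so the characterization of the latter via $\mathfrak{T}$ transfers to every quotient by the quotient-stability of $\mathfrak{A}$.
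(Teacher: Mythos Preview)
Your proof is correct and follows exactly the paper's approach: the paper's own proof simply reads ``The conclusion follows from Theorems~\ref{A-quotients} and~\ref{discrAminz1},'' and you have spelled out precisely this chain.
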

\begin{proof}
The conclusion follows from Theorems \ref{A-quotients} and \ref{discrAminz1}.
\end{proof}

So the next question naturally arises.

\begin{question}\label{perfect-taimanov_question}
Does $G\in\mathfrak T_t$ imply $G\in\mathfrak A$?
\end{question}

We shall give a positive answer to this question in a particular case in Proposition \ref{totM,A-complete,Arnautov}.

\medskip
The next examples show that a group can admit two A-complete topologies that are one strictly finer than the other.

\begin{exmp}\label{S(Z)}
Let $G:=S(\mathbb Z)$ and $S:=S_\omega(\Z)>A:=A_\omega(\Z)$, which are the only proper normal subgroups of $G$.
\begin{itemize}
	\item[(a)]The point-wise convergence topology $\mathcal T$ on $G$ is A-complete: $\mathcal T$ is minimal and $Z(G)$ is trivial, so Proposition \ref{min,z01,a-min} applies.
	\item[(b)]The discrete topology $\delta_G$ is A-complete by Theorems \ref{S(X)_Taimanov} and \ref{discrAminz1}.
	\item[(c)]We show that $Z(G/A)=S/A$ and $|S/A|=2$. The group $S/A$ has only one non-trivial element, that is, $S/A=\langle \pi(\tau)\rangle$, where $\pi:G\to G/A$ is the canonical projection and $\tau=(12)\in G$. Indeed, if $\sigma\in S$ and $\sigma\not\in A$, then $\tau\sigma\in A$ and so $\pi(\sigma)\in\langle \pi(\tau)\rangle$. Moreover $\tau\not \in A$. Since $S/A$ is a non-trivial normal subgroup of $G/A$ and it has size $2$, it is central; since $S/A$ is the unique non-trivial normal subgroup of $G/A$, $S/A=Z(G/A)$.
	\item[(d)]It follows from (c) that $G\not\in\mathfrak T$ by Proposition \ref{zg=1}(a).
	\item[(e)]By (d) $\zeta_A$ is not A-complete in view of Theorem \ref{almdiscr-solution}, hence $G\not\in\mathfrak A$.
	\item[(f)]Moreover it is possible to prove that $G/S\in\mathfrak T$. Consequently $G/S\in\mathfrak T_t$, being simple.
\end{itemize}
\end{exmp}

This is an example of a group $G$ which is not Arnautov but with $\delta_G$ A-complete. Moreover, since the subgroup of $G$ generated by the shift $\sigma$ is abelian and so not A-complete, while $\delta_G$ is A-complete, this example shows also that a subgroup of an A-complete group need not be A-complete.

\begin{exmp}
Consider the group $G:=SO_3(\mathbb R)$. As shown by Example \ref{SO_3(R)inT}, $G\in\mathfrak T$. Consequently $\delta_G$ is A-complete by Theorem \ref{discrAminz1}. Moreover the usual compact topology $\tau$ of $G$ is A-complete, because $\tau$ compact implies minimal, $Z(G)$ is trivial and so Proposition \ref{min,z01,a-min} applies.
\end{exmp}

A first step to find an answer to Question \ref{perfect-taimanov_question} is to consider the following.

\begin{question}\label{S(Z)/S}
\begin{itemize}
	\item[(a)] Does $S(\Z)/S_\omega(\Z)\in\mathfrak A$?
	\item[(b)] Does $SO_3(\mathbb R)\in\mathfrak A$?
\end{itemize}
\end{question}

\subsection{Totally Markov groups}\label{totally_Markov}

Our aim is to provide examples of groups in ${\mathfrak A}$. 

\medskip
The next results shows that for totally Markov groups the topologies are all almost trivial and so to verify if a continuous isomorphism of a totally Markov group is semitopological is simple, thanks to Corollary \ref{almdiscrsemitop}.

\begin{prop}\label{allalmdiscr}
A group $G\in\mathfrak M_t$ if and only if every group topology on $G$ is almost trivial.
\end{prop}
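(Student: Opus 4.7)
The plan is to prove the two implications separately, pivoting on the standard Hausdorff quotient $(G/N_\tau,\bar\tau)$ introduced after Lemma~\ref{case1} and on the bijective correspondence between group topologies on $G/N$ and their initial topologies on $G$.

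For the forward direction, assume $G\in\mathfrak{M}_t$ and let $\tau$ be an arbitrary group topology on $G$. I would consider the Hausdorff quotient $(G/N_\tau,\bar\tau)$. Because $N_\tau \triangleleft G$ and $G$ is totally Markov, the group $G/N_\tau$ is Markov, so the Hausdorff topology $\bar\tau$ must coincide with $\delta_{G/N_\tau}$. Thus $\{e_{G/N_\tau}\}$ is $\bar\tau$-open, and pulling this back through the canonical projection $\pi:G\to G/N_\tau$ shows that $N_\tau$ is $\tau$-open, i.e., $\tau$ is almost trivial.

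For the converse, assume every group topology on $G$ is almost trivial, and let $N\triangleleft G$. To show $G/N\in\mathfrak{M}$, take any Hausdorff group topology $\sigma$ on $G/N$ and let $\sigma_i$ denote its initial topology on $G$ under $\pi:G\to G/N$. A direct computation gives $N_{\sigma_i}=\pi^{-1}(\overline{\{e_{G/N}\}}^{\sigma})=\pi^{-1}(\{e_{G/N}\})=N$, using that $\sigma$ is Hausdorff. By the standing assumption $\sigma_i$ is almost trivial, so $N=N_{\sigma_i}$ is $\sigma_i$-open. Since the quotient topology of $\sigma_i$ on $G/N$ coincides with $\sigma$ by construction, $\pi$ is an open map $(G,\sigma_i)\to(G/N,\sigma)$, so $\pi(N)=\{e_{G/N}\}$ is $\sigma$-open. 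Hence $\sigma=\delta_{G/N}$, proving that $G/N$ is Markov.

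Neither direction presents a real obstacle; the only point that deserves verification is the identity $(\sigma_i)_q=\sigma$ used in the backward direction, which is immediate from the definition of the initial topology through $\pi$. Everything else is routine bookkeeping with the operator $\tau\mapsto N_\tau$ and the standard Hausdorff quotient construction.
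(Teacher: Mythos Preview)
Your proof is correct and follows essentially the same approach as the paper. The forward direction is identical, and for the converse the paper argues by contraposition using the same construction (the initial topology $\sigma_i=\pi^{-1}(\sigma)$ with $N_{\sigma_i}=N$), whereas you phrase the same argument directly; this is a cosmetic difference only.
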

\begin{proof}
Suppose that $G\in\mathfrak M_t$ and let $\tau$ be a group topology on $G$. Then the quotient topology of $\tau$ on $G/N_\tau$ is Hausdorff and hence discrete, being $G\in\mathfrak M_t$. So $N_\tau$ is open in $(G,\tau)$ and therefore $\tau$ is almost trivial.

Suppose that the group $G\not \in\mathfrak M_t$. Then there exists a normal subgroup $N$ of $G$ such that there exists a Hausdorff non-discrete group topology $\sigma$ on $G/N$. Let $\pi:G\to G/N$ be the canonical projection and $\tau=\pi^{-1}(\sigma)$. Therefore $N_\tau=N$ (because $N=\bigcap\{V:V\in\mathcal V_{(G/N,\sigma)}(e_{G/N})\}$ in $G/N$). Since $\sigma$ is non-discrete $N$ is not open and so $\tau$ is not almost trivial.
\end{proof}

Proposition \ref{3space-at}, together with Proposition \ref{allalmdiscr}, immediately implies that $\mathfrak M_t$ is closed under extensions:

\begin{defn}
For a class of abstract groups ${\mathcal P}$ one says that ${\mathcal P}$ is {\em closed under extensions}, if a group $G$ belongs to ${\mathcal P}$ whenever $N\in {\mathcal P}$ and $G/N\in {\mathcal P}$ for some normal subgroup $N$ of $G$.
\end{defn}

Moreover we have the same result for $\mathfrak M$:

\begin{theo}\label{c.u.e.}
The classes $\mathfrak M$ and $\mathfrak M_t$ are closed under extensions. In particular, $\mathfrak M$ and $\mathfrak M_t$ are closed under finite direct products.
\end{theo}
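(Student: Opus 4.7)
The plan is to handle $\mathfrak{M}_t$ and $\mathfrak{M}$ separately: the former is essentially an immediate consequence of the two propositions quoted just before the theorem, while the latter requires a short independent argument.

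For $\mathfrak{M}_t$, suppose $N\triangleleft G$ with $N,G/N\in\mathfrak{M}_t$, and let $\tau$ be an arbitrary group topology on $G$. Applying Proposition \ref{allalmdiscr} to $N$ and to $G/N$, both $\tau\restriction_N$ and $\tau_q$ are almost trivial; Proposition \ref{3space-at} (the three space property for almost trivial groups) then forces $(G,\tau)$ to be almost trivial. Since $\tau$ was arbitrary, a second application of Proposition \ref{allalmdiscr} yields $G\in\mathfrak{M}_t$.

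For $\mathfrak{M}$, assume $N,G/N\in\mathfrak{M}$ and let $\tau$ be a Hausdorff group topology on $G$; the task is to show $\tau=\delta_G$. First, $\tau\restriction_N$ is Hausdorff, hence $\tau\restriction_N=\delta_N$ by $N\in\mathfrak{M}$. Next I would invoke the standard fact that every discrete subgroup of a Hausdorff topological group is closed: pick $U\in\mathcal{V}_{(G,\tau)}(e_G)$ with $U\cap N=\{e_G\}$ and a symmetric $V\in\mathcal{V}_{(G,\tau)}(e_G)$ with $V^2\subseteq U$; two distinct points of a translate $xV$ lying in $N$ would produce a non-trivial element of $V^{-1}V\cap N\subseteq U\cap N$, so each $xV$ meets $N$ in at most one point, and Hausdorffness then gives $N=\overline{N}^{\tau}$. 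Consequently $\tau_q$ is Hausdorff on $G/N$, and $G/N\in\mathfrak{M}$ forces $\tau_q=\delta_{G/N}$, i.e.\ $N$ is $\tau$-open. Combining openness of $N$ with discreteness of $\tau\restriction_N$, the singleton $\{e_G\}$ is $\tau$-open, whence $\tau=\delta_G$.

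The ``in particular'' clause follows at once, since $G_1\times G_2$ is an extension of the normal subgroup $G_1\times\{e_{G_2}\}\cong G_1$ by the quotient $G_2$, and an easy induction covers all finite products. The only non-routine step in the whole argument is the closedness of discrete subgroups of Hausdorff groups used for $\mathfrak{M}$; everything else is a matter of assembling the propositions already established.
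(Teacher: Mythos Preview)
Your proof is correct and follows essentially the same approach as the paper's: both handle $\mathfrak{M}_t$ by combining Propositions~\ref{3space-at} and~\ref{allalmdiscr}, and both handle $\mathfrak{M}$ by showing that a discrete normal subgroup of a Hausdorff group is closed, deducing that the quotient topology is Hausdorff hence discrete, and then concluding $\tau=\delta_G$. The only cosmetic difference is that the paper packages the final step as ``$\pi$ is a local homeomorphism'' rather than your ``$N$ is $\tau$-open and $\tau\restriction_N$ discrete,'' and it states the closedness of discrete subgroups without the explicit argument you supply.
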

\begin{proof}
That $\mathfrak M_t$ is closed under extensions is a direct consequence of Propositions \ref{3space-at} and \ref{allalmdiscr}.

Suppose that the group $G$ has a normal subgroup $N$ such that $N\in\mathfrak M$ and $G/N\in {\mathfrak M}$. We show that $G\in \mathfrak M$. To this end let $\tau$ be a Hausdorff group topology on $G$. Then $\tau\restriction_N=\delta_N$. Consequently: 
\begin{itemize}
	\item[(i)]$N$ is closed in $(G,\tau)$, and
	\item[(ii)]$\pi:(G,\tau)\to (G/N,\tau_q)$ is a local homeomorphism.
\end{itemize}
By (i) $(G/N,\tau_q)$ is Hausdorff and so discrete. In view of (ii) $\tau=\delta_G$.
\end{proof}

In view of Theorem \ref{discrAminz1}, a necessary condition for A-completeness of $\delta_G$ for a group $G$ is $Z(G)=\{e_G\}$. For Markov groups also the converse implication holds:

\begin{cor}\label{discrAmin}
Let $G\in\mathfrak M$. Then $G\in\mathfrak T$ if and only if $Z(G)=\{e_G\}$.
\end{cor}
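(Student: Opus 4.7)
The plan is to prove both implications by combining two earlier results about the Ta\u\i manov topology $T_G$. The forward direction $G\in\mathfrak{T}\Rightarrow Z(G)=\{e_G\}$ is already established (without needing the Markov hypothesis) by Proposition \ref{zg=1}(a), so I would simply cite it.

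For the converse direction, suppose $G\in\mathfrak{M}$ and $Z(G)=\{e_G\}$. First I would invoke Lemma \ref{taimanov}(b), which says that $T_G$ is Hausdorff precisely when $Z(G)=\{e_G\}$; thus our hypothesis on the center gives that $T_G$ is a Hausdorff group topology on $G$. Then, since $G\in\mathfrak{M}$, the discrete topology $\delta_G$ is the unique Hausdorff group topology on $G$ (this is the very definition of a Markov group). Hence $T_G=\delta_G$, i.e., $G\in\mathfrak{T}$.

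There is no real obstacle: the corollary is a direct synthesis of the definition of $\mathfrak{M}$ with the Hausdorffness criterion for $T_G$ from Lemma \ref{taimanov}(b), together with the already-known general implication from Proposition \ref{zg=1}(a). The only point to keep clearly in mind is that the implication ``$Z(G)=\{e_G\}\Rightarrow G\in\mathfrak{T}$'' is false in general (as witnessed by $S_\omega(\mathbb{N})$ in the example following Theorem \ref{discrAminz1}), so the Markov hypothesis is genuinely used to force the Hausdorff topology $T_G$ to collapse to $\delta_G$.
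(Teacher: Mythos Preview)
Your proof is correct and follows essentially the same route as the paper. The backward direction is identical (Lemma \ref{taimanov}(b) gives $T_G$ Hausdorff, then the Markov hypothesis forces $T_G=\delta_G$); for the forward direction the paper cites Theorem \ref{discrAminz1} together with the remark preceding the corollary, while you cite Proposition \ref{zg=1}(a) directly, which is arguably cleaner but amounts to the same thing.
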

\begin{proof}
If $G\in\mathfrak T$, apply Theorem \ref{discrAminz1}.

Suppose $Z(G)=\{e_G\}$. Then $T_G$ is Hausdorff by Lemma \ref{taimanov}(b) and so $T_G=\delta_G$.
\end{proof}

In the following proposition we characterize totally Markov groups which are A-complete or Arnautov. In particular it shows that for a totally Markov group it is equivalent to be Arnautov and to be totally Ta\u\i manov, which is precisely the answer to Question \ref{perfect-taimanov_question} in the particular case of totally Markov groups.

\begin{prop}\label{totM,A-complete,Arnautov}
Let $G\in\mathfrak M_t$.
\begin{itemize}
\item[(a)] If $\tau$ is a group topology on $G$, the following conditions are equivalent:
\begin{itemize}
	\item[(i)] $(G,\tau)$ is A-complete;
	\item[(ii)] $G/N_\tau\in\mathfrak T$;
	\item[(iii)] for every $N\triangleleft G$, if $[G,N]\leq N_\tau \leq N$, then $N=N_\tau$.
\end{itemize}
\item[(b)] The following conditions are equivalent:
\begin{itemize}
	\item[(i)] $G\in\mathfrak A$;
	\item[(ii)] $G\in\mathfrak T_t$;
	\item[(iii)] $Z(G/N)=\{e_{G/N}\}$ for every $N\triangleleft G$;
	\item[(iv)] $[G,N]=N$ for every $N\triangleleft G$.
\end{itemize}
\end{itemize}
\end{prop}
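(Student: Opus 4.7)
The plan is to exploit the fact that, by Proposition \ref{allalmdiscr}, every group topology on a totally Markov group $G$ is almost trivial, so every topology $\tau$ on $G$ has the form $\tau=\zeta_{N_\tau}$ for the normal subgroup $N_\tau=\overline{\{e_G\}}^\tau$. This reduces semitopological questions on $G$ to the purely algebraic criterion of Corollary \ref{almdiscrsemitop}, and A-completeness to Theorem \ref{almdiscr-solution}. I will prove (a) first, then use (a)(iii) to handle (b).

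For part (a), the equivalence (i)$\Leftrightarrow$(ii) is immediate from Theorem \ref{almdiscr-solution} applied to $\tau=\zeta_{N_\tau}$. For (ii)$\Leftrightarrow$(iii), I pass to the quotient $H:=G/N_\tau$ (which is still totally Markov, hence Markov): normal subgroups $N\triangleleft G$ with $N_\tau\leq N$ correspond bijectively to normal subgroups $\bar N=N/N_\tau$ of $H$, and the condition $[G,N]\leq N_\tau$ translates to $\bar N\subseteq Z(H)$. Thus (iii) is the statement that $Z(H)=\{e_H\}$. Since $H\in\mathfrak{M}$, Corollary \ref{discrAmin} says $H\in\mathfrak T$ iff $Z(H)=\{e_H\}$, which is exactly (ii). Alternatively, one can view (i)$\Leftrightarrow$(iii) directly through Corollary \ref{almdiscrsemitop}: $id_G:(G,\zeta_{N_\tau})\to(G,\zeta_N)$ is a semitopological isomorphism precisely when $N_\tau\leq N$ and $[G,N]\leq N_\tau$, and A-completeness demands this implies $N=N_\tau$.

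For part (b), the implication (i)$\Rightarrow$(ii) is Theorem \ref{perfect-taimanov}, and (ii)$\Rightarrow$(iii) follows by applying Proposition \ref{zg=1}(a) to every quotient $G/N$. For (iii)$\Rightarrow$(iv), fix $N\triangleleft G$ and observe that $[G,N]\triangleleft G$ with $[G,N]\leq N$, so $\bar N:=N/[G,N]$ is a normal subgroup of $\bar G:=G/[G,N]$ that is central by construction; applying (iii) to $\bar G$ forces $\bar N=\{e_{\bar G}\}$, i.e.\ $N=[G,N]$. Finally (iv)$\Rightarrow$(i): given any topology $\tau$ on $G$, property (iv) passes to the quotient $G/N_\tau$ (since normal subgroups of $G/N_\tau$ lift to normal subgroups $N\geq N_\tau$ of $G$), so using the equivalence (i)$\Leftrightarrow$(iii) from part (a): whenever $[G,N]\leq N_\tau\leq N$, we have $N=[G,N]\leq N_\tau$, hence $N=N_\tau$. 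Thus $\tau$ is A-complete for every $\tau$, so $G\in\mathfrak A$.

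There is no real obstacle: the whole argument is a bookkeeping exercise once the key observation (every topology is $\zeta_N$) unlocks the almost-trivial machinery of Section \ref{at-sec}. The only point requiring a little care is the translation in (a)(ii)$\Leftrightarrow$(iii), where one must verify that the two algebraic conditions $[G,N]\leq N_\tau\leq N$ correspond to centrality of $N/N_\tau$ in $G/N_\tau$; this is routine but is the conceptual heart of the proposition.
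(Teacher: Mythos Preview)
Your proof is correct and follows essentially the same approach as the paper: reduce everything to almost trivial topologies via Proposition \ref{allalmdiscr}, then invoke Theorem \ref{almdiscr-solution} and Corollary \ref{almdiscrsemitop}. The only minor routing difference is that for (a) the paper proves (i)$\Leftrightarrow$(iii) directly from Corollary \ref{almdiscrsemitop} (your ``alternative''), whereas your primary route for (ii)$\Leftrightarrow$(iii) passes to the quotient $H=G/N_\tau$ and uses Corollary \ref{discrAmin}; and for (b) the paper gets (i)$\Leftrightarrow$(ii) immediately from part (a) rather than citing Theorem \ref{perfect-taimanov} for one direction --- but the substance, in particular the arguments for (iii)$\Rightarrow$(iv) and (iv)$\Rightarrow$(i), is identical.
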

\begin{proof}
(a) The equivalence (i)$\Leftrightarrow$(ii) follows from Lemma \ref{allalmdiscr} and Theorem \ref{almdiscr-solution}. The equivalence (i)$\Leftrightarrow$(iii) follows from Lemma \ref{allalmdiscr} and Corollary \ref{almdiscrsemitop}.

\smallskip
(b) The equivalence (i)$\Leftrightarrow$(ii) follows from (a) and the equivalence (ii)$\Leftrightarrow$(iii) follows from Corollary \ref{discrAmin}.

(iii)$\Rightarrow$(iv) Let $N$ be a normal subgroup of $G$. Then $[G,N]$ is a normal subgroup of $G$ and $Z(G/[G,N])$ is trivial by hypothesis. Since $N/[G,N]\leq Z(G/[G,N])$ also $N/[G,N]$ is trivial, that is $N=[G,N]$.

(iv)$\Rightarrow$(i) By Lemma \ref{allalmdiscr} every group topology on $G$ is almost trivial. So let $L$ be a normal subgroup of $G$. For every normal subgroup $N$ of $G$ such that $[G,N]\leq L\leq N$, $N=L$ because $[G,N]=N$ by hypothesis. This proves that $\zeta_L$ is A-complete by (a). Consequently $G\in\mathfrak A$.
\end{proof}

This proposition covers Example \ref{simple-finite}.

\begin{cor}\label{finite-arnautov}
\begin{itemize}
	\item[(a)] A finite group $G\in\mathfrak A$ if and only if $G\in\mathfrak T_t$.
	\item[(b)] For every $G\in\mathfrak M$ simple, $G\in\mathfrak A$.
\end{itemize}
\end{cor}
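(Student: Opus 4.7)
The plan is to derive both parts as immediate consequences of Proposition \ref{totM,A-complete,Arnautov}(b), which provides the equivalence $G\in\mathfrak A \Leftrightarrow G\in\mathfrak T_t$ for every $G\in\mathfrak M_t$. The technical heavy lifting has already been done there (via Lemma \ref{allalmdiscr}, Corollary \ref{almdiscrsemitop}, and Corollary \ref{discrAmin}), so the task reduces to verifying, in each case, that the hypothesis $G\in\mathfrak M_t$ holds, and in (b) that the $\mathfrak T_t$-condition is satisfied.

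For (a), I would first observe that every finite group lies in $\mathfrak M_t$: a finite Hausdorff space is necessarily discrete, so any Hausdorff group topology on a finite group $G$ is the discrete one, and since every quotient of a finite group is again finite, the same holds for all quotients. Hence $G\in\mathfrak M_t$, and Proposition \ref{totM,A-complete,Arnautov}(b) (equivalence (i)$\Leftrightarrow$(ii)) yields at once $G\in\mathfrak A \Leftrightarrow G\in\mathfrak T_t$.

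For (b), I would invoke the observation made right after the definition of totally Markov groups: \emph{every simple Markov group is totally Markov}. Hence $G\in\mathfrak M_t$, so by Proposition \ref{totM,A-complete,Arnautov}(b) it suffices to verify $G\in\mathfrak T_t$. Since every simple Ta\u\i manov group is totally Ta\u\i manov (noted immediately after Definition \ref{tai-def}), we are reduced to showing $G\in\mathfrak T$. In the non-abelian case the center $Z(G)$, being a proper normal subgroup of the simple group $G$, is trivial; applying Corollary \ref{discrAmin} (which requires $G\in\mathfrak M$) then gives $G\in\mathfrak T$, and the argument is complete.

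I do not expect a genuine obstacle here: both items are bookkeeping assemblies of the characterizations already obtained. The only subtle point worth flagging is the role of non-abelianness in (b): the simple abelian Markov groups are the cyclic groups $\mathbb Z/p\mathbb Z$, which have non-trivial center and are excluded from $\mathfrak T$ by Proposition \ref{zg=1}(a), consistent with their failure to be Arnautov by Example \ref{Ex_x}(c). Thus the implicit reading of (b) is for simple non-abelian $G$, matching the scope of Example \ref{simple-finite}, which is covered by this corollary.
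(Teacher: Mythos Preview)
Your proposal is correct and matches the paper's intended argument: the corollary is stated there without proof, as an immediate consequence of Proposition~\ref{totM,A-complete,Arnautov}(b), and your two-line verifications that finite groups and simple Markov groups lie in $\mathfrak M_t$ are exactly what is needed. Your observation about the abelian case in (b) is a genuine and correct caveat---the statement tacitly excludes the cyclic groups $\mathbb Z(p)$, as you note.
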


In Example \ref{S(Z)} we have seen that $S(\Z)\not\in \mathfrak A$, but $S(\Z)/S_\omega(\Z)\in\mathfrak T_t$. In relation to Question \ref{S(Z)/S} we consider the following, which has also its own interest. In Example \ref{SO_3(R)inT} we have seen that $SO_3(\mathbb R)\in\mathfrak T_t$, but clearly $SO_3(\mathbb R)\not\in\mathfrak M$.

\begin{question}\label{S(Z)/S-markov}
Does $S(\Z)/S_\omega(\Z)\in \mathfrak M$?
\end{question}

A positive answer to this question would imply that $S(\Z)/S_\omega(\Z)\in \mathfrak A$, that is a positive answer to Question \ref{S(Z)/S}, in view of Corollary \ref{finite-arnautov}(b), since $S(\Z)/S_\omega(\Z)$ is simple. From another point of view, in order to answer Question \ref{S(Z)/S-markov}, it is possible to consider first Question \ref{S(Z)/S} which involves a weaker condition.

\begin{exmp}
Let $V=(\mathbb F_{p^m})^n$, where $m,n\in\mathbb N_+$, $p\in\mathbb P$ and $(n,p^m-1)=1$. Define $G$ to be the semidirect product of $SL(V)$ and $V$. Then $[G,V]=V$.  Moreover every normal subgroup of $G$ contains $V$ and so, since $SL(V)$ is simple, $V$ is the unique non trivial normal subgroup of $G$. Then $G\in\mathfrak A$ by Corollary \ref{finite-arnautov}(a).
\end{exmp}

\begin{exmp}\label{infinite-arnautov}
\begin{itemize}
	\item[(a)]Corollary \ref{finite-arnautov}(b) provides an example of an infinite Arnautov group. Indeed Shelah \cite{Sh} constructed a simple Markov (hence totally Markov) group $M$ under CH.
	\item[(b)]The group $M$ contains a subgroup isomorphic to $\mathbb Z$, which is abelian and so not in $\mathfrak A$.
	\item[(c)]In general a totally  Markov group need not be an Arnautov group, that is, $\mathfrak M_t\not\subseteq \mathfrak A$; for example $G:=M\times\mathbb Z(2)\in\mathfrak M_t$ but $G\not\in\mathfrak A$.
\end{itemize}
\end{exmp}

Item (b) of this example shows that ${\mathfrak A}$ is not stable under taking subgroups.

\begin{question}
Is ${\mathfrak A}$ stable under taking (finite) direct products? And under taking (finite) powers?
\end{question}

In the next example we give examples of Arnautov groups which are not simple. Moreover we see a particular case (that of Markov simple groups) in which finite powers of Arnautov groups are Arnautov.

\begin{exmp}
Let $M\in\mathfrak M$ be simple; by Corollary \ref{finite-arnautov}(b) $M\in\mathfrak A$. We show that $M^n\in \mathfrak M_t$ and  also $M^n\in\mathfrak A$, for every $n\in\mathbb N_+$.

\smallskip
Since $M\in\mathfrak M$ is simple, $M\in \mathfrak M_t$.
By Theorem \ref{c.u.e.} $M^n\in \mathfrak M_t$ for every $n\in\mathbb N_+$.
So $M^n\in\mathfrak A$ by Proposition \ref{totM,A-complete,Arnautov}(b): for every normal subgroup $N$ of $M^n$, $N=M^k$ for some $k\leq n$ up to topological isomorphisms, and consequently $[M^n,N]=[M^n,M^k]=M^k=N$.
\end{exmp}

The next are corollaries of Propositions \ref{3space-at} and \ref{allalmdiscr}.

\begin{cor}\label{allalmdiscr1<2}
Let $G$ be a group and $N_1\leq N_2$ be normal subgroups of $G$ with $N_2/N_1\in\mathfrak M_t$. Then every group topology $\tau$ on $G$ with $\zeta_{N_2}\leq \tau \leq \zeta_{N_1}$ is almost trivial.
In particular,
\begin{itemize}
	\item[(a)]if $N_2\in\mathfrak M_t$, then every group topology $\tau$ on $G$ with $\tau\geq\zeta_{N_2}$ is almost trivial; and
	\item[(b)]if $G/N_1\in\mathfrak M_t$, then every group topology $\tau$ on $G$ with $\tau\leq\zeta_{N_1}$ is almost trivial.
\end{itemize}
\end{cor}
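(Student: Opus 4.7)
The plan is to apply the three-space property of the class of almost trivial groups (Proposition \ref{3space-at}) twice, combined with Proposition \ref{allalmdiscr}, which characterizes totally Markov groups as those whose every group topology is automatically almost trivial.

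First I would handle the induced topology on $N_1$. Since $\tau\leq\zeta_{N_1}$, restriction to $N_1$ gives $\tau\restriction_{N_1}\leq\zeta_{N_1}\restriction_{N_1}$, and by Lemma \ref{almdiscrPP}(a$_1$) the right-hand side equals $\zeta_{N_1\cap N_1}=\iota_{N_1}$. Hence $(N_1,\tau\restriction_{N_1})$ is indiscrete, in particular almost trivial.

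Next I would study the quotient $(G/N_1,\tau_q)$. Passing to the quotient in the chain $\zeta_{N_2}\leq\tau\leq\zeta_{N_1}$ and applying Lemma \ref{almdiscrPP}(b), I obtain $\zeta_{N_2/N_1}\leq\tau_q\leq\delta_{G/N_1}$. The induced topology $\tau_q\restriction_{N_2/N_1}$ is therefore some group topology on the totally Markov group $N_2/N_1$, and Proposition \ref{allalmdiscr} forces it to be almost trivial. On the other hand, the iterated quotient $(G/N_1)/(N_2/N_1)\cong G/N_2$ inherits a topology pinched between the quotients of $\zeta_{N_2}$ and $\zeta_{N_1}$ on $G/N_2$; a second use of Lemma \ref{almdiscrPP}(b) shows that both of these equal $\delta_{G/N_2}$, so the iterated quotient is discrete. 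Applying Proposition \ref{3space-at} inside $G/N_1$ with the normal subgroup $N_2/N_1$ then yields that $(G/N_1,\tau_q)$ is almost trivial.

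Finally, a second application of Proposition \ref{3space-at}, this time to $G$ with the normal subgroup $N_1$ (using the first paragraph for $N_1$ and the previous one for the quotient), gives that $(G,\tau)$ is almost trivial. The two special cases follow at once by specialization: for (a) take $N_1=\{e_G\}$, so that $\zeta_{N_1}=\delta_G$ and the upper bound becomes vacuous, while for (b) take $N_2=G$, so that $\zeta_{N_2}=\iota_G$ and the lower bound becomes vacuous. I do not expect any genuine obstacle; the only place that demands care is the bookkeeping with Lemma \ref{almdiscrPP}(b) for the tower $G\to G/N_1\to G/N_2$, but this is routine.
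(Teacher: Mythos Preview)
Your proof is correct and follows essentially the same strategy as the paper: pass to $G/N_1$, use Proposition~\ref{allalmdiscr} on the totally Markov group $N_2/N_1$ together with the three-space property (Proposition~\ref{3space-at}) to see that $(G/N_1,\tau_q)$ is almost trivial, and then pull this back to $G$ using that $N_1$ is $\tau$-indiscrete. The only organizational difference is that the paper first isolates and proves case~(a), then applies it inside $G/N_1$ to obtain the general statement (and invokes Lemma~\ref{almdiscrIndQ} rather than a second application of Proposition~\ref{3space-at} for the final step), whereas you prove the general statement directly and read off (a) and (b) as specializations; the underlying argument is the same.
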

\begin{proof}
(a) Since $N_2\in\mathfrak M_t$, by Proposition \ref{allalmdiscr} $\tau\restriction_{N_2}$ is almost trivial. Moreover $\tau_q\geq(\zeta_{N_2})_q=\delta_{G/{N_2}}$ on $G/N_2$, and so $\tau_q=\delta_{G/N_2}$ and in particular it is almost trivial. By Proposition \ref{3space-at} $\tau$ is almost trivial.

Obviously, $N_1\leq N_\tau\leq N_2$. Therefore, the quotient topology ${\tau}_q$ of $(G,\tau)$ with respect to $N_1$ satisfies $\delta_{G/N_1}\geq {\tau}_q \geq \zeta_{N_2/N_1}$. To the normal subgroup $N_2/N_1\in\mathfrak M_t$ of the group $G/N_1$ and ${\tau}_q \geq \zeta_{N_2/N_1}$ we apply (a) to claim that ${\tau}_q$ is almost trivial.
Since ${\tau}_q$ was obtained from $\tau$ via a quotient with respect to the $\tau$-indiscrete normal subgroup $N_1$, by Lemma \ref{almdiscrIndQ} $\tau$ is almost trivial.

(b) Follows from the previous part.
\end{proof}

\begin{cor}
Let $G$ be a group and $N_1\leq N_2$ be normal subgroups of $G$ with $[N_2:N_1]$ finite. Then $G$ admits only finitely many group topologies $\tau$ with $\zeta_{N_2}\leq \tau \leq \zeta_{N_1}$ and they are all almost trivial. 
\end{cor}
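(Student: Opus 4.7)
The plan is to combine two inputs: (i) the preceding Corollary~\ref{allalmdiscr1<2}, applied to the normal subgroup $N_2/N_1$ of $G/N_1$, to force every such $\tau$ to be almost trivial; and (ii) the order-reversing bijection $N \mapsto \zeta_N$ between $\mathcal N(G)$ and $\mathcal{AT}(G)$ together with the hypothesis $[N_2:N_1]<\infty$, to count the candidates.

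First I would observe that since $N_2/N_1$ is finite, it is totally Markov (as explicitly noted after the definition of $\mathfrak M_t$, finite groups are totally Markov). Therefore Corollary~\ref{allalmdiscr1<2}, applied to the normal subgroups $N_1 \leq N_2$ with $N_2/N_1 \in \mathfrak M_t$, immediately yields that every group topology $\tau$ on $G$ satisfying $\zeta_{N_2}\leq \tau \leq \zeta_{N_1}$ is almost trivial. This disposes of the second assertion.

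For the first assertion, I would invoke the order-reversing bijection $N \mapsto \zeta_N$ between the lattice $\mathcal N(G)$ of normal subgroups of $G$ and the lattice $\mathcal{AT}(G)$ of almost trivial topologies on $G$. By the previous paragraph every $\tau$ in the specified interval is of the form $\tau = \zeta_N$ for some $N\triangleleft G$, and the conditions $\zeta_{N_2}\leq \zeta_N \leq \zeta_{N_1}$ translate (via order reversal) into $N_1 \leq N \leq N_2$. Since $[N_2:N_1]$ is finite, the interval $[N_1,N_2]$ in the subgroup lattice of $N_2$ is finite (the subgroups between $N_1$ and $N_2$ are in bijection with the subgroups of the finite group $N_2/N_1$), hence so is its subset of normal subgroups of $G$. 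This gives only finitely many possibilities for $N$, and hence only finitely many topologies $\tau$.

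There is no real obstacle: the only thing to keep straight is the direction of the order-reversing bijection (smaller normal subgroup $\leftrightarrow$ finer almost trivial topology), and the fact that finiteness of $N_2/N_1$ gives both that $N_2/N_1 \in \mathfrak M_t$ (needed for step~(i)) and that the interval $[N_1,N_2]$ is finite (needed for step~(ii)). Everything else is a direct appeal to the already-established Corollary~\ref{allalmdiscr1<2} and to the bijection discussed at the beginning of the subsection on permanence properties of almost trivial topologies.
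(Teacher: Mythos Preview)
Your proposal is correct and follows essentially the same approach as the paper's proof: both apply Corollary~\ref{allalmdiscr1<2} (using that finite groups are in $\mathfrak M_t$) to obtain that all such $\tau$ are almost trivial, and then use the finiteness of $[N_2:N_1]$ to conclude there are only finitely many. Your write-up simply makes explicit the counting step via the bijection $N\mapsto\zeta_N$, which the paper compresses into a single clause.
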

\begin{proof}
Apply Corollary \ref{allalmdiscr1<2} to conclude that every group topology $\tau$ on $G$ such that $\zeta_{N_2}\leq \tau \leq \zeta_{N_1}$ is almost trivial. Moreover these $\tau$ are finitely many because $[N_2:N_1]$ is finite.
\end{proof}

\begin{rem}
A group $G$ is \emph{hereditarily non-topologizable} in case every subgroup of $G$ is totally Markov \cite{Lu}. Thus
\begin{center}
hereditarily non-topologizable $\Rightarrow$ totally Markov $\Rightarrow$ Markov.
\end{center}
Consequently every group topology on a hereditarily non-topologizable group is almost trivial.

\smallskip
If a hereditarily non-topologizable group $G$ is Arnautov, then every quotient of $G$ is Arnautov.

\smallskip
While infinite Arnautov groups exist (see Example \ref{infinite-arnautov}(a)), it is not known if there exists any infinite non-topologizable group. The existence of such a group would solve an open problem from \cite{DU}.
\end{rem}

\section{Problem B}\label{pbB}

We start by underlying an important aspect of A$_\mathcal K$-completeness compared to $\mathcal K$-minimality, where $\mathcal K$ is a class of topological groups.
Indeed, let us recall first that A$_\mathcal G$-completeness coincides with A-completeness and implies A$_\mathcal K$-completeness (see Remark \ref{A_K-compl-rem}).
The $\mathcal K$-minimal groups are \emph{precisely} the indiscrete groups, whenever $\mathcal K$ contains all indiscrete groups. This fails to be true for A$_\mathcal K$-completeness. In fact, the group $G=S(\Z)$, equipped with either the discrete or the pointwise convergence topology, is $A$-complete (so A$_\mathcal K$-complete, for every $\mathcal K\subseteq \mathcal G$) as shown by Example \ref{S(Z)}(a,b). More generally for every non-trivial $G\in\mathfrak T$, the (obviously) non-indiscrete group $(G,\delta_G)$ is A-complete (so A$_\mathcal K$-complete, for every $\mathcal K\subseteq \mathcal G$) by Theorem \ref{discrAminz1}.

\medskip
As we have seen in Section \ref{pbA}  A-complete (i.e., A$_\mathcal G$-complete) groups are not easy to come by. In order to have a richer choice of groups, we consider A$_\mathcal K$-complete groups for appropriate subclasses $\mathcal K$ of $\mathcal G$.
In case the subclass $\mathcal K$ is completely determined by an algebraic property (i.e., for every group topology $\tau$ on $G$, $(G,\tau)\in\mathcal K$ if and only if $(G,\delta_G)\in\mathcal K$), then obviously a topological group $(G,\tau)\in \mathcal K$ is A$_\mathcal K$-complete if and only if it is A-complete.
A typical example to this effect is the class of all topological abelian groups, or more generally the class of all topological groups such that the underlying group belongs to a fixed variety $\mathcal V$ (in the sense of \cite{Neu}) of abstract groups.
We formulate an open question for a specific $\mathcal V$ in Question \ref{nilp-q}.

\medskip
In the sequel we consider subclasses $\mathcal K\subseteq\mathcal G$ of a different form, most often $\mathcal K\subseteq \mathcal H$.

\smallskip
Since $\mathcal H$-minimality coincides with minimality, A$_\mathcal H$-completeness is a generalization of minimality. It is a strict generalization in view of (a) of the next example.

\begin{exmp}\label{S(Z)-min}
\begin{itemize}
	\item[(a)]The group $(S(\Z),\delta_{S(\Z)})$ is A-complete, as shown by Example \ref{S(Z)}(b), and consequently A$_\mathcal H$-complete, but it is not minimal: $\delta_{S(\Z)}$ and the point-wise convergence topology $\mathcal T$ are both Hausdorff.
	\item[(b)] Let $G\in\mathfrak T$ be non-torsion. Then $(G,\delta_G)$ is A-complete by Theorem \ref{discrAminz1}, and in particular it is A$_\mathcal H$-complete. On the other hand, by our hypothesis there exists $x\in G$ of infinite order, that is $\langle x\rangle$ is abelian and so not A$_\mathcal H$-complete. This shows that in general a subgroup of an A$_\mathcal H$-complete group need not be A$_\mathcal H$-complete. (This is noted after Example \ref{S(Z)} for the particular case of $(S(\Z),\delta_{S(\Z)})$.)
\end{itemize}
\end{exmp}

Anyway A$_\mathcal{H}$-completeness coincides with minimality in the abelian case:

\begin{prop}	
If $G$ is an abelian group and $(G,\tau)\in\mathcal{H}$, then $(G,\tau)$ is A$_{\mathcal{H}}$-complete if and only if it is minimal.
\end{prop}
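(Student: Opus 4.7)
The plan is to deduce this directly from the fact that for an abelian group every continuous isomorphism is semitopological, a fact already used in the proof of Corollary \ref{hausdorff}.

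First I would dispose of the easy direction: minimality implies A$_\mathcal{H}$-completeness. This is a special case of the general observation recorded in Remark \ref{A_K-compl-rem}(a), that $\mathcal{K}$-minimality implies A$_\mathcal{K}$-completeness for \emph{any} class $\mathcal{K}$. Indeed, if $(G,\tau)$ is minimal and $\sigma\leq\tau$ is a Hausdorff group topology on $G$ such that $id_G:(G,\tau)\to(G,\sigma)$ is semitopological, then in particular $id_G$ is a continuous isomorphism onto a Hausdorff group, so minimality forces $\sigma=\tau$. Note that for this direction abelianness is not used.

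For the converse, suppose $(G,\tau)$ is A$_\mathcal{H}$-complete and let $\sigma\leq\tau$ be any Hausdorff group topology on $G$. The map $id_G:(G,\tau)\to(G,\sigma)$ is a continuous isomorphism; since $G$ is abelian, this continuous isomorphism is automatically semitopological (this is the same remark used in the proof of Corollary \ref{hausdorff}: one can take $\widetilde G = G$ itself with $\widetilde\tau=\tau$, because the condition on commutators in Theorem \ref{semitop}(b) is trivially satisfied in an abelian group, and the condition in Theorem \ref{semitop}(a) on $U$-thinness holds for every subset). Now A$_\mathcal{H}$-completeness applied to $\sigma$ gives $\sigma=\tau$, proving minimality.

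There is essentially no obstacle here; the content of the proposition is that the notion of semitopological isomorphism collapses to that of continuous isomorphism in the abelian category, so the restriction imposed by A$_\mathcal{H}$-completeness in its definition becomes vacuous and the notion reduces to that of minimality. The statement should be viewed as the pointwise analogue of Remark \ref{A_K-compl-rem}(a), valid whenever the particular group $G$ (rather than the whole class $\mathcal{K}$) is abelian.
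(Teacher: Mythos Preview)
Your argument is correct and is exactly the reasoning the paper has in mind; the proposition is stated there without proof because it is immediate from the observation (already invoked in the proof of Corollary~\ref{hausdorff}) that every continuous isomorphism of abelian topological groups is semitopological, combined with Remark~\ref{A_K-compl-rem}(a).

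One small correction: the parenthetical ``one can take $\widetilde G = G$ itself with $\widetilde\tau=\tau$'' does not work as a justification. With that choice the extending homomorphism is $id_G:(G,\tau)\to(G,\sigma)$ itself, and requiring it to be open is precisely asking for $\sigma=\tau$, which is what you are trying to prove. The valid justification is the one you give immediately afterwards via Theorem~\ref{semitop}: in an abelian group every commutator is trivial, so condition~(b) holds, and conjugation is trivial, so every subset is $U$-thin and condition~(a) holds. Simply delete the $\widetilde G=G$ clause and the proof is clean.
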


This proposition gives a partial answer to Problem B for the subclass of $\mathcal H$ of abelian topological groups. The problem remains open for the larger class $\mathcal H$:

\begin{question}\label{A_H-q}
When is a topological group $(G,\tau)\in\mathcal H$ A$_\mathcal H$-complete? And in which cases is $(G,\delta_G)$ A$_\mathcal H$-complete?
\end{question}

The next example, that extends Example \ref{S(Z)}(a), motivates Lemma \ref{zg=1->A-min=A_H-min}.

\begin{exmp}
For an infinite topologically simple (i.e., there exists no non-trivial closed normal subgroup) Hausdorff non-abelian group $(G,\tau)$, minimal implies A-complete. In fact $Z(G)=\{e_G\}$ and Proposition \ref{min,z01,a-min} applies.
\end{exmp}

The next lemma and corollary provide partial answers to Question \ref{A_H-q}. Lemma \ref{zg=1->A-min=A_H-min} in particular covers the previous example, since it implies that every minimal group with trivial center is A-complete (in view of the fact that minimal implies A$_\mathcal H$-minimal).

\begin{lem}\label{zg=1->A-min=A_H-min}
Let $G$ be a group with $Z(G)=\{e_G\}$ and let $\tau$ be a Hausdorff group topology on $G$. Then $(G,\tau)$ is A$_\mathcal H$-complete if and only if $(G,\tau)$ is A-complete.
\end{lem}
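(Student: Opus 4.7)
The plan is to prove the lemma in two directions, the easy one being essentially immediate and the interesting one being a one-line application of Proposition \ref{t2,z=1,t2}.

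For the ``only if'' direction, since $\mathcal{H}\subseteq \mathcal{G}$ and $(G,\tau)\in\mathcal{H}$, the implication A-complete $\Rightarrow$ A$_\mathcal{H}$-complete is given directly by Remark \ref{A_K-compl-rem}(c), so there is nothing to prove on this side.

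For the ``if'' direction, I would assume $(G,\tau)$ is A$_\mathcal{H}$-complete and verify A-completeness from the definition. So let $\sigma$ be an arbitrary group topology on $G$ with $\sigma\leq\tau$ and with $id_G:(G,\tau)\to(G,\sigma)$ semitopological; we must conclude $\sigma=\tau$. The hypotheses $Z(G)=\{e_G\}$ and $\tau$ Hausdorff are exactly what Proposition \ref{t2,z=1,t2} needs, so that proposition yields that $\sigma$ is Hausdorff as well, i.e.\ $(G,\sigma)\in\mathcal{H}$. Now the A$_\mathcal{H}$-completeness of $(G,\tau)$ applies to the comparison $\sigma\leq\tau$ between Hausdorff topologies on $G$ with $id_G:(G,\tau)\to(G,\sigma)$ semitopological, and forces $\tau=\sigma$, as desired.

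There is essentially no obstacle: the whole content of the implication is packaged in Proposition \ref{t2,z=1,t2}, which upgrades the possibly non-Hausdorff $\sigma$ to a Hausdorff one precisely under the two standing hypotheses $Z(G)=\{e_G\}$ and $\tau\in\mathcal{H}$. Once this upgrade is available, A$_\mathcal{H}$-completeness is directly applicable, and the gap between the two notions of completeness collapses.
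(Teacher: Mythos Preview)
Your proof is correct and follows exactly the paper's approach: one direction is immediate from $\mathcal H\subseteq\mathcal G$, and the other is a direct application of Proposition~\ref{t2,z=1,t2} to force $\sigma$ Hausdorff, after which A$_\mathcal H$-completeness applies. The only slip is that you have the labels ``if'' and ``only if'' interchanged (in the statement ``A$_\mathcal H$-complete if and only if A-complete'', the ``only if'' part is A$_\mathcal H$-complete $\Rightarrow$ A-complete), but this is purely cosmetic and does not affect the argument.
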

\begin{proof}
If $(G,\tau)$ is A-complete, then it is A$_\mathcal H$-complete.

Suppose that $(G,\tau)$ is A$_\mathcal H$-complete. Let $\sigma\leq\tau$ be a group topology on $G$ such that $id_G:(G,\tau)\to (G,\sigma)$ is semitopological. By Proposition \ref{t2,z=1,t2} $\sigma$ is Hausdorff. Then $\sigma=\tau$. This proves that $(G,\tau)$ is A-complete.
\end{proof}

This lemma implies Proposition \ref{min,z01,a-min}, since minimal groups are A$_\mathcal H$-complete.

\begin{cor}\label{A_H-min}
Let $G$ be a group. Then $Z(G)=\{e_G\}$ and $\delta_G$ is A$_\mathcal H$-complete if and only if $G\in\mathfrak T$.
\end{cor}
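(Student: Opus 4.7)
The plan is to derive the corollary as a direct synthesis of Theorem \ref{discrAminz1} (which handles the A-complete version), Lemma \ref{zg=1->A-min=A_H-min} (which bridges A-completeness and A$_\mathcal{H}$-completeness under the centerless hypothesis), and Proposition \ref{zg=1}(a) (which says Ta\u\i manov groups are centerless). No new machinery is needed.

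For the sufficiency, suppose $G \in \mathfrak T$. Then $Z(G) = \{e_G\}$ by Proposition \ref{zg=1}(a). By Theorem \ref{discrAminz1}, $\delta_G$ is A-complete, hence also A$_\mathcal{H}$-complete by Remark \ref{A_K-compl-rem}(c), since $\mathcal{H} \subseteq \mathcal{G}$.

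For the necessity, assume $Z(G) = \{e_G\}$ and $\delta_G$ is A$_\mathcal{H}$-complete. Since $\delta_G$ is Hausdorff and $Z(G) = \{e_G\}$, Lemma \ref{zg=1->A-min=A_H-min} applies to upgrade A$_\mathcal{H}$-completeness of $\delta_G$ to full A-completeness. Then Theorem \ref{discrAminz1} yields $G \in \mathfrak T$.

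There is no real obstacle here since every ingredient has been established earlier in the paper; the only thing to be careful about is citing Lemma \ref{zg=1->A-min=A_H-min} with $\tau = \delta_G$ (which is Hausdorff as required) to avoid re-deriving its contents via Proposition \ref{t2,z=1,t2}.
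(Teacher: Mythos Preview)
Your proof is correct and follows essentially the same approach as the paper's own proof: both directions use Theorem \ref{discrAminz1}, Lemma \ref{zg=1->A-min=A_H-min}, and Proposition \ref{zg=1}(a) in the same way. The only cosmetic difference is that you explicitly cite Remark \ref{A_K-compl-rem}(c) for the implication A-complete $\Rightarrow$ A$_\mathcal H$-complete, which the paper leaves implicit.
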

\begin{proof}
If $Z(G)=\{e_G\}$ and $\delta_G$ is A$_\mathcal H$-complete, then $\delta_G$ is A-complete by Lemma \ref{zg=1->A-min=A_H-min} and so $G\in\mathfrak T$ by Theorem \ref{discrAminz1}.

Assume that $G\in\mathfrak T$. By Theorem \ref{discrAminz1} $\delta_G$ is A-complete and so A$_\mathcal H$-complete. Moreover $Z(G)=\{e_G\}$ by Proposition \ref{zg=1}(a).
\end{proof}

Lemma \ref{zg=1->A-min=A_H-min} suggests the following question: is $Z(G)=\{e_G\}$ a \emph{necessary} condition for the validity of the implication $(G,\tau)$ A$_\mathcal H$-complete $\Rightarrow$ $(G,\tau)$ A-complete?
According to Corollary \ref{A_H-min} the answer is ``yes'' in case $\tau$ is the discrete topology.

\begin{prop}\label{SIN-A}
Let $(G,\tau)$ be a SIN Hausdorff group. If $(G,\tau)$ is A-complete, then $Z(G)=\{e_G\}$.
\end{prop}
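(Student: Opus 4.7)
The plan is to argue by contradiction: assuming $Z(G)\neq\{e_G\}$, I will produce a strictly coarser group topology $\sigma<\tau$ on $G$ such that $id_G:(G,\tau)\to(G,\sigma)$ is semitopological, which contradicts A-completeness. The candidate $\sigma$ will be obtained by ``enlarging'' each neighborhood of $e_G$ by a non-trivial central subgroup.

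Suppose $Z(G)\neq\{e_G\}$ and pick $z\in Z(G)\setminus\{e_G\}$. Set $N=\langle z\rangle$; since $N\leq Z(G)$, the subgroup $N$ is normal in $G$. Let $\sigma$ be the initial topology induced by the canonical projection $\pi:G\to (G/N,\tau_q)$. Equivalently, $\sigma$ has the filter base $\{UN:U\in\mathcal V_{(G,\tau)}(e_G)\}$ at $e_G$. Clearly $\sigma\leq\tau$, and because $N\leq N_\sigma$ while $N_\tau=\{e_G\}$ (as $\tau$ is Hausdorff), we have $\sigma<\tau$.

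It remains to check that $id_G:(G,\tau)\to(G,\sigma)$ is semitopological. Since $(G,\tau)$ is SIN, by Proposition \ref{SINsemitop} it suffices to show that for every $U\in\mathcal V_{(G,\tau)}(e_G)$ and every $g\in G$ there exists $V_g\in\mathcal V_{(G,\sigma)}(e_G)$ with $[g,V_g]\subseteq U$. Fix $U$. Using the SIN property, choose a symmetric $\tau$-neighborhood $W$ of $e_G$ which is invariant under conjugation (i.e.\ $gWg^{-1}=W$ for all $g\in G$) and satisfies $W^2\subseteq U$. Set $V_g:=WN$; this is a $\sigma$-neighborhood of $e_G$ by construction. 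For any $w\in W$ and $n\in N$, the centrality of $n$ yields
\begin{equation*}
[g,wn]=gwng^{-1}n^{-1}w^{-1}=gwg^{-1}nn^{-1}w^{-1}=gwg^{-1}w^{-1}=[g,w],
\end{equation*}
so $[g,WN]=[g,W]$. Moreover, invariance and symmetry of $W$ give $[g,w]=(gwg^{-1})w^{-1}\in W\cdot W=W^2\subseteq U$, hence $[g,V_g]\subseteq U$.

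Thus $id_G:(G,\tau)\to(G,\sigma)$ is semitopological with $\sigma<\tau$, contradicting the A-completeness of $(G,\tau)$. Therefore $Z(G)=\{e_G\}$, as required.

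The only delicate point is the choice of a good neighborhood $W$: I need invariance (to kill the conjugate in the commutator using $N\subseteq Z(G)$) combined with the ``$W^2\subseteq U$'' bound, both supplied at once by the SIN hypothesis. Once this is in place, the central element slides through commutators and the rest is immediate.
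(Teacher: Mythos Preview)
Your proof is correct and follows essentially the same route as the paper's: assume $Z(G)\neq\{e_G\}$, coarsen $\tau$ by multiplying neighborhoods by a non-trivial central subgroup, and verify the semitopological condition via Proposition~\ref{SINsemitop} using an invariant neighborhood $W$ with $W^2\subseteq U$. The only cosmetic difference is that the paper uses the whole center $Z(G)$ (writing the new topology as $\tau\wedge\zeta_{Z(G)}$) while you use a cyclic central subgroup $N=\langle z\rangle$; the commutator computation and the use of SIN are identical.
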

\begin{proof}
Suppose that $Z(G)\neq\{e_G\}$.
We want to see that $(G,\tau)$ fails to be A-complete. Consider the topology $T:=\tau\wedge\zeta_{Z(G)}$, which has as a local base at $e_G$ the family $\mathcal B_T=\{U\cdot Z(G):U\in\mathcal V_{(G,\tau)}(e_G)\}$. Since $\tau$ is Hausdorff and $T$ is not Hausdorff (because $Z(G)\neq\{e_G\}$), $\tau>T$. So it remains to prove that $id_G:(G,\tau)\to (G,T)$ is semitopological. Since $(G,\tau)$ is SIN, it suffices to prove that for every $U\in\mathcal V_{(G,\tau)}(e_G)$ and for a fixed $g\in G$ there exists $V_g\in\mathcal B_T$ such that $[g,V_g]\subseteq U$ and then apply Proposition \ref{SINsemitop}. So let $U\in\mathcal V_{(G,\tau)}(e_G)$ and $g\in G$. Since $(G,\tau)$ is SIN, there exists $U'\in\mathcal V_{(G,\tau)}(e_G)$ such that $U'U'\subseteq U$ and $g U' g^{-1}\subseteq U'$. Let $V_g=U'\cdot Z(G)\in\mathcal B_T$. Then $[g,V_g]=[g,U']\subseteq U'U'\subseteq U$.
Since we have proved that $id_G:(G,\tau)\to (G,T)$ is semitopological and $\tau>T$, then $(G,\tau)$ fails to be A-complete.
\end{proof}

\begin{rem}\label{rem}
As a consequence of Lemma \ref{zg=1->A-min=A_H-min} and Proposition \ref{SIN-A} we have the following equivalence between A-completeness and the purely algebraic property of having trivial center.
Indeed, if $(G,\tau)\in\mathcal H$ is A$_\mathcal H$-complete, then $Z(G)=\{e_H\}$ implies $(G,\tau)$ A-complete by Lemma \ref{zg=1->A-min=A_H-min}. Moreover, if $(G,\tau)$ is SIN, in view of Proposition \ref{SIN-A} also the converse implication holds, that is, $(G,\tau)$ is A-complete if and only if $Z(G)=\{e_G\}$.
\end{rem}

\begin{cor}\label{SIN-A_H}
Let $(G,\tau)$ be a Hausdorff group with $Z(G)\neq\{e_G\}$.
\begin{itemize}
	\item[(a)]If $(G,\tau)$ is SIN and A$_\mathcal H$-complete, then it is not A-complete.
	\item[(b)]If $(G,\tau)$ is SIN and minimal, then it is not A-complete.
	\item[(c)]If $(G,\tau)$ is compact, then it is not A-complete.
\end{itemize}
\end{cor}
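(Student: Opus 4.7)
The plan is to deduce all three items from Proposition \ref{SIN-A}, whose contrapositive reads: if $(G,\tau)$ is SIN and Hausdorff with $Z(G)\neq\{e_G\}$, then $(G,\tau)$ is not A-complete.

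For part (a), I would apply this contrapositive directly. Strictly speaking, the A$_\mathcal H$-completeness hypothesis is not needed for the conclusion; its presence in the statement serves to place the corollary into the context of Remark \ref{rem}, which compares A$_\mathcal H$-completeness with A-completeness and highlights that in the SIN Hausdorff setting these two properties can coincide only when the center is trivial. So the proof of (a) is just one line: apply Proposition \ref{SIN-A}.

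For part (b), I would first observe that every minimal Hausdorff group is automatically A$_\mathcal H$-complete, since any strictly coarser Hausdorff group topology on $G$ would witness the failure of both minimality and A$_\mathcal H$-completeness. Hence the hypotheses of (a) are satisfied, and (b) follows. (Equivalently, Proposition \ref{SIN-A} applies verbatim to the SIN Hausdorff group $(G,\tau)$ with nontrivial center.)

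Finally, for part (c), I would invoke two standard facts about compact Hausdorff groups: they are SIN, since they admit a local base at $e_G$ of conjugation-invariant neighborhoods (via averaging with respect to Haar measure), and they are minimal, since any continuous bijection from a compact space to a Hausdorff space is a homeomorphism, so any coarser Hausdorff group topology coincides with $\tau$. Thus (b) applies and $(G,\tau)$ is not A-complete. The main technical content is already packaged into Proposition \ref{SIN-A}; there is no genuine obstacle beyond verifying these classical structural facts. The only delicate point worth flagging is the slight redundancy of the A$_\mathcal H$-completeness hypothesis in (a), which one should acknowledge but need not eliminate.
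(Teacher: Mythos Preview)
Your argument is correct and matches the paper's intent: the corollary is stated without proof, as an immediate consequence of Proposition~\ref{SIN-A}, and your deduction of (a) from the contrapositive of that proposition, (b) from (a) via ``minimal $\Rightarrow$ A$_\mathcal H$-complete'', and (c) from (b) via ``compact $\Rightarrow$ SIN and minimal'' is exactly the chain the paper has in mind. Your observation that the A$_\mathcal H$-completeness hypothesis in (a) is formally redundant is also accurate and worth noting.
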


This corollary produces in particular examples of A$_\mathcal H$-complete groups which are not A-complete (e.g, compact groups with non-trivial center), showing that the implication $(G,\tau)$ A$_\mathcal H$-complete $\Rightarrow$ $(G,\tau)$ A-complete may fail to be true, also for non-discrete groups. In particular in Example \ref{exmp} shows a group, with non-trivial center, which does not admit any compact topology, but admits minimal linear (so SIN) topologies, that are not A-complete by Corollary \ref{SIN-A_H}.

\begin{prop}\label{A_H,non-A}
Let $G$ be a group such that $G\in\mathfrak T$ and let $F$ be a finite group. Then $\delta_{G\times F}$ is A$_\mathcal H$-complete.
\end{prop}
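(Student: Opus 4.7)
My plan is to reduce the semitopologicality condition via Remark \ref{discrhom}, then compute $T_{G\times F}$ explicitly and exploit the finiteness of $F$.

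First, I would fix a Hausdorff group topology $\sigma$ on $G\times F$ with $\sigma\leq \delta_{G\times F}$ such that $id_{G\times F}:(G\times F,\delta_{G\times F})\to(G\times F,\sigma)$ is semitopological. By Remark \ref{discrhom} this is equivalent to $\sigma\geq T_{G\times F}$, which is the crucial structural condition I want to extract.

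Next, I would compute $T_{G\times F}$. Lemma \ref{tai-products}, applied to the finite product $G\times F$, gives $T_{G\times F}=T_G\times T_F$. The assumption $G\in\mathfrak T$ gives $T_G=\delta_G$. Since $F$ is finite, hence finitely generated, Lemma \ref{taimanov}(d) ensures $T_F$ is almost trivial, and combining with Lemma \ref{taimanov}(a) I get $T_F=\zeta_{Z(F)}$ (concretely, $Z(F)=\bigcap_{x\in F}c_F(x)$ is a finite intersection of basic open sets, hence open in $T_F$). Therefore
\[
T_{G\times F}=\delta_G\times \zeta_{Z(F)}=\zeta_{\{e_G\}\times Z(F)},
\]
an almost trivial topology whose minimal open neighborhood of the identity is the finite subgroup $H:=\{e_G\}\times Z(F)$.

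From $\sigma\geq T_{G\times F}$ I conclude that $H$ is $\sigma$-open in $G\times F$. Since $H$ is finite and the $\sigma$-induced topology on $H$ is Hausdorff (because $\sigma$ is), that induced topology must be discrete, so $\{(e_G,e_F)\}$ is open in $H$, and hence open in $(G\times F,\sigma)$ because $H$ itself is $\sigma$-open. This forces $\sigma=\delta_{G\times F}$, establishing A$_{\mathcal H}$-completeness.

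I do not expect a real obstacle: the whole argument hinges on the clean identification $T_{G\times F}=\zeta_{\{e_G\}\times Z(F)}$, which isolates the problem into the finite factor $F$, after which the finiteness of $Z(F)$ plus Hausdorffness of $\sigma$ handles the rest automatically.
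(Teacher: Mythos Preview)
Your proof is correct and follows essentially the same route as the paper: reduce via Remark \ref{discrhom} to $\sigma\geq T_{G\times F}$, split $T_{G\times F}=T_G\times T_F=\delta_G\times T_F$ via Lemma \ref{tai-products}, and then use finiteness of $F$ together with Hausdorffness of $\sigma$ to force $\sigma=\delta_{G\times F}$. The only difference is cosmetic: the paper stops at $\tau\geq\delta_G\times T_F$ and concludes directly, whereas you make explicit the identification $T_F=\zeta_{Z(F)}$ and spell out the ``finite open subgroup in a Hausdorff group is discrete'' step.
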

\begin{proof}
Let $\tau$ be a Hausdorff group topology on $G\times F$ and suppose that $id_{G\times F}:(G\times F,\delta_{G\times F})\to (G\times F,\tau)$ is semitopological. By Remark \ref{discrhom} $\tau\geq T_{G\times F}$. But $T_{G\times F}=T_G\times T_F=\delta_G\times T_F$ by Lemma \ref{tai-products}. So $\tau\geq \delta_G\times T_F$. Since $\tau$ is Hausdorff, $\tau=\delta_{G\times F}$, and this proves that $\delta_{G\times F}$ is A$_\mathcal H$-complete.
\end{proof}

Using this proposition we can give examples of A$_\mathcal H$-complete groups which are not A-complete, as the following. Another example of an A$_\mathcal H$-complete group which is not A-complete is in Example \ref{exmp}.

\begin{exmp}
Let $G=S(\Z)\times \Z(2)$. By Theorem \ref{S(X)_Taimanov}(a) ${S(\Z)}\in\mathfrak T$. Then $(G,\delta_G)$ is A$_\mathcal H$-complete by Proposition \ref{A_H,non-A}. Since $Z(G)=\{id_\Z\}\times \Z(2)$ is not trivial, $G\not\in\mathfrak T$ by Proposition \ref{zg=1}(a). Consequently $G$ is not A-complete by Theorem \ref{discrAminz1}.
\end{exmp}

\begin{exmp}\label{exmp}
Let $p\in\mathbb P$ and let $G$ be the group $\mathbb H_\Z$ (see Example \ref{H_Z}) equipped with the product topology $T=P(\tau_p,\tau_p,\tau_p)$ where $\tau_p$ is the $p$-adic topology of $\Z$. A base of $T$ is given by the family of the (normal) subgroups formed by the matrices
of the form $\begin{pmatrix}1 & p^n \Z & p^n \Z\cr
  0 &  1      &  p^n\Z \cr
0    &   0  & 1
\end{pmatrix}.$ Clearly $G$ is SIN.
Then $(G,T)$ is minimal \cite{D,DM}, so A$_\mathcal H$-complete.  Moreover $(G,T)$ is A-complete by Corollary \ref{SIN-A_H}.
\end{exmp}

Considering SIN groups in Example \ref{sin-example}, Proposition \ref{SIN-A} and Corollary \ref{SIN-A_H} we have weakened the commutativity from a topological point of view. A different way to weaken commutativity, but algebraically, is to consider nilpotent topological groups:

\begin{question}\label{nilp-q}
If $(G,\tau)$ is a nilpotent topological group, when is $(G,\tau)$ A-complete?
\end{question}

The following example is dedicated to a very particular case of this question. 

\begin{exmp}\label{K_R^0}
Consider the class $$\mathcal K_\mathbb R^0:=\{(\mathbb H_\mathbb R,P(\tau,\tau,\tau)): \tau\ \text{is a ring topology on}\ \mathbb R\},$$ where $P(\tau,\tau,\tau)$ denotes the product topology on $G$.
Then every $G\in\mathcal K_\mathbb R^0$ is A$_{\mathcal K_\mathbb R^0}$-complete.

\medskip
Indeed, let $\tau\geq\sigma$ be ring topologies on $\R$ such that $$(\mathbb H_\mathbb R,P(\tau,\tau,\tau)),(\mathbb H_\mathbb R,P(\sigma,\sigma,\sigma))\in\mathcal K_\mathbb R^0.$$ Suppose that $id_\mathbb R:(\mathbb H_\mathbb R,P(\tau,\tau,\tau))\to(\mathbb H_\mathbb R,P(\sigma,\sigma,\sigma))$ is semitopological. By Theorem \ref{semitop}, for every $U'=\begin{pmatrix}
1 & U & U \\
0 & 1 & U \\
0 & 0 & 1
\end{pmatrix}\in\mathcal V_{(\mathbb H_\mathbb R,P(\tau,\tau,\tau))}(e_{\mathbb H_\mathbb R})$ and $h=\begin{pmatrix}
1 & 1 & 0 \\
0 & 1 & 0 \\
0 & 0 & 1
\end{pmatrix}$ there exists $V_h=\begin{pmatrix}
1 & V & V \\
0 & 1 & V \\
0 & 0 & 1
\end{pmatrix}\in\mathcal V_{(\mathbb H_\mathbb R,P(\sigma,\sigma,\sigma))}(e_{\mathbb H_\mathbb R})$ such that $[h,V_h]\subseteq U'$. In particular this implies $V\subseteq U$ and hence $\sigma\geq \tau$, that is $\sigma=\tau$.
\end{exmp}

In a forthcoming paper \cite{DGB_x} we extend this result to the more general case of generalized Heisenberg groups on an arbitrary unitary ring $A$.

\section{Problem C}\label{pbC}

Problem C is about compositions of semitopological isomorphisms. In order to measure more precisely the level of being semitopological, we introduce the next notion.

\begin{defn}
Let $G$ be a group, $\sigma\leq\tau$ group topologies on $G$ and $n\in\mathbb N_+$. Then $id_G:(G,\tau)\to(G,\sigma)$ is \emph{$n$-step semitopological} if there exist $n-1$ group topologies $\sigma\leq\lambda_{n-1}\leq\ldots\leq\lambda_1\leq\tau$ on $G$ such that $id_G:(G,\tau)\to (G,\lambda_1),id_G:(G,\lambda_1)\to(G,\lambda_2),\ldots,id_G:(G,\lambda_{n-1})\to(G,\sigma)$ are semitopological.
\end{defn}

Obviously $id_G:(G,\tau)\to(G,\sigma)$ is $1$-step semitopological if and only if it is semitopological.
Moreover a continuous isomorphism of topological groups is composition of semitopological isomorphisms if and only if it is $n$-step semitopological for some $n\in\mathbb N_+$.

\medskip
Let $G$ be a non-trivial group. The \emph{lower central series} of $G$ is defined by $\gamma_1(G)=G$ and $\gamma_n(G)=[G,\gamma_{n-1}(G)]$ for every $n\in\mathbb N$, $n\geq 2$.
The \emph{upper central series} of $G$ is defined by $Z_0(G)=\{e_G\}$, $Z_1(G)=Z(G)$ and $Z_n(G)$ is such that $Z_n(G)/Z_{n-1}(G)=Z(G/Z_{n-1}(G))$ for every $n\in\mathbb N$, $n\geq 2$.
A group $G$ is nilpotent if and only if $\gamma_n(G)=\{ e_G \}$ for some $n\in\mathbb N_+$, if and only if $Z_m(G)=G$ for some $m\in\mathbb N_+$.
The minimum $n\in\mathbb N_+$ such that $\gamma_{n+1}(G)=\{ e_G \}$, that is, the minimum $n\in\mathbb N_+$ such that $Z_n(G)=G$, is the \emph{class of nilpotency} of $G$.

Our main theorem about $n$-step semitopological isomorphisms is the following. It is an answer to Problem C(a) in the particular case when the topologies on the domain and on the codomain are the discrete and the indiscrete one respectively.

\begin{theo}\label{n-step_semitop}
Let $G$ be a group and $n\in\mathbb N_+$. Then $id_G:(G,\delta_G)\to(G,\iota_G)$ is $n$-step semitopological if and only if $G$ is nilpotent of class $\leq n$.
\end{theo}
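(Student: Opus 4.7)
The plan is to translate the topological condition into an algebraic statement about normal subgroups via the functor $\tau \mapsto N_\tau$, using Lemma \ref{G,N<N} in one direction and Corollary \ref{almdiscrsemitop} in the other. The appropriate algebraic scaffolding is the upper central series of $G$.

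For the \emph{necessity} direction, suppose $id_G:(G,\delta_G)\to(G,\iota_G)$ is $n$-step semitopological, witnessed by a chain of topologies $\iota_G = \lambda_n \leq \lambda_{n-1} \leq \cdots \leq \lambda_1 \leq \lambda_0 = \delta_G$ such that each $id_G:(G,\lambda_{i-1})\to(G,\lambda_i)$ is semitopological. Setting $N_i := N_{\lambda_i}$, we have $N_0 = \{e_G\}$ and $N_n = G$, and Lemma \ref{G,N<N} yields $[G,N_i] \leq N_{i-1}$ for $i = 1,\dots,n$. I would then prove by induction on $k$ that $\gamma_{k+1}(G) \leq N_{n-k}$: the base $\gamma_1(G) = G = N_n$ is immediate, and the inductive step is
\[
\gamma_{k+2}(G) = [G,\gamma_{k+1}(G)] \leq [G,N_{n-k}] \leq N_{n-k-1}.
\]
Taking $k = n$ gives $\gamma_{n+1}(G) \leq N_0 = \{e_G\}$, so $G$ is nilpotent of class $\leq n$.

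For the \emph{sufficiency} direction, assume $G$ is nilpotent of class $\leq n$, so $Z_n(G) = G$ in the upper central series $\{e_G\} = Z_0(G) \leq Z_1(G) \leq \cdots \leq Z_n(G) = G$. Define the almost trivial topology $\lambda_i := \zeta_{Z_i(G)}$ for $i = 0,1,\dots,n$. Then $\lambda_0 = \zeta_{\{e_G\}} = \delta_G$, $\lambda_n = \zeta_G = \iota_G$, and the order-reversing character of the assignment $N \mapsto \zeta_N$ gives $\lambda_0 \geq \lambda_1 \geq \cdots \geq \lambda_n$. The defining property $Z_i(G)/Z_{i-1}(G) = Z(G/Z_{i-1}(G))$ of the upper central series is precisely the inclusion $[G,Z_i(G)] \leq Z_{i-1}(G)$, so Corollary \ref{almdiscrsemitop} guarantees that each $id_G:(G,\lambda_{i-1})\to(G,\lambda_i)$ is semitopological, witnessing that $id_G:(G,\delta_G)\to(G,\iota_G)$ is $n$-step semitopological.

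I expect no serious obstacle: the two key ingredients (Lemma \ref{G,N<N} and Corollary \ref{almdiscrsemitop}) match the two central series perfectly. The only subtlety is the bookkeeping of the index reversal between the decreasing chain of topologies and the increasing lower/upper central series, but the choice to match the $\lambda_i$ with the upper central series on the sufficiency side and to run an induction reading off the lower central series on the necessity side makes both directions clean.
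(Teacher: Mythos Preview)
Your proof is correct and follows essentially the same approach as the paper: both directions use the upper central series (with almost trivial topologies $\zeta_{Z_i(G)}$) for sufficiency and extract the lower central series containments for necessity. The only cosmetic difference is that you invoke Lemma~\ref{G,N<N} and Corollary~\ref{almdiscrsemitop} uniformly at every step (including the endpoints $\delta_G=\zeta_{\{e_G\}}$ and $\iota_G=\zeta_G$), whereas the paper unwinds the neighborhood argument from Theorem~\ref{semitop} directly and treats the endpoints separately via Theorem~\ref{discrete}; your packaging is a bit cleaner but the underlying argument is the same.
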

\begin{proof}
If $id_G:(G,\delta_G)\to(G,\iota_G)$ is $n$-step semitopological, then there exist $n-1$ group topologies $\lambda_{n-1}\leq\ldots\leq\lambda_1$ on $G$ such that $$id_G:(G,\delta_G)\to(G,\lambda_1),id_G:(G,\lambda_1)\to(G,\lambda_2),\ldots$$ $$\ldots,id_G:(G,\lambda_{n-2})\to(G,\lambda_{n-1}) ,id_G:(G,\lambda_{n-1})\to(G,\iota_G)$$ are semitopological. By Theorem \ref{ind}(b) $G'\subseteq V$ for every $V\in\mathcal V_{(G,\lambda_{n-1})}(e_G)$. Since $id_G:(G,\lambda_{n-2})\to(G,\lambda_{n-1})$ is semitopological, Theorem \ref{semitop} implies that for every $U\in\mathcal V_{(G,\lambda_{n-2})}(e_G)$ and for every $g\in G$ there exists $V_g\in\mathcal V_{(G,\lambda_{n-1})}(e_G)$ such that $[g,V_g]\subseteq U$. Consequently $[g,G']\subseteq U$ for every $U\in\mathcal V_{(G,\lambda_{n-2})}(e_G)$. Hence $\gamma_3(G)=[G,G']\subseteq U$ for every $U\in\mathcal V_{(G,\lambda_{n-2})}(e_G)$. Proceeding by induction we have that $\gamma_n(G)\subseteq U$ for every $U\in\mathcal V_{(G,\lambda_1)}(e_G)$. By Theorem \ref{discrete}(a) $c_G(g)$ is $\lambda_1$-open for every $g\in G$. Thus $\gamma_n(G)\subseteq Z(G)$ and this implies that $G$ is nilpotent of class $\leq n$ ($\gamma_{n+1}(G)=\{e_G\}$).

Conversely, if $G$ is nilpotent of class $\leq n$, consider on $G$ the group topologies $\zeta_{Z(G)},\zeta_{Z_2(G)},\ldots,\zeta_{Z_{n-1}(G)}$.
Then $id_G:(G,\delta_G)\to(G,\zeta_{Z(G)})$ is semitopological by Theorem \ref{discrete}(a) and $id_G:(G,\zeta_{Z_{n-1}(G)})\to(G,\iota_G)$ is semitopological because $G'\leq Z_{n-1}(G)$ since $G/Z_{n-1}(G)$ is abelian and applying Theorem \ref{ind}(b). For every $i=1,\ldots,n-1$, by Corollary \ref{almdiscrsemitop} $id_G:(G,\zeta_{Z_{i}(G)})\to(G,\zeta_{Z_{i+1}(G)})$ is semitopological if and only if $[G,Z_{i+1}(G)]\leq Z_{i}(G)$ and this holds true since $Z_{i+1}(G)/Z_{i}(G)=Z(G/Z_{i}(G))$.
\end{proof}

As a particular case of $n=2$ in this theorem, we find \cite[Example 12]{Ar}, which witnesses  that the composition of semitopological isomorphism is not semitopological in general. Indeed $id_G:(G,\delta_G)\to(G,\iota_G)$ is not semitopological, whenever $G$ is not abelian.

\medskip
For $n\in\mathbb N_+$, let $$n\text{-}\mathcal S:=\{f_n\circ\ldots\circ f_1:f_i\in\mathcal S\}.$$

Observe that $$\mathcal S=1\text{-}\mathcal S\subset 2\text{-}\mathcal S\subset\ldots\subset n\text{-}\mathcal S\subset {(n+1)}\text{-}\mathcal S\subset\ldots,$$ where all inclusions are proper by the previous theorem.

Define also $\infty\text{-}\mathcal S:=\bigcup_{n=1}^\infty n\text{-}\mathcal S$ and observe that it is closed under compositions.
Moreover $\infty\text{-}\mathcal S$ is closed also under taking subgroups, quotients and finite products, in the following sense:

\begin{lem}
Let $n\in\mathbb N_+$, let $G$ be a group and $\tau\geq\sigma$ group topologies on $G$ such that $id_G:(G,\tau)\to(G,\sigma)$ is $n$-step semitopological.
\begin{itemize}
	\item[(a)] If $A$ is a subgroup of $G$ then $id_G \restriction_A=id_A:A\to A$ is $n$-step semitopological.
	\item[(b)] If $A$ is a normal subgroup of $G$ then $id_{G/A}:(G/A,\tau_q)\to (G/A,\sigma_q)$ is $n$-step semitopological.
\end{itemize}
\end{lem}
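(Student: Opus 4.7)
The plan is to unravel the definition of $n$-step semitopological into a chain of $n$ ordinary semitopological isomorphisms, apply the known stability results (Theorem \ref{subgroupst}) step by step, and then reassemble the chain.

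By hypothesis there exist group topologies $\sigma\leq\lambda_{n-1}\leq\ldots\leq\lambda_1\leq\tau$ on $G$ such that, setting $\lambda_0:=\tau$ and $\lambda_n:=\sigma$, each of the $n$ maps $id_G:(G,\lambda_{i-1})\to(G,\lambda_i)$, for $i=1,\ldots,n$, is semitopological.

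For (a), I would restrict the whole chain to $A$. The induced topologies $\lambda_i\restriction_A$ on $A$ satisfy $\sigma\restriction_A\leq\lambda_{n-1}\restriction_A\leq\ldots\leq\lambda_1\restriction_A\leq\tau\restriction_A$, since restriction preserves inclusions of filters of neighborhoods. Theorem \ref{subgroupst}(a) applied to each of the $n$ steps yields that $id_A:(A,\lambda_{i-1}\restriction_A)\to(A,\lambda_i\restriction_A)$ is semitopological for every $i=1,\ldots,n$. Thus $\{\lambda_i\restriction_A\}_{i=1}^{n-1}$ is an intermediate chain witnessing that $id_A:(A,\tau\restriction_A)\to(A,\sigma\restriction_A)$ is $n$-step semitopological.

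For (b), the argument is completely analogous, using Theorem \ref{quotientst}(b) instead. The quotient topologies satisfy $(\sigma)_q\leq(\lambda_{n-1})_q\leq\ldots\leq(\lambda_1)_q\leq(\tau)_q$ on $G/A$, since the canonical projection takes finer topologies to finer quotient topologies. Applying Theorem \ref{quotientst}(b) to each step shows that $id_{G/A}:(G/A,(\lambda_{i-1})_q)\to(G/A,(\lambda_i)_q)$ is semitopological, and these intermediate topologies witness that $id_{G/A}:(G/A,\tau_q)\to(G/A,\sigma_q)$ is $n$-step semitopological.

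There is no real obstacle here: the proof is a straightforward bookkeeping argument that lifts the one-step stability properties along the defining chain. The only thing worth checking carefully is that the monotonicity $\sigma\leq\lambda_{n-1}\leq\ldots\leq\lambda_1\leq\tau$ is preserved by both operations (restriction and quotient), which is immediate from the definitions.
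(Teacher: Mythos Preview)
Your proof is correct and follows essentially the same approach as the paper: unravel the defining chain of semitopological isomorphisms, apply Theorem~\ref{subgroupst}(a) (respectively Theorem~\ref{quotientst}(b)) to each link, and reassemble. The paper's own proof is slightly terser---it does not pause to note that restriction and quotient preserve the ordering of the intermediate topologies---but the argument is the same.
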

\begin{proof}
(a) By hypothesis there exist $n-1$ group topologies $\sigma\leq\lambda_{n-1}\leq\ldots\leq\lambda_1\leq\tau$ on $G$ such that $$id_G:(G,\tau)\to(G,\lambda_1),id_G:(G,\lambda_1)\to(G,\lambda_2),\ldots,id_G:(G,\lambda_{n-1})\to(G,\sigma)$$ are semitopological. Theorem \ref{subgroupst}(a) implies that $$id_A:(A,\tau \restriction_A)\to(A,\lambda_1 \restriction_A),id_A:(A,\lambda_1 \restriction_A)\to(A,\lambda_2 \restriction_A),\ldots$$ $$\ldots,id_A:(A,\lambda_{n-1} \restriction_A)\to(A,\sigma \restriction_A)$$ are semitopological and so $id_A:(A,\tau \restriction_A)\to(A,\sigma \restriction_A)$ is $n$-step semitopological.

(b) Follows from Theorem \ref{quotientst}(b).
\end{proof}

The following lemma shows that for each $n\in\mathbb N_+$ the class $n\text{-}\mathcal S$ is closed under taking products. In particular it implies that $\infty\text{-}\mathcal S$ is closed under taking finite products.

\begin{lem}\label{prodstabnstep}
Let $n\in\mathbb N_+$, let $\{G_i:i\in I\}$ be a family of groups and $\{\tau_i:i\in I\}$, $\{\sigma_i:i\in I\}$ two families of group topologies such that $\sigma_i\leq\tau_i$ are group topologies on $G_i$ and $id_{G_i}:(G_i,\tau_i)\to(G_i,\sigma_i)$ is $n$-step semitopological for every $i\in I$. Then $\prod_{i\in I}id_{G_i}:\prod_{i\in I}(G_i,\tau_i)\to\prod_{i\in I}(G_i,\sigma_i)$ is $n$-step semitopological.
\end{lem}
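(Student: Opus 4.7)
The plan is to proceed by induction on $n$, using Theorem \ref{productst} at each inductive step. The base case $n=1$ is exactly the statement of Theorem \ref{productst} in the ``only if'' direction: if each $id_{G_i}:(G_i,\tau_i)\to(G_i,\sigma_i)$ is ($1$-step) semitopological, then so is the product map.

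For the inductive step, suppose the result holds for $n-1$ and that each $id_{G_i}:(G_i,\tau_i)\to(G_i,\sigma_i)$ is $n$-step semitopological. Then for every $i\in I$ there exist group topologies $\sigma_i\leq\lambda_{i,n-1}\leq\ldots\leq\lambda_{i,1}\leq\tau_i$ on $G_i$ witnessing the decomposition, that is, $id_{G_i}:(G_i,\lambda_{i,k-1})\to(G_i,\lambda_{i,k})$ is semitopological for $k=1,\ldots,n$, where by convention $\lambda_{i,0}=\tau_i$ and $\lambda_{i,n}=\sigma_i$. I would define on $G:=\prod_{i\in I}G_i$ the product topologies $\mu_k:=\prod_{i\in I}\lambda_{i,k}$ for $k=0,1,\ldots,n$, so that $\mu_0=\prod_{i\in I}\tau_i$ and $\mu_n=\prod_{i\in I}\sigma_i$. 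Since the pointwise inequalities $\lambda_{i,k-1}\geq\lambda_{i,k}$ hold on each factor, we obtain $\mu_0\geq\mu_1\geq\ldots\geq\mu_n$ on $G$.

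The key step is then to apply Theorem \ref{productst} (in the ``only if'' direction) to the family $\{id_{G_i}:(G_i,\lambda_{i,k-1})\to(G_i,\lambda_{i,k})\}_{i\in I}$ for each fixed $k\in\{1,\ldots,n\}$, obtaining that $id_G:(G,\mu_{k-1})\to(G,\mu_k)$ is semitopological. Concatenating these $n$ semitopological isomorphisms exhibits $\prod_{i\in I}id_{G_i}:(G,\mu_0)\to(G,\mu_n)$ as an $n$-step semitopological isomorphism, completing the induction.

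I don't foresee a genuine obstacle here; the proof is essentially bookkeeping, since Theorem \ref{productst} already does the real work (it equates semitopologicalness of a product map with the semitopologicalness of all factors). The only point to be attentive to is the definition of the witnessing chain on the product: one must take the product of the witnesses factor by factor rather than, say, an initial or supremum topology, so that applying Theorem \ref{productst} at each level is immediate. Once the chain $\mu_0\geq\mu_1\geq\ldots\geq\mu_n$ is in place, the conclusion is automatic.
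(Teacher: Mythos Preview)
Your argument is correct and is precisely the intended one: the paper's own proof consists of the single sentence ``It follows from Theorem \ref{productst},'' and what you have written is exactly the unpacking of that sentence---take the product of the witnessing chains and apply Theorem \ref{productst} level by level. (The induction framing is harmless but unnecessary, since you in fact construct all $n$ intermediate topologies at once.)
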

\begin{proof}
It follows from Theorem \ref{productst}.
\end{proof}

The following example shows that $\infty\text{-}\mathcal S$ is not closed under taking infinite direct products and answers negatively (b) of Problem C. In fact we construct a continuous isomorphism which is not composition of semitopological isomorphisms.

\begin{exmp}\label{C(b)}
For every $n\in\mathbb N_+$ let $G_n$ be a nilpotent group of class $n$. Then $\prod_{n=1}^\infty id_{G_n}:\prod_{n=1}^\infty(G_n,\delta_{G_n})\to \prod_{n=1}^\infty(G_n,\iota_{G_n})$ is $n$-step semitopological for no $n\in\mathbb N_+$. Indeed $id_{G_{n+1}}:(G_{n+1},\delta_{G_{n+1}})\to(G_{n+1},\iota_{G_{n+1}})$ is not $n$-step semitopological whenever $n\in\mathbb N_+$, in view of Theorem \ref{n-step_semitop}, because $G_{n+1}$ is not nilpotent of class $\leq n$.
\end{exmp}

The next example is another particular case in which we answer Problem C(a).

\begin{exmp}
Let $n\in \mathbb N_+$, let $G$ be a totally Markov group and $\tau,\sigma$ group topologies on $G$. Every group topology on $G$ is almost trivial by Proposition \ref{allalmdiscr}. Then $id_G:(G,\tau)\to(G,\sigma)$ is $n$-step semitopological if and only if $\underbrace{[G,[G,[...[G}_n,N_\sigma]]]]\leq N_\tau$.

\medskip
In fact, suppose that $id_G:(G,\tau)\to(G,\sigma)$ is $n$-step semitopological. Then there exist group topologies $\sigma\leq\lambda_{n-1},\leq\ldots,\leq\lambda_{1}\leq\tau$ on $G$ such that $$id_G:(G,\tau)\to(G,\lambda_1),id_G:(G,\lambda_1)\to(G,\lambda_2),\ldots$$
$$\ldots,id_G:(G,\lambda_{n-1})\to(G,\sigma)$$ are semitopological. By Corollary \ref{almdiscrsemitop} $$[G,N_\sigma]\subseteq N_{\lambda_1},[G,N_{\lambda_1}]\subseteq N_{\lambda_2},\ldots,[G,N_{\lambda_{n-1}}]\subseteq N_\tau$$ and hence $\underbrace{[G,[G,[...[G}_n,N_\sigma]]]]\leq N_\tau$.

Assume that $\underbrace{[G,[G,[...[G}_n,N_\sigma]]]]\leq N_\tau$. Let $$N_{\lambda_1}=[G,N_\sigma], N_{\lambda_2}=[G,N_{\lambda_1}],\ldots,N_{\lambda_{n-1}}=[G,N_{\lambda_{n-2}}].$$ By Corollary \ref{almdiscrsemitop} and our assumption $id_G:(G,\tau)\to(G,\lambda_1),id_G:(G,\lambda_1)\to (G,\lambda_2),\ldots,id_G:(G,\lambda_{n-1})\to(G,\sigma)$ are semitopological.
\end{exmp}

%
%

\end{document}